\newtheorem{thm}{Theorem}[section]
\newtheorem{lem}[thm]{Lemma}
\newtheorem{prop}[thm]{Proposition}
\newtheorem{cor}[thm]{Corollary}
\newtheorem*{thma}{Theorem A}
\newtheorem*{thmb}{Theorem B}
\newtheorem*{thmc}{Theorem C}
\theoremstyle{definition}
\newtheorem{defn}[thm]{Definition}
\newtheorem{exs}[thm]{Examples}
\newtheorem{rmk}[thm]{Remark}
\newtheorem{rmks}[thm]{Remarks}
\numberwithin{equation}{section}
\def\D{\mathbb D}
\def\N{\mathbb N}
\def\R{\mathbb R}
\def\Sp{\mathbb S} %sphere
\def\Z{\mathbb Z}
\def\calH{\mathcal{H}}
\def\calI{\mathcal{I}}
\def\calL{\mathcal{L}}
\def\calM{\mathcal{M}}
\def\calO{\mathcal{O}}
\def\calU{\mathcal{U}}
\def\asr{\mathrm{asr}}
\def\Diff{\mathrm{Diff}}
\def\fr{\mathrm{fr}}
\def\GL{\mathrm{GL}}
\def\id{\mathrm{id}}
\def\Ker{\mathrm{Ker}}
\def\Link{\mathrm{Link}}
\def\Mod{\mathrm{Mod}}
\def\Quad{\mathrm{Quad}}
\def\rk{\mathrm{rk}}
\def\sr{\mathrm{sr}}
\def\st{\mathrm{st}}
\def\tn{\mathrm{tn}}
\def\usr{\mathrm{usr}}
\newcommand{\gbar}{\overline{g}}
\newcommand{\Kbar}{\overline{K}}
\newcommand{\rkbar}{\overline{\rk}}
\newcommand{\eps}{\varepsilon}
\begin{document}

\title{Homological Stability of automorphism groups of quadratic modules and manifolds}
\author{Nina Friedrich}
\email{N.Friedrich@maths.cam.ac.uk}
\address{Centre for Mathematical Sciences\\
Wilberforce Road\\
Cambridge CB3 0WB\\
UK}

\begin{abstract}
    We prove homological stability for both general linear groups of modules over a~ring with finite stable rank and unitary groups of quadratic modules over a~ring with finite unitary stable rank. In particular, we do not assume the modules and quadratic modules to be well-behaved in any sense: for example, the quadratic form may be singular. This extends results by van der Kallen and Mirzaii--van der Kallen respectively. Combining these results with the machinery introduced by Galatius--Randal-Williams to prove homological stability for moduli spaces of simply-connected manifolds of dimension~$2n \geq 6$, we get an~extension of their result to the case of virtually polycyclic fundamental groups.
\end{abstract}

\maketitle

%%%%%%%%%%%%%%%%%%%%%%%%%%%%%%%%%%%%%%%%%%%%%%%%%%%%%%%%%%%%%%%%%%%%%%%%%%%%%%%%%%%%%%%%%%%%%%%%%%%%%%%%%%%%%%%%%%%%%%%%%%%%%%%%%%%%%%%%%%%%%%%%%%%%%%%%
\section{Introduction and Statement of Results}
%%%%%%%%%%%%%%%%%%%%%%%%%%%%%%%%%%%%%%%%%%%%%%%%%%%%%%%%%%%%%%%%%%%%%%%%%%%%%%%%%%%%%%%%%%%%%%%%%%%%%%%%%%%%%%%%%%%%%%%%%%%%%%%%%%%%%%%%%%%%%%%%%%%%%%%%

We say that the sequence $X_1 \overset{f_1}{\longrightarrow} X_2 \overset{f_2}{\longrightarrow} X_3 \overset{f_3}{\longrightarrow} \cdots$ of topological spaces satisfies homological stability if the induced maps $(f_k)_* \colon H_k(X_n) \longrightarrow H_k(X_{n+1})$ are isomorphisms for $k < An + B$ for some constants~$A$ and $B$. In most cases where homological stability is known it is extremely hard to compute any particular $H_k(X_n)$. However, there are several techniques to compute the stable homology groups $H_k(X_{\infty})$ and homological stability can therefore be used to give many potentially new homology groups.

%%%%%%%%%%%%%%%%%%%%%%%%%%%%%%%%%%%%%%%%%%%%%%%%%%%%%%%%%%%%%%%%%%%%%%%%%%%%%%%%%%%%%%%%%%%%%%%%%%%%%%%%%%%%%%%%%%%%%%%%%%%%%%%%%%%%%%%%%%%%%%%%%%%%%%%%
\subsection{General Linear Groups}
%%%%%%%%%%%%%%%%%%%%%%%%%%%%%%%%%%%%%%%%%%%%%%%%%%%%%%%%%%%%%%%%%%%%%%%%%%%%%%%%%%%%%%%%%%%%%%%%%%%%%%%%%%%%%%%%%%%%%%%%%%%%%%%%%%%%%%%%%%%%%%%%%%%%%%%%

In~\cite{vdK}, van der Kallen proves homological stability for the group $\GL_n(R)$ of $R$-module automorphisms of $R^n$. For the special case where $R$ is a~PID, Charney~\cite{CharneyDedekindDomain} had earlier shown homological stability. In the first part of this paper we consider the analogous homological stability problem for groups of automorphisms of general $R$-modules~$M$; we write $\GL(M)$ for these groups. In order to phrase our stability range we define the rank of an~$R$-module~$M$, $\rk(M)$, to be the biggest number~$n$ so that $R^n$ is a~direct summand of~$M$. The stability range then says that the rank of $M$ has to be big compared to the so-called stable rank of~$R$, $\sr(R)$. In particular, the stable rank of~$R$ needs to be finite which holds for example for Dedekind domains and more generally algebras that are finite as a~module over a~commutative Noetherian ring of finite Krull dimension.

\begin{thma}
    The map
        \[H_k(\GL(M); \Z) \rightarrow H_k(\GL(M \oplus R); \Z),\]
    induced by the inclusion $\GL(M) \hookrightarrow \GL(M \oplus R)$, is an~epimorphism for $k \leq \frac{\rk(M) - \sr(R)}{2}$ and an~isomorphism for $k \leq \frac{\rk(M) - \sr(R) - 1}{2}$.

    For the commutator subgroup $\GL(M)^{\prime}$ the map
        \[H_k(\GL(M)^{\prime}; \Z) \rightarrow H_k(\GL(M \oplus R)^{\prime}; \Z)\]
    is an~epimorphism for $k \leq \frac{\rk(M) - \sr(R) - 1}{3}$ and an~isomorphism for $k \leq \frac{\rk(M) - \sr(R) - 3}{3}$.
\end{thma}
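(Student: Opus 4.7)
My plan is to follow van der Kallen's strategy for $\GL_n(R)$, generalising it from the free case to an arbitrary module $M$ of finite rank. The heart of the argument is a highly connected semi-simplicial set $W(M)$ on which $\GL(M)$ acts with well-understood stabilizers; homological stability then falls out of the standard spectral-sequence bootstrap on the homological degree $k$.

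I would define $W(M)_p$ to be the set of ordered tuples $(v_0,\dots,v_p) \in M^{p+1}$ such that $Rv_0 \oplus \cdots \oplus Rv_p$ is a free direct summand of $M$ of rank $p+1$; face maps forget a coordinate and $\GL(M)$ acts diagonally. Choosing a complement $M'$, the stabilizer of such a simplex fits into an extension
\[
1 \longrightarrow \mathrm{Hom}(M', R^{p+1}) \longrightarrow \mathrm{Stab}_{\GL(M)}(v_0,\dots,v_p) \longrightarrow \GL(M') \longrightarrow 1,
\]
whose abelian kernel is harmless in a secondary spectral sequence. The crucial input is the \emph{Connectivity Lemma}: $W(M)$ is $\lfloor (\rk(M)-\sr(R)-1)/2 \rfloor$-connected. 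For $M = R^n$ this is van der Kallen's theorem on unimodular sequences; the extension to arbitrary $M$ is the principal technical step.

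The main obstacle is precisely this Connectivity Lemma for non-free $M$. Fixing a splitting $M = R^{\rk(M)} \oplus N$, a simplex of $W(M)$ may have entries with nontrivial $N$-component while still spanning a free summand of the requisite rank. I would address this by running van der Kallen's \emph{bad simplex} argument directly on $W(M)$: given a map from a sphere or disc into $W(M)$, I would analyse the simplices whose vertices have nontrivial $N$-component and use $\sr(R) \leq \rk(M)$ to transport them into the free summand without destroying the direct-summand condition. Verifying that this operation yields no further obstruction beyond the claimed range requires a careful skeletal induction, in the spirit of the more delicate arguments of Mirzaii--van der Kallen in the quadratic setting.

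Given the Connectivity Lemma, the augmented semi-simplicial set $W(M \oplus R)_\bullet \to \ast$ produces a spectral sequence converging to zero in the stable range, whose $E^1$-page is built from the homologies of the stabilizers above. Trivialising the abelian kernel in the relevant range reduces each $E^1_{p,q}$ term to the homology of $\GL$ of a module of strictly smaller rank, and the usual inductive comparison of the maps $\GL(M) \hookrightarrow \GL(M \oplus R)$ yields the claimed epi and iso ranges. For the commutator subgroup I would apply the Lyndon--Hochschild--Serre spectral sequence to the extension $1 \to \GL(M)^{\prime} \to \GL(M) \to \GL(M)^{\mathrm{ab}} \to 1$; since the abelianization stabilizes already at rank $\geq \sr(R)+1$ (being essentially $K_1(R)$), a comparison of these spectral sequences for $\GL(M)$ and $\GL(M \oplus R)$, fed by the stability result just proved, produces stability for $\GL(M)^{\prime}$ in the shallower $(\cdot)/3$ range, the loss of slope being the standard cost of passing to the commutator subgroup.
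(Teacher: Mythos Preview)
Your overall architecture matches the paper's: establish high connectivity of the semisimplicial set of unimodular sequences in $M$, then feed this into a spectral-sequence machine. The paper packages the second step via the Randal-Williams--Wahl axiomatic framework rather than running the spectral sequence by hand, but that is cosmetic; the real content is the connectivity theorem, and here your sketch has two genuine problems.

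First, a numerical error with consequences: the complex $W(M)$ you describe (equivalently the poset $\calO(M)\cap\calU(M^\infty)$ in the paper's notation) is in fact $(\rk(M)-\sr(R)-1)$-connected, not $\lfloor(\rk(M)-\sr(R)-1)/2\rfloor$-connected. The linear connectivity is exactly what the spectral sequence needs in order to output slope-$\tfrac12$ stability; with only half of it you would not reach the claimed range.

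Second, and more seriously, your proposed connectivity argument does not work as described. Declaring vertices with nontrivial $N$-component ``bad'' and ``transporting them into the free summand'' is not a move that exists: there is no element of $\GL(M)$ that kills the $N$-part of an arbitrary unimodular vector while preserving unimodularity. The paper's proof is organised quite differently. It works in $M^\infty=M\oplus R^\infty$ and introduces auxiliary posets $\calO(M\cup(M+e_1))\cap\calU(M^\infty)$ (and their $_v$-versions), which are one degree \emph{more} connected than the target poset. The engine of the induction is the following: given a unimodular $v_1\in M^\infty$, the stable range condition produces an $f\in\GL(M^\infty)$ fixing $M\cup(M+e_1)$ setwise such that the \emph{projection} of $f(v_1)$ onto the free summand $R^g\subseteq M$ is unimodular. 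One never moves $v_1$ into the free part; one only arranges that its free shadow is already unimodular, after which the extra basis vector $e_1$ serves as a witness that the relevant link is nonempty. The full proof is a four-statement interleaved induction (the two assertions of the theorem plus two auxiliary statements about the $M\cup(M+e_1)$ posets), each step driven by van der Kallen's Lemma~2.13.

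Two smaller points. Your identification of stabilizers presupposes that all complements $M'$ of a fixed free rank-$(p+1)$ summand of $M$ are isomorphic; this is a cancellation statement, which the paper deduces from the $0$-connectedness of the poset (Propositions~\ref{PropGLTransitivity} and~\ref{PropGLCancellation}) rather than assuming it. And for the commutator subgroup, your LHS idea is reasonable, but the assertion that $\GL(M)^{\mathrm{ab}}$ is ``essentially $K_1(R)$'' for general $M$ is itself a stability statement for $H_1$ that you have not yet proved; the paper avoids this circularity by invoking the Randal-Williams--Wahl theorem for commutator subgroups directly once the connectivity axiom is verified.
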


We emphasise that $M$ is allowed to be any module over~$R$. For example over the integers, $M$ could be $\Z/100\Z \oplus \Z^{100}$. We also get statements for polynomial and abelian coefficients. The full statement of our theorem is given in Theorem~\ref{ThmMainGL}.

This part of the paper can be seen as a~warm up for the heart of the algebraic part of this paper, which is homological stability for the automorphism groups of quadratic modules.

%%%%%%%%%%%%%%%%%%%%%%%%%%%%%%%%%%%%%%%%%%%%%%%%%%%%%%%%%%%%%%%%%%%%%%%%%%%%%%%%%%%%%%%%%%%%%%%%%%%%%%%%%%%%%%%%%%%%%%%%%%%%%%%%%%%%%%%%%%%%%%%%%%%%%%%%
\subsection{Unitary Groups}
%%%%%%%%%%%%%%%%%%%%%%%%%%%%%%%%%%%%%%%%%%%%%%%%%%%%%%%%%%%%%%%%%%%%%%%%%%%%%%%%%%%%%%%%%%%%%%%%%%%%%%%%%%%%%%%%%%%%%%%%%%%%%%%%%%%%%%%%%%%%%%%%%%%%%%%%

A~quadratic module is a~tuple $(M, \lambda, \mu)$ consisting of an~$R$-module~$M$, a~sesquilinear form~$\lambda \colon M \times M \rightarrow R$, and a~function~$\mu$ on~$M$ into a~quotient of~$R$, where $\lambda$ measures how far $\mu$ is from being linear. The precise definition is given in Section~\ref{SecCpxU}.
The basic example of a~quadratic module is the \textit{hyperbolic module}~$H$, which is given by
    \[\left( R^2 \text{ with basis } e, f; \begin{pmatrix} 0 & 1 \\ \eps & 0 \end{pmatrix}; \mu \text{ determined by } \mu(e) = \mu(f) = 0 \right).\]
For a~quadratic module~$M$ we write $U(M)$ for its unitary group, i.e.\ the group of all automorphisms that fix the quadratic structure on $M$. Mirzaii--van der Kallen~\cite{MvdK} have shown homological stability for the unitary groups~$U(H^n)$ and our Theorem~B below extends this to general quadratic modules.

We write $g(M)$ for the Witt index of~$M$ as a~quadratic module, which is defined to be the maximal number~$n$ so that $H^n$ is a~direct summand of $M$. In our stability range we use the notion of unitary stable rank of~$R$, $\usr(R)$, which is at least as big as the stable rank and also requires a~certain transitivity condition on unimodular vectors of fixed length. Analogously to Theorem~A the Witt index of~$M$ has to be big in relation to the unitary stable rank of~$R$. In particular, $\usr(R)$ needs to be finite which is the case for both examples given above of rings with finite stable rank.

\begin{thmb}
    The map
        \[H_k(U(M); \Z) \rightarrow H_k(U(M \oplus H); \Z)\]
    is an~epimorphism for $k \leq \frac{g(M) - \usr(R) - 1}{2}$ and an~isomorphism for $k < \frac{g(M) - \usr(R) - 2}{2}$.

    For the commutator subgroup $U(M)^{\prime}$ the map
        \[H_k(U(M)^{\prime}; \Z) \rightarrow H_k(U(M \oplus H)^{\prime}; \Z)\]
    is an~epimorphism for $k \leq \frac{g(M) - \usr(R) - 1}{2}$ and an~isomorphism for $k < \frac{g(M) - \usr(R) - 3}{2}$.
\end{thmb}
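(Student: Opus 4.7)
The plan is to follow the Quillen-style method used in this context by van der Kallen and refined by Mirzaii--van der Kallen. For a~quadratic module~$N$ I would construct a~semi-simplicial set $W_\bullet(N)$ whose $p$-simplices are ordered tuples $(v_0, \dots, v_p)$ of pairwise orthogonal unimodular isotropic vectors in~$N$ spanning a~free isotropic summand that admits a~hyperbolic complement in~$N$, i.e.\ extending to a~hyperbolic basis of some $H^{p+1} \subset N$ appearing as a~direct summand. The group $U(N)$ acts on $W_\bullet(N)$ transitively on $p$-simplices, and the stabiliser of a~standard simplex is isomorphic to the unitary group of the orthogonal complement of the chosen hyperbolic summand. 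This is the quadratic analogue of the complex used in the proof of Theorem~A.

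The main technical step, and the main obstacle I anticipate, is to show that $|W_\bullet(N)|$ is roughly $\frac{g(N) - \usr(R) - c}{2}$-connected for a~small constant~$c$. For $N = H^n$ this is the connectivity theorem of Mirzaii--van der Kallen, whereas for arbitrary~$N$ one must handle the possibly ill-behaved module summand complementary to the hyperbolic part. I would adapt the bad-simplex and link induction of~\cite{MvdK}: write $N = H^{g(N)} \oplus N_0$, introduce a~suitable coloured or filtered variant of $W_\bullet$ distinguishing vertices according to their projection to~$N_0$, and invoke the hypothesis that $R$ has finite unitary stable rank to promote partial isotropic tuples to hyperbolic ones. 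The subtlety is that $N_0$ can be singular and need admit no hyperbolic summands of its own, so the induction must be driven entirely by the hyperbolic part while still producing vertices with essentially arbitrary projection to~$N_0$; this is the analogue for quadratic modules of the arbitrary-module refinement needed for Theorem~A.

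With the connectivity estimate in hand, the stability statement follows from the standard spectral sequence associated to the action of $U(M \oplus H)$ on the augmented complex $W_\bullet(M \oplus H) \to \mathrm{pt}$, whose $E^1$-term is
\[E^1_{p,q} = \bigoplus_{[\sigma_p]} H_q\bigl(\mathrm{Stab}(\sigma_p); \Z\bigr)\]
and which abuts to zero in the range given by the connectivity of~$|W_\bullet|$. Transitivity on $p$-simplices collapses each column to the homology of a~unitary group of a~quadratic module of strictly smaller Witt index, and an~induction on $g(M)$ comparing the $p = -1$ and $p = 0$ columns yields surjectivity of the stabilisation map for $k \leq \frac{g(M) - \usr(R) - 1}{2}$ and injectivity for $k < \frac{g(M) - \usr(R) - 2}{2}$; the one-step gap between these ranges is intrinsic to the spectral sequence.

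For the commutator subgroup $U(M)^\prime$ I would use the Hochschild--Serre spectral sequence of $1 \to U(M)^\prime \to U(M) \to U(M)^{\mathrm{ab}} \to 1$ together with a~stabilisation result for the abelianisation, which one expects to stabilise much earlier than~$U(M)$ itself. Comparing these spectral sequences for $M$ and $M \oplus H$ via a~five-lemma argument transfers the stability range to~$U(M)^\prime$ at the cost of a~one-step shift in the isomorphism bound, giving the claimed ranges. The only real obstacle in the whole argument is the connectivity of~$|W_\bullet|$ for a~general quadratic module; once that is established, the remaining arguments are essentially formal.
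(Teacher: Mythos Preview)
Your broad strategy is exactly the paper's: prove a connectivity estimate for a complex of (hyperbolic) isotropic unimodular sequences in a general quadratic module, then feed it into a Quillen-type argument. Two differences are worth flagging. First, the paper does not run the spectral sequence by hand; it invokes the Randal-Williams--Wahl framework~\cite{R-WWahl}, so once the complex $\calH\calU(M)$ (the poset of hyperbolic $\lambda$-unimodular sequences) is shown to be $\bigl\lfloor\frac{g(M)-\usr(R)-3}{2}\bigr\rfloor$-connected, both the $U(M)$ and $U(M)'$ statements fall out of their Theorems~3.1 and~3.4 directly, without your Hochschild--Serre comparison.

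Second, and more importantly, your sketch ``adapt the bad-simplex and link induction of~\cite{MvdK}\ldots\ invoke finite unitary stable rank to promote partial isotropic tuples'' hides precisely the step where the paper does something new. The Mirzaii--van der Kallen induction needs, as input, a lemma saying that any $\lambda$-unimodular $k$-tuple in $P\oplus H^g$ can be moved by an element of $U(P\oplus H^g)$ so that its projection to a fixed $H^k$ is $\lambda$-unimodular (Lemma~\ref{Lem6.6MvdK}). For $P=0$ this is~\cite[Lemma~6.6]{MvdK}, but for general (possibly singular) $P$ that proof breaks down. The paper's replacement is a Vaserstein-style theory of ``$n\times k$-blocks for~$P$'' (matrices with an extra row of antilinear maps $\lambda(-,p_i)$), proving an analogue of $(\mathrm{S}^k_n)$ for such blocks and using it to build the required unitary automorphism out of orthogonal transvections. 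A second subtlety you should anticipate is that for singular modules $\lambda$-unimodularity is strictly stronger than unimodularity and is not stable under concatenation; the paper isolates the needed closure properties in Lemma~\ref{LemLambdaUnimodularProperties}, and these are used repeatedly in the induction. With those two ingredients in place, the rest of your plan goes through essentially as you describe.
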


We again emphasise that $M$ can be an~arbitrary quadratic module -- in particular, it can be singular. As in the case for general linear groups, we get an~analogous statement for abelian and polynomial coefficients. The full statement is given in Theorem~\ref{ThmMainU}.

To show homological stability for both the automorphism groups of modules and quadratic modules we use the machinery developed in Randal-Williams--Wahl~\cite{R-WWahl}. The actual homological stability results are straightforward applications of that paper assuming that a~certain semisimplicial set is highly connected. Showing that this assumption is indeed satisfied is the main goal in Chapters~\ref{ChHomStabGL} and \ref{ChHomStabU}.

%%%%%%%%%%%%%%%%%%%%%%%%%%%%%%%%%%%%%%%%%%%%%%%%%%%%%%%%%%%%%%%%%%%%%%%%%%%%%%%%%%%%%%%%%%%%%%%%%%%%%%%%%%%%%%%%%%%%%%%%%%%%%%%%%%%%%%%%%%%%%%%%%%%%%%%%
\subsection{Moduli Spaces of Manifolds}
%%%%%%%%%%%%%%%%%%%%%%%%%%%%%%%%%%%%%%%%%%%%%%%%%%%%%%%%%%%%%%%%%%%%%%%%%%%%%%%%%%%%%%%%%%%%%%%%%%%%%%%%%%%%%%%%%%%%%%%%%%%%%%%%%%%%%%%%%%%%%%%%%%%%%%%%

Our theorem in the unitary case can also be used to extend the homological stability result for moduli spaces of simply-connected manifolds of dimension~$2n \geq 6$ by Galatius--Randal-Williams~\cite{GR-WStabNew} to certain non-simply-connected manifolds.

For a~compact connected smooth manifold~$W$ of dimension~$2n$ we write $\Diff_{\partial}(W)$ for the topological group of all diffeomorphisms of~$W$ that restrict to the identity near the boundary, and call its classifying space $B\Diff_{\partial}(W)$ the \textit{moduli space of manifolds of type~$W$}. As in the algebraic settings described previously there is a~notion of rank: Define the \textit{genus} of~$W$ as
    \[g(W) := \sup\{g \in \N \ | \ \text{there are $g$ disjoint embedding of~$\Sp^n \times \Sp^n \setminus \mathrm{int}(\D^{2n})$ into } W\}.\]
Let $S$ denote the manifold $\left([0, 1] \times \partial W \right) \# \left(\Sp^n \times \Sp^n\right)$. We get an~inclusion
    \[\Diff_{\partial}(W) \hookrightarrow \Diff_{\partial}(W \cup_{\partial W} S)\]
by extending diffeomorphisms by the identity on~$S$. This gluing map then has an~induced map on classifying spaces which we denote by~$s$. Galatius--Randal-Williams have shown that for simply-connected manifolds of dimension $2n \geq 6$ the induced map
    \[s_* \colon H_k(B\Diff_{\partial}(W)) \longrightarrow H_k(B\Diff_{\partial}(W \cup_{\partial W} S))\]
is an~epimorphism for $k \leq \frac{g(W) - 1}{2}$ and an~isomorphism for $k \leq \frac{g(W) - 3}{2}$. The following extends this result to certain non-simply-connected manifolds.

\begin{thmc}
    Let $W$ be a~compact connected manifold of dimension $2n \geq 6$. Then the map
        \[s_* \colon H_k(B\Diff_{\partial}(W)) \longrightarrow H_k(B\Diff_{\partial}(W \cup_{\partial W} S))\]
    is an~epimorphism for $k \leq \frac{g(W) - \usr(\Z[\pi_1(W)])}{2}$ and an~isomorphism for $k \leq \frac{g(W) - \usr(\Z[\pi_1(W)]) - 2}{2}$.
\end{thmc}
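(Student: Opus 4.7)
The plan is to run the Galatius--Randal-Williams machinery from \cite{GR-WStabNew} essentially unchanged, replacing only the algebraic connectivity input -- which in the simply-connected case is governed by the stable rank of $\Z$ -- with the stronger connectivity statement for arbitrary quadratic modules over arbitrary rings that underlies Theorem~B. The dimension hypothesis $2n \geq 6$ will feed into the geometry-to-algebra comparison (via the equivariant Whitney trick), while $\usr(\Z[\pi_1(W)])$ enters as the resulting algebraic stability range.

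The starting point is the GRW resolution of $B\Diff_\partial(W)$ by a semi-simplicial space whose $p$-simplices parameterise $(p+1)$-tuples of disjoint embeddings of $\Sp^n \times \Sp^n \setminus \mathrm{int}(\D^{2n})$ into $W$, together with the appropriate tangential/orientation data. The associated spectral sequence reduces homological stability of the gluing map $s_*$ to a high-connectivity estimate for this semi-simplicial space.

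To obtain such an estimate, I would carry out the geometry-to-algebra comparison in the non-simply-connected setting. The correct algebraic object is the $\Z[\pi_1(W)]$-quadratic module $(H_n(\widetilde{W}; \Z), \lambda, \mu)$, where $\widetilde{W}$ is the universal cover, $\lambda$ is the equivariant intersection form, and $\mu$ is the self-intersection/quadratic refinement. In dimension $2n \geq 6$ the equivariant Whitney trick allows one to realise any algebraic configuration of hyperbolic pairs in this quadratic module by disjointly embedded copies of $\Sp^n \times \Sp^n \setminus \mathrm{int}(\D^{2n})$ in $W$. This bounds the connectivity of the geometric resolution below by the connectivity of the simplicial complex of hyperbolic summands of the above quadratic module, whose Witt index is precisely $g(W)$.

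Finally, the connectivity of this algebraic complex over $\Z[\pi_1(W)]$ is provided by the main technical result of Chapter~\ref{ChHomStabU}, i.e.\ the connectivity input behind Theorem~B applied to the possibly singular quadratic module $H_n(\widetilde{W}; \Z)$ over $\Z[\pi_1(W)]$. Feeding this into the spectral sequence produces the claimed ranges. The main obstacle, and the step requiring the most care, is the geometry-to-algebra comparison: one has to verify that the equivariant Whitney trick in dimension $\geq 6$ produces not merely isolated hyperbolic summands but full disjoint configurations realising arbitrary $p$-simplices of the algebraic complex, and that nothing in the GRW argument secretly used simple-connectivity when lifting geometric operations to the universal cover. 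The pay-off is that, because Theorem~B assumes nothing about the quadratic module, we impose no hypothesis on the middle-dimensional homology of $W$ beyond what is automatic, and virtual polycyclicity of $\pi_1(W)$ is needed only to guarantee that $\usr(\Z[\pi_1(W)])$ is finite.
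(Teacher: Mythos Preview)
Your overall strategy matches the paper's exactly: one feeds the connectivity of $\calH\calU(-)$ established in Chapter~\ref{ChHomStabU} into the Galatius--Randal-Williams resolution, using the equivariant Whitney trick (available because $2n \geq 6$) for the geometry-to-algebra comparison, and the spectral-sequence argument of \cite{GR-WStabNew} goes through verbatim once the connectivity of the complexes $|K^{\delta}_{\bullet}(W)|$, $|K_{\bullet}(W)|$, $|\Kbar_{\bullet}(W)|$ is established in the required range.

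There is, however, a genuine issue with your choice of quadratic module. You propose $(H_n(\widetilde{W};\Z),\lambda,\mu)$, but the self-intersection form~$\mu$ is \emph{not} well-defined on homology classes: it is an invariant of a regular homotopy class of immersions, and two immersions representing the same homology class can have different~$\mu$. More seriously, the lifting step---realising an algebraic hyperbolic pair by an embedded $W_{1,1}$---requires that every element of the module be represented by an immersed $n$-sphere with trivial normal bundle, and an arbitrary class in $H_n(\widetilde{W};\Z)$ need not even be spherical. The paper therefore works instead with the $\Z[\pi_1(W)]$-module $\calI^{\tn}_n(W)$ of regular homotopy classes of immersions $\Sp^n \looparrowright W$ with trivial normal bundle, equipped with a path to the basepoint (Definition~\ref{DefCalIfrLambdaMu}, following Wall). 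On this module both $\lambda$ and $\mu$ are well-defined, every element is by construction geometrically realised, and one still has $\gbar(\calI^{\tn}_n(W)) \geq \gbar(W) \geq g(W)$, which is all the comparison needs. With this correction your outline is exactly the paper's proof.
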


For a~virtually polycyclic fundamental group, e.g.\ a~finitely generated abelian group, the unitary stable rank of its group ring is known to be finite by Crowley-Sixt~\cite{CrowleySixt}. Combining Theorem~C with \cite[Cor.~1.9]{GR-WStab2} yields a~computation of~$H_k(B\Diff_{\partial}(W))$ in the stable range.

\subsection*{Acknowledgements}

These results will form part of my Cambridge PhD thesis. I am grateful to my supervisor Oscar Randal-Williams for many interesting and inspiring conversations and much helpful advice. I would like to thank the anonymous referee for pointing out a~gap in an~earlier version of this paper and their valuable input towards solving this. I was partially supported by the ``Studienstiftung des deutschen Volkes'' and by the EPSRC.

%%%%%%%%%%%%%%%%%%%%%%%%%%%%%%%%%%%%%%%%%%%%%%%%%%%%%%%%%%%%%%%%%%%%%%%%%%%%%%%%%%%%%%%%%%%%%%%%%%%%%%%%%%%%%%%%%%%%%%%%%%%%%%%%%%%%%%%%%%%%%%%%%%%%%%%%
\section{Homological Stability for General Linear Groups} \label{ChHomStabGL}
%%%%%%%%%%%%%%%%%%%%%%%%%%%%%%%%%%%%%%%%%%%%%%%%%%%%%%%%%%%%%%%%%%%%%%%%%%%%%%%%%%%%%%%%%%%%%%%%%%%%%%%%%%%%%%%%%%%%%%%%%%%%%%%%%%%%%%%%%%%%%%%%%%%%%%%%

This chapter treats the case of automorphism groups of modules. For the case of modules of the form $R^n$ for some ring~$R$ there are several results available already, e.g.\ results by Charney~\cite{CharneyDedekindDomain} for $R$ a~Dedekind domain and by van der Kallen~\cite{vdK} for $R$ with finite stable rank.

We consider the case of general modules over a~ring with finite stable rank. The approach we use to show homological stability is what has become the standard strategy of proving results in this area. It has been introduced by Maazen~\cite{Maazen} and shortly afterwards used in various contexts by Charney~\cite{CharneyDedekindDomain}, Dwyer~\cite{DwyerTwistedHomStab}, van der Kallen~\cite{vdK}, and Vogtmann~\cite{Vogtmann}. For us it is convenient to use the formulation in Randal-Williams--Wahl~\cite{R-WWahl}. This mainly involves showing the high connectivity of a~certain semisimplicial set. We start by generalising a~complex introduced by van der Kallen and show its high connectivity. Even though this complex is not exactly the one needed for the machinery of Randal-Williams--Wahl, it is good enough to deduce the high connectivity of that semisimplicial set. We can then immediately extract a~homological stability result for various coefficients systems.

%%%%%%%%%%%%%%%%%%%%%%%%%%%%%%%%%%%%%%%%%%%%%%%%%%%%%%%%%%%%%%%%%%%%%%%%%%%%%%%%%%%%%%%%%%%%%%%%%%%%%%%%%%%%%%%%%%%%%%%%%%%%%%%%%%%%%%%%%%%%%%%%%%%%%%%%
\subsection{The Complex and its Connectivity} \label{SecCpxGL}
%%%%%%%%%%%%%%%%%%%%%%%%%%%%%%%%%%%%%%%%%%%%%%%%%%%%%%%%%%%%%%%%%%%%%%%%%%%%%%%%%%%%%%%%%%%%%%%%%%%%%%%%%%%%%%%%%%%%%%%%%%%%%%%%%%%%%%%%%%%%%%%%%%%%%%%%

Following~\cite{vdK}, for a~set~$V$ we define $\calO(V)$ to be the poset of ordered sequences of distinct elements in~$V$ of length at least one. The partial ordering on $\calO(V)$ is given by refinement, i.e.\ we write $(w_1, \ldots, w_m) \leq (v_1, \ldots, v_n)$ if there is a~strictly increasing map $\phi \colon \{1, \ldots, m\} \rightarrow \{1, \ldots, n\}$ such that $w_i = v_{\phi(i)}$. We say that $F \subseteq \calO(V)$ satisfies the \textit{chain condition} if for every element $(v_1, \ldots, v_n) \in F$ and every $(w_1, \ldots, w_m) \leq (v_1, \ldots, v_n)$ we also have $(w_1, \ldots, w_m) \in F$. For $v = (v_1, \ldots, v_n) \in F$, we write $F_v$ for the set of all sequences $(w_1, \ldots, w_m) \in F$ such that $(w_1, \ldots, w_m, v_1, \ldots, v_n) \in F$. Note that if $F$ satisfies the chain condition and $v, w \in F$ then $(F_v)_w = F_{vw}$. We write $F_{\leq k}$ for the subset of~$F$ containing all sequences of length $\leq k$.

We write $\GL(M)$ for the group of automorphisms of general $R$-modules~$M$. A~sequence $(v_1, \ldots, v_n)$ of elements in~$M$ is called \textit{unimodular} if there are $R$-module homomorphisms
    \[f_1, \ldots, f_n \colon R \rightarrow M \text{ and } \phi_1, \ldots, \phi_n \colon M \rightarrow R\]
such that $f_i(1) = v_i$ and $\phi_j \circ f_i = \delta_{i, j} \cdot \mathds{1}_R$. An~element $v \in M$ is called \textit{unimodular} if it is unimodular as a~sequence in~$M$ of length~$1$. The condition $\phi_j \circ f_i = \delta_{i, j} \cdot \mathds{1}_R$ holds if and only if the matrix $\left(\phi_j \circ f_i (1) \right)_{i, j}$ is the identity matrix. In fact, for a~sequence to be unimodular it is enough to find $\tilde \phi_1, \ldots, \tilde \phi_n$ so that the matrix $\left(\tilde \phi_j \circ f_i (1) \right)_{i, j}$ is invertible.

\begin{lem} \label{LemUnimodularInvertible}
    Given a~sequence $(v_1, \ldots, v_n)$ in~$M$ and $R$-module homomorphisms
        \[f_1, \ldots, f_n \colon R \rightarrow M \text{ and } \tilde \phi_1, \ldots, \tilde \phi_n \colon M \rightarrow R\]
    so that $f_i(1) = v_i$  and the matrix $\left(\tilde \phi_j \circ f_i (1) \right)_{i, j}$ is invertible. Then $(v_1, \ldots, v_n)$ is already unimodular.
\end{lem}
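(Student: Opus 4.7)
The plan is to produce the required $\phi_1, \dots, \phi_n$ as explicit $R$-linear combinations of the given $\tilde\phi_1, \dots, \tilde\phi_n$, using the inverse of the matrix $A := (\tilde\phi_j \circ f_i(1))_{i,j}$ to ``correct'' them. Concretely, let $B = (B_{ij})$ denote the inverse of $A$ over $R$, so that $\sum_k \tilde\phi_k(v_i)\, B_{kj} = \delta_{ij}$ for all $i,j$. Define
\[
\phi_j := \sum_{k=1}^n B_{kj}\, \tilde\phi_k \colon M \longrightarrow R.
\]
Each $\phi_j$ is an $R$-module homomorphism since it is an $R$-linear combination of such.

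The verification is then a one-line computation:
\[
\phi_j \circ f_i(1) = \sum_{k=1}^n B_{kj}\, \tilde\phi_k(v_i) = \delta_{ij},
\]
which by $R$-linearity gives $\phi_j \circ f_i = \delta_{ij}\cdot \mathds{1}_R$. Together with the $f_i$ already furnished by hypothesis, this exhibits $(v_1,\dots,v_n)$ as unimodular in the sense defined in the paper.

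There is essentially no obstacle here; the content of the lemma is the observation already flagged in the paragraph preceding it, namely that unimodularity depends only on being able to find \emph{some} $\tilde\phi_i$ with $(\tilde\phi_j \circ f_i(1))_{i,j}$ invertible, because one may then ``change basis'' on the target side by applying $A^{-1}$. The only point worth being careful about is the index conventions (left vs.\ right inverse, row vs.\ column): since $A \in \mathrm{GL}_n(R)$ has a two-sided inverse, this causes no trouble, but one must pick the convention consistently so that the matrix product collapses to $\delta_{ij}$ as above.
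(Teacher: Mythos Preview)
Your proof is correct and follows exactly the same approach as the paper: both define the corrected maps $\phi_j$ by composing $\tilde\phi_1 \oplus \cdots \oplus \tilde\phi_n \colon M \to R^n$ with multiplication by $A^{-1}$, you writing this out in components while the paper writes it as a single composition $M \xrightarrow{\tilde\phi_1 \oplus \cdots \oplus \tilde\phi_n} R^n \xrightarrow{\cdot A^{-1}} R^n$. Your explicit remark about being consistent with left/right conventions is apt, since $R$ is not assumed commutative.
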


\begin{proof}
    Let $A^{-1}$ denote the inverse of the matrix $\left(\tilde \phi_j \circ f_i (1) \right)_{i, j}$. We define $R$-module homomorphisms $\phi_j \colon M \rightarrow R$ as follows:
        \[\phi_1 \oplus \cdots \oplus \phi_n \colon M \xrightarrow{\tilde \phi_1 \oplus \cdots \oplus \tilde \phi_n} R^n \xrightarrow{\cdot A^{-1}} R^n,\]
    where $\phi_j(m)$ is the $j$-th entry of the vector $\phi_1 \oplus \cdots \oplus \phi_n (m)$. By construction we have $\phi_j (v_i) = \delta_{i, j}$ and therefore the sequence $(v_1, \ldots, v_n)$ is unimodular.
\end{proof}

Let $R^{\infty}$ denote the free $R$-module with basis $e_1, e_2, \ldots$ and let $M^{\infty}$ denote the $R$-module $M \oplus R^{\infty}$. Then we write $\calU(M)$ for the subposet of~$\calO(M)$ consisting of unimodular sequences in~$M$. Note that for $(v_1, \ldots, v_n) \in M$ it is the same to say the sequence is unimodular in M or it is unimodular in $M \oplus R^{\infty}$.

\begin{defn} \label{DefStableRank}
A~ring~$R$ satisfies the \textit{stable range condition}~$(\mathrm{S}_n)$ if for every unimodular vector $(r_1, \ldots, r_{n+1}) \in R^{n+1}$ there are $t_1, \ldots, t_n \in R$ such that the vector $(r_1 + t_1 r_{n+1}, \ldots, r_n + t_n r_{n+1}) \in R^n$ is unimodular. If~$n$ is the smallest such number we say~$R$ has \textit{stable rank}~$n$, $\sr(R) = n$ and it has $\sr(R)= \infty$ if such an~$n$ does not exist.
\end{defn}

Note that the stable range in the sense of Bass~\cite{Bass}, $(\mathrm{SR}_n)$, is the same as our stable range condition~$(\mathrm{S}_{n-1})$. The absolute stable rank of a~ring~$R$, $\asr(R)$, as defined by Magurn--van der Kallen--Vaserstein in~\cite{MvdKV} is an~upper bound for the stable rank, i.e.\ $\sr(R) \leq \asr(R)$ (\cite[Lemma~1.2]{MvdKV}). In the following we give some of the well-known examples of rings and their stable ranks.

\begin{exs} \label{ExsSr}
    \leavevmode
    \begin{enumerate}
        \item A~commutative Noetherian ring~$R$ of finite Krull dimension $d$ satisfies $\sr(R) \leq d + 1$. In particular, if $R$ is a~Dedekind domain then $\sr(R) \leq 2$ (\cite[4.1.11]{HahnOMeara}) and for a~field~$k$, the polynomial ring $K = k[t_1, \ldots, t_n]$ satisfies $\sr(K) \leq n + 1$ (\cite[Thm.~8]{VasersteinDim}).
        \item More generally, any $R$-algebra~$A$ that is finitely generated as an~$R$-module satisfies $\sr(A) \leq d + 1$, for $R$ again a~commutative Noetherian ring of finite Krull dimension~$d$. \cite[Thm.~3.1]{MvdKV} or \cite[4.1.15]{HahnOMeara}
        \item Recall that a~ring~$R$ is called~\textit{semi-local} if $R/J(R)$ is a~left Artinian ring, for $J(R)$ the Jacobson radical of~$R$. A~semi-local ring satisfies $\sr(R) =  1$. \cite[4.1.17]{HahnOMeara}
        \item Recall that a~group~$G$ is called \textit{virtually polycyclic} if there is a~sequence of normal subgroups
                \[G = G_0 \triangleright G_1 \triangleright \ldots \triangleright G_{n-1} \triangleright G_n = 0\]
            such that each quotient $G_i/G_{i+1}$ is cyclic or finite. Its \textit{Hirsch number}~$h(G)$ is the number of infinite cyclic factors. For a~virtually polycyclic group~$G$ we have $\sr(\Z[G]) \leq h(G) + 2$. \cite[Thm.~7.3]{CrowleySixt}
    \end{enumerate}
\end{exs}

For an~$R$-module~$M$ we define the \textit{rank} of~$M$ as
    \[\rk(M) := \sup \{n \in \N \ | \ \text{there is an~$R$-module $M^{\prime}$ such that } M \cong R^n \oplus M^{\prime} \}.\]
Using this notion we can phrase the following theorem. Here and in the following, we use the convention that the condition of a~space to be $n$-connected for $n \leq -2$ (and so in particular for $n = -\infty$) is vacuous.

\begin{thm} \label{ThmGLConnectivity}
    \leavevmode
    \begin{enumerate}
        \item $\calO(M) \cap \calU(M^{\infty})$ is $(\rk(M) - \sr(R) - 1)$-connected,
        \item $\calO(M) \cap \calU(M^{\infty})_{(v_1, \ldots, v_k)}$ is $(\rk(M) - \sr(R) - k - 1)$-connected for $(v_1, \ldots, v_k) \in \calU(M^{\infty})$.
    \end{enumerate}
\end{thm}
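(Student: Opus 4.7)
My plan is to prove parts (i) and (ii) by simultaneous induction on $\rk(M)$, with $\sr(R)$ fixed and finite; the base cases where $\rk(M) - \sr(R) - 1 \leq -2$ are vacuous by the stated convention. Note that (i) is formally the $k = 0$ case of (ii), but the reduction used for $k \geq 1$ fails when $k = 0$, so the two are handled separately: for fixed $\rk(M) = n$ I would first prove (ii) for $k \geq 1$ (invoking the inductive hypothesis on smaller rank), and then prove (i) for $M$ directly.

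For (ii) with $k \geq 1$, the strategy is to reduce to (i) for a module of strictly smaller rank. Given $(v_1, \ldots, v_k) \in \calU(M^{\infty})$, Lemma~\ref{LemUnimodularInvertible} together with the unimodularity data yield a splitting $M^{\infty} \cong \langle v_1, \ldots, v_k \rangle \oplus N \cong R^k \oplus N$, with associated projection $\pi \colon M^{\infty} \twoheadrightarrow N$. Set $M^{\prime} := \pi(M) \subseteq N$; choosing a free summand $R^{\rk(M)} \subseteq M$ and tracking how it projects shows $\rk(M^{\prime}) \geq \rk(M) - k$, and in the main case one has $N \cong (M^{\prime})^{\infty}$. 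A short computation using Lemma~\ref{LemUnimodularInvertible} verifies that
\[
(w_1, \ldots, w_m) \in \calO(M) \cap \calU(M^{\infty})_{(v_1, \ldots, v_k)} \iff (\pi(w_1), \ldots, \pi(w_m)) \text{ is unimodular in } M^{\prime}.
\]
I would then argue that the resulting poset map $(w_i) \mapsto (\pi(w_i))$ to $\calO(M^{\prime}) \cap \calU((M^{\prime})^{\infty})$ is a weak equivalence, e.g.\ via Quillen's Theorem~A: the fibres are parametrised by choices of $R^k$-components of the $w_i$'s, so they are products of affine torsors (contractible, up to the discrete distinctness condition in $M$). Applying part (i) of the inductive hypothesis to $M^{\prime}$ then furnishes the required $(\rk(M) - \sr(R) - k - 1)$-connectivity.

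For (i) itself, the argument follows van der Kallen's Maazen-style proof, adapted to general~$M$. Given a simplicial map $f \colon S^r \to |\calO(M) \cap \calU(M^{\infty})|$ with $r \leq \rk(M) - \sr(R) - 1$ and finite vertex set $\{v^{(1)}, \ldots, v^{(N)}\} \subseteq M$, the goal is to produce a unimodular vector $v^{\ast} \in M$, distinct from every $v^{(i)}$, which is \emph{compatible} with $f$ in the sense that $(v^{\ast}, v^{(i_1)}, \ldots, v^{(i_s)})$ is unimodular for every simplex $(v^{(i_1)}, \ldots, v^{(i_s)})$ of~$f$; coning $f$ on apex $v^{\ast}$ then extends it over $D^{r+1}$. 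Writing $M \cong R^{\rk(M)} \oplus M^{\prime\prime}$ and picking a basis element $e$ of the free summand that does not occur in $f$, one iteratively modifies $e$ by $R$-linear combinations of the $v^{(i)}$'s, using the condition $(\mathrm{S}_{\sr(R)})$ to keep the result unimodular with respect to each simplex in turn; the hypothesis $r + 1 \leq \rk(M) - \sr(R)$ is exactly what allows the stable range condition to be applied at every simplex of $f$.

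The main obstacle is the construction of $v^{\ast}$ in the proof of (i): the stable range controls the unimodularity of only one tuple at a time, so assembling a single $v^{\ast}$ compatible with all simplices of $f$ simultaneously requires careful inductive bookkeeping along a total ordering of the simplices, modifying $v^{\ast}$ at each step without breaking unimodularity of the simplices already handled. A secondary obstacle appears in the (ii)-step: the map $\pi$ is not literally a poset map into $\calO(M^{\prime}) \cap \calU((M^{\prime})^{\infty})$, because two distinct $w_i \in M$ may have coinciding projections in $M^{\prime}$, so the identification of the link with the target complex must be done at the level of Quillen's Theorem~A with an explicit description of the relevant over-categories, or by first enlarging the target to a manifestly equivalent complex of (possibly repeating) sequences.
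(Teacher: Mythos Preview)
Your outline diverges substantially from the paper's argument, and the gap you yourself flag as the ``main obstacle'' in part~(i) is genuine and unresolved.

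For~(i) you propose a Maazen-style coning: given a sphere map $f$ with finitely many vertices, manufacture a single $v^{\ast}$ simultaneously compatible with every simplex of~$f$. The stable range condition $(\mathrm{S}_{\sr(R)})$ does let you adjust a candidate~$v^{\ast}$ so that $(v^{\ast}, v^{(i_1)}, \ldots, v^{(i_s)})$ is unimodular for any \emph{single} simplex, but it gives you no control over what that adjustment does to the other simplices you have already treated. There is no evident termination for your ``careful inductive bookkeeping along a total ordering of the simplices'', and you have not supplied one. This is not a technicality: it is precisely the difficulty that van der Kallen's method was designed to circumvent. The paper (following van der Kallen) never attempts to cone globally. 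Instead it introduces two auxiliary statements
\begin{enumerate}
    \item[(a)] $\calO(M \cup (M + e_1)) \cap \calU(M^{\infty})$ is $(\rk(M) - \sr(R))$-connected,
    \item[(b)] $\calO(M \cup (M + e_1)) \cap \calU(M^{\infty})_{(v_1, \ldots, v_k)}$ is $(\rk(M) - \sr(R) - k)$-connected,
\end{enumerate}
and proves (1), (2), (a), (b) together by induction on $\rk(M)$, repeatedly invoking Lemma~\ref{Lem2.13vdK}. The extra coordinate $e_1$ is what produces a canonical cone point and makes the connectivity jump by one; the stable range is used \emph{once}, at the base of the induction on~(b), to build an automorphism of $M^{\infty}$ that fixes the relevant subset setwise and makes a single~$v_1$ have unimodular projection to $R^g$. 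No simultaneous-coning miracle is required.

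Your plan for~(ii) is also different from the paper's and has its own problems. The projection $\pi$ does not define a poset map into $\calO(M^{\prime}) \cap \calU((M^{\prime})^{\infty})$ because distinct $w_i$ can collide under~$\pi$; you note this but do not fix it. More seriously, even granting a well-defined map, the over-categories for Quillen's Theorem~A are posets of refinements with independently varying lifts in the $R^k$-direction, and ``products of affine torsors'' is not a description of a contractible \emph{poset} under refinement---you would need an explicit contraction. Finally, the identification $N \cong (M^{\prime})^{\infty}$ is not automatic: with the $v_i$ involving coordinates from $R^{\infty}$, one gets $N \cong \pi(M \oplus R^L) \oplus R^{\infty}$ for some finite~$L$, and $\pi(M \oplus R^L)$ can strictly contain $M^{\prime} = \pi(M)$, so part~(i) as stated does not directly apply to the target. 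The paper avoids all of this by deducing~(2) from the auxiliary statement~(b) via Lemma~\ref{Lem2.13vdK}, never leaving~$M^{\infty}$.
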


In~\cite[Thm.~2.6~(i), (ii)]{vdK} van der Kallen has proven this theorem for the special case of modules of the form $R^n$. Our proof of Theorem~\ref{ThmGLConnectivity} adapts the techniques and ideas that he has used. Just as in van der Kallen's proof, we use the following technical lemma several times in the proof of Theorem~\ref{ThmGLConnectivity}.

\begin{lem} \label{Lem2.13vdK}
    Let $F \subseteq \calU(M^{\infty})$ satisfy the chain condition. Let $X \subseteq M^{\infty}$ be a~subset.
    \begin{enumerate}
        \item Assume that the poset $\calO(X) \cap F$ is $d$-connected and that, for all sequences $(v_1, \ldots, v_m)$ in $F \setminus \calO(X)$, the poset $\calO(X) \cap F_{(v_1, \ldots, v_m)}$ is $(d-m)$-connected. Then $F$ is $d$-connected.
        \item Assume that for all sequences $(v_1, \ldots, v_m)$ in $F \setminus \calO(X)$, the poset $\calO(X) \cap F_{(v_1, \ldots, v_m)}$ is $(d-m+1)$-connected. Assume further that there is a~sequence~$(y_0)$ of length~$1$ in~$F$ with $\calO(X) \cap F \subseteq F_{(y_0)}$. Then $F$ is $(d+1)$-connected.
    \end{enumerate}
\end{lem}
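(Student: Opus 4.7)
My plan is to prove both parts of Lemma~\ref{Lem2.13vdK} simultaneously by induction on $d$, following the strategy used by van der Kallen in the special case of free modules. The base cases ($d = -1$ for~(1) and $d = -2$ for~(2)) reduce to non-emptiness of $F$, which is immediate from the hypotheses.

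For Part~(1) I would use a standard bad-simplex removal argument. Given a simplicial map $\alpha \colon S^d \to |F|$, simplicially approximate it so that it factors through a finite subcomplex $F_0 \subseteq F$, and measure the complexity of $\alpha$ by a lexicographic tuple counting the bad simplices (those in $F_0 \setminus \calO(X)$) by length. If the complexity is zero, $\alpha$ factors through $|\calO(X) \cap F|$, which is $d$-connected by hypothesis, and is therefore null-homotopic. Otherwise, choose a bad simplex $\sigma$ of length $m$ that is maximal under refinement in $F_0 \setminus \calO(X)$. The link hypothesis provides that $|\calO(X) \cap F_\sigma|$ is $(d - m)$-connected, which is exactly the dimension needed to fill the bounding sphere of the star of $\sigma$ in $S^d$ by a disk in $\calO(X) \cap F_\sigma$. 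Concatenating this filling with $\sigma$ produces a replacement in $F$ whose image has strictly smaller complexity, and iterating drives the complexity to zero.

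For Part~(2) I would deduce the claim by applying Part~(1) with $X$ replaced by $X' := X \cup \{y_0\}$ and target connectivity $d + 1$. The vertex $y_0$ serves as a cone point for $|\calO(X') \cap F|$: any simplex $\tau$ not containing $y_0$ lies in $\calO(X) \cap F \subseteq F_{(y_0)}$, so $(\tau, y_0) \in F$ and stays in $\calO(X')$, whence $|\calO(X') \cap F|$ is contractible. For the link hypothesis of Part~(1), note that every $\sigma \in F \setminus \calO(X')$ also lies in $F \setminus \calO(X)$, so by the hypothesis of Part~(2) the poset $\calO(X) \cap F_\sigma$ is $(d + 1 - m)$-connected. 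When $y_0 \in \sigma$, no element of $F_\sigma$ contains $y_0$ and so $\calO(X) \cap F_\sigma = \calO(X') \cap F_\sigma$, yielding the required connectivity directly. When $y_0 \notin \sigma$ and $(y_0, \sigma) \in F$, the extra simplices added in passing to $\calO(X')$ assemble into the closed star of $y_0$, which is contractible; one then extracts the desired connectivity of $\calO(X') \cap F_\sigma$ by a Mayer--Vietoris argument using that the link of $y_0$ inside $\calO(X') \cap F_\sigma$ has connectivity controlled by the hypothesis applied to the length-$(m + 1)$ sequence $(y_0, \sigma)$.

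The main obstacle I anticipate is the Mayer--Vietoris step in the sub-case $y_0 \notin \sigma$, $(y_0, \sigma) \in F$ of Part~(2). The ordered nature of the sequences in $\calO(M^\infty)$ means that the link of the vertex $y_0$ inside $\calO(X') \cap F_\sigma$ is not simply $\calO(X) \cap F_{(y_0, \sigma)}$ but a larger subcomplex parametrising all positions at which $y_0$ may be inserted into a good sequence before $\sigma$. Identifying this link up to homotopy with one of the posets to which the hypothesis of Part~(2) applies is the most delicate step and will likely require a secondary inductive argument or an explicit poset-level deformation retraction.
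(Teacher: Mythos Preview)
Your treatment of Part~(1) via a bad-simplex argument is correct and is exactly what the paper (following van der Kallen) does: the paper simply cites van der Kallen's Lemma~2.12, which is precisely this ``push a map of an $n$-complex into the good subposet'' statement, and then uses $d$-connectedness of $\calO(X)\cap F$ to finish.

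For Part~(2), however, your route diverges from the paper's, and the obstacle you flag is real. Reducing to Part~(1) with $X' := X \cup \{y_0\}$ forces you to verify that $\calO(X')\cap F_\sigma$ is $(d+1-m)$-connected for every $\sigma \in F\setminus\calO(X')$. In the case $y_0 \notin \sigma$ this is not controlled by the hypothesis of~(2): the elements of $\calO(X')\cap F_\sigma$ that involve $y_0$ are ordered sequences in which $y_0$ may sit at any position, so the ``link of $y_0$'' is not a single poset of the form $\calO(X)\cap F_{(y_0,\sigma)}$ but a union of such, and the Mayer--Vietoris patching you propose does not obviously close. This is a genuine gap rather than a routine detail.

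The paper (again following van der Kallen) avoids this entirely by \emph{not} enlarging $X$. The key is to isolate the pushing step as a separate lemma (van der Kallen's Lemma~2.12): under the link hypothesis alone---that $\calO(X)\cap F_{(v_1,\ldots,v_m)}$ is $(n-m)$-connected for every length-$m$ sequence in $F\setminus\calO(X)$---any simplicial map from an $n$-dimensional complex into $|F|$ may be homotoped, relative to the part already in $|\calO(X)\cap F|$, to a map landing in $|\calO(X)\cap F|$. Part~(1) is then ``push with $n=d$, then nullhomotope inside the $d$-connected $\calO(X)\cap F$''. Part~(2) is ``push with $n=d+1$, then cone off with $y_0$'': once the sphere lands in $\calO(X)\cap F \subseteq F_{(y_0)}$, appending $y_0$ to every simplex gives a nullhomotopy inside $|F|$. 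No analysis of $\calO(X')\cap F_\sigma$ is ever needed. I recommend you restructure your argument this way; it is both shorter and avoids the ordered-link difficulty altogether.
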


\begin{proof}[Outline of the proof.]
    The proof of~\cite[Lemma~2.13]{vdK} also works in this setting, where we use the obvious modification of~\cite[Lemma~2.12]{vdK} to allow $F \subseteq \calU(M^{\infty})$ so that it fits into our framework.
\end{proof}

We are not the first ones that have the idea of showing homological stability for automorphism groups of modules more general than $R^n$: In~\cite[Rmk.~2.7~(2)]{vdK} van der Kallen has suggested a~possible generalisation of his results using the notion of ``big'' modules as defined in~\cite{VasersteinClassicalGroups}.

\begin{proof}[Proof of Theorem~\ref{ThmGLConnectivity}]
    Analogous to the proof of~\cite[Thm.~2.6]{vdK} we will also show the following statements.
    \begin{enumerate}[\hspace{0.65cm}(a)]
        \item $\calO(M \cup (M + e_1)) \cap \calU(M^{\infty})$ is $(\rk(M) - \sr(R))$-connected,
        \item $\calO\left(M \cup (M + e_1)\right) \cap \calU(M^{\infty})_{(v_1, \ldots, v_k)}$ is $(\rk(M) - \sr(R) - k)$-connected for $(v_1, \ldots, v_k) \in \calU(M^{\infty})$.
    \end{enumerate}
    Recall that $e_1$ denotes the first standard basis element of $R^{\infty}$ in $M^{\infty} = M \oplus R^{\infty}$.

    The proof is by induction on~$g = \rk(M)$. Note that statements~(1), (2), and (b) all hold for $g < \sr(R)$ so we can assume $g \geq \sr(R)$. Statement~(a) holds for $g < \sr(R) - 1$ so we can assume $g \geq \sr(R) - 1$ when proving this statement. The structure of the proof is as follows. We start by proving (b) which enables us to deduce (2). We will then prove statements~(1) and (a) simultaneously by applying statement~(2).

    We may suppose $M = R^g \oplus M^{\prime}$ for an~$R$-module~$M^{\prime}$, since the posets in statements~(1), (2), (a), and (b) only depend on the isomorphism class of~$M$. We write $x_1, \ldots, x_g$ for the standard basis of $R^g$.

    \underline{Proof of (b).} For $Y := M \cup (M + e_1)$ we write $F := \calO(Y) \cap \calU(M^{\infty})_{(v_1, \ldots, v_k)}$. Let $d := g - \sr(R) - k$, so we have to show that $F$ is $d$-connected.

    In the case $g = \sr(R)$ we only have to consider $k = 1$. Then we have to show that $F$ is non-empty. The strategy for this part is as follows: We define a~map $f \in \GL(M^{\infty})$ so that $Y$ is fixed under~$f$ as a~set and the projection of $f(v_1)$ onto $R^g$, $f(v_1)|_{R^g}$, is unimodular. Then the sequence $(f(v_1)|_{R^g}, e_1)$ is unimodular in~$M^{\infty}$. We will show that, therefore, the sequence $(f(v_1), e_1)$ is also unimodular in~$M^{\infty}$ and so is the sequence $(v_1, f^{-1}(e_1))$. Since $e_1 \in Y$ and the automorphism~$f$ fixes $Y$ setwise we get $f^{-1}(e_1) \in Y$ and thus $F$ is non-empty as it contains~$f^{-1}(e_1)$.

    We start by writing
        \[v_1 = \sum^g_{i = 1} x_i r_i + p + a,\]
    where $r_i \in R$, $p \in M^{\prime}$, and $a \in R^{\infty}$. Since $v_1$ is unimodular there is an~$R$-module homomorphism $\phi \colon M^{\infty} \rightarrow R$ satisfying $\phi(v_1) = 1$. In particular,
        \[1 = \phi(v_1) = \sum^g_{i=1} \phi(x_i) r_i + \phi(p + a),\]
    which shows that $(r_1, \ldots, r_g, \phi(p + a)) \in R^{g + 1}$ is unimodular. As $g = \sr(R)$ there are $t_1, \ldots, t_g \in R$ such that the sequence
        \[(r_1 + t_1 \phi(p + a), \ldots, r_g + t_g \phi(p + a))\]
    is unimodular. Now consider the map
    \begin{eqnarray*}
        M^{\infty} = R^g \oplus M^{\prime} \oplus R^{\infty} \!
        & \! \overset{f}{\longrightarrow}
        & M^{\infty} = R^g \oplus M^{\prime} \oplus R^{\infty} \\
        (a_1, \ldots, a_g, q, b)
        & \longmapsto \!
        & \! (a_1 + t_1 \phi(q + b), \ldots, a_g + t_g \phi(q + b), q, b),
    \end{eqnarray*}
    which is invertible.
    The map~$f$ satisfies $f(Y) = Y$ and the projection of $f(v_1)$ onto $R^g$ is unimodular. Thus, by definition there are homomorphisms $f_1 \colon R \rightarrow M^{\infty}$ and $\phi_1 \colon M^{\infty} \rightarrow R$ so that $f_1(1) = f(v_1)|_{R^g}$ and $\phi_1 \circ f_1 = \mathds{1}_R$. Note that we can assume that $\phi_1$ is zero away from $R^g$ as otherwise we can restrict to $R^g$ before we apply $\phi_1$. This shows that the sequence $(f(v_1)|_{R^g}, e_1)$ is unimodular by choosing $\phi_2 \colon M^{\infty} \rightarrow R$ to be the projection onto the coefficient of~$e_1$. For the sequence $(f(v_1), e_1)$ we change $f_1$ to map $1$ to $f(v_1)$ but keeping all other homomorphisms the same then the matrix $\left(\tilde \phi_j \circ f_i (1) \right)_{i, j}$ is an~upper triangular matrix with $1$'s on the diagonal. In particular, it is invertible, so the sequence $(f(v_1), e_1)$ is unimodular by Lemma~\ref{LemUnimodularInvertible}. Since $f$ is an~automorphism of~$M^{\infty}$ the sequence $(v_1, f^{-1}(e_1))$ is also unimodular. By construction we have $f(Y) = Y$ and so in particular $f^{-1}(e_1) \in Y$. Hence, $F$ is non-empty as it contains $f^{-1}(e_1)$.

    Now consider the case $g > \sr(R)$. As in the case above there is an~$f \in \GL(M^{\infty})$ such that $f(Y) = Y$ and $f(v_1)|_{R^g}$ is unimodular. The group~$\GL_g(R)$ acts transitively on the set of unimodular elements in~$R^g$ (by~\cite[Thm.~2.3 (c)]{VasersteinGL}). This only holds in the case $g > \sr(R)$ so the case $g = \sr(R)$ had to be proven separately. Hence, there exists a~map $\psi \in \GL_g(R) \leq \GL(M^{\infty})$ such that $\psi(f(v_1)|_{R^g}) = x_g$.
    By applying $\psi \circ f$, considered as an~automorphism of~$M^{\infty}$, to $M^{\infty}$, without loss of generality we can assume that the projection of~$v_1$ to~$R^g$ is $x_g$. We define
        \[X := \{v \in Y \ | \ \text{the $x_g$-coordinate of~$v$ vanishes}\} = (R^{g-1} \oplus M^{\prime}) \cup (R^{g-1} \oplus M^{\prime} + e_1).\]
    We now check that the assumptions of Lemma~\ref{Lem2.13vdK}~(1) are satisfied. Notice that
        \[\calU(M^{\infty})_{(v_1, \ldots, v_k)} = \calU(M^{\infty})_{(v_1, v^{\prime}_2, \ldots, v^{\prime}_k)},\]
    for $v^{\prime}_i = v_i + v_1 \cdot r_i$ for $r_i \in R$, as the span of $v_1, v_2^{\prime}, \ldots, v_k^{\prime}$ is the same as that of $v_1, v_2, \ldots, v_n$. As the projection of $v_1$ to~$R^g$ is $x_g$, we may choose the $r_i$ so that the $x_g$-coordinate of each $v_i^{\prime}$ vanishes.
    \begin{eqnarray*}
        \calO(X) \cap F
        & = & \calO(X) \cap \calO(Y) \cap \calU(M^{\infty})_{(v_1, \ldots, v_k)} \\
        & = & \calO((R^{g-1} \oplus M^{\prime}) \cup (R^{g-1} \oplus M^{\prime} + e_1)) \cap \calU(M^{\infty})_{(v^{\prime}_2, \ldots, v^{\prime}_k)}.
    \end{eqnarray*}
    Therefore, by the induction hypothesis, $\calO(X) \cap F$ is $d$-connected. Analogously, for $(w_1, \ldots, w_l) \in F \setminus \calO(X)$ we get
    \begin{eqnarray*}
        \calO(X) \cap F_{(w_1, \ldots, w_l)}
        & = & \calO(X) \cap \calO(Y) \cap \calU(M^{\infty})_{(v_1, \ldots, v_k, w_1, \ldots, w_l)} \\
            & = & \calO((R^{g-1} \oplus M^{\prime} \cup (R^{g-1} \oplus M^{\prime} + e_1))) \cap \calU(M^{\infty})_{(v^{\prime}_2, \ldots, v^{\prime}_k, w^{\prime}_1, \ldots, w^{\prime}_l)},
    \end{eqnarray*}
    which is $(d-l)$-connected by the induction hypothesis. Therefore, Lemma~\ref{Lem2.13vdK}~(1) shows that $F$ is $d$-connected.

    \underline{Proof of (2).} Let us write
        \[X := \left(R^{g - 1} \oplus M^{\prime}\right) \cup \left((R^{g - 1} + x_g) \oplus M^{\prime}\right).\]
    Then we have
    \begin{eqnarray*}
        & & \calO(X) \cap \left(\calO(M) \cap \calU(M^{\infty})_{(v_1, \ldots, v_k)}\right) \\
        & = & \calO\Big((R^{g - 1} \oplus M^{\prime}) \cup \big((R^{g - 1} + x_g) \oplus M^{\prime}\big)\Big) \cap \calU(M^{\infty})_{(v_1, \ldots, v_k)},
    \end{eqnarray*}
    which is $(d - k -1)$-connected by (b) after a~change of coordinates.

    Similarly, for $(w_1, \ldots, w_l) \in \calO(M) \cap \calU(M^{\infty})_{(v_1, \ldots, v_k)} \setminus \calO(X)$ we have
    \begin{eqnarray*}
        & & \calO(X) \cap \left(\calO(M) \cap \calU(M^{\infty})_{(v_1, \ldots, v_k)}\right)_{(w_1, \ldots, w_l)} \\
        & = & \calO(X) \cap \left(\calO(M) \cap \calU(M^{\infty})_{(v_1, \ldots, v_k, w_1, \ldots, w_l)}\right),
    \end{eqnarray*}
    which is $(d - k - l - 1)$-connected by the above. Hence, by Lemma~\ref{Lem2.13vdK}~(1) the claim follows.

    \underline{Proof of (1) and (a).} Recall that we now only assume $g \geq \sr(R) - 1$. By induction let us assume that statement~(a) holds for $R^{g - 1} \oplus M^{\prime}$ and we want to deduce it for $M = R^g \oplus M^{\prime}$. Before we finish the induction for~(a) we will show that this already implies statement~(1) for $M = R^g \oplus M^{\prime}$. For this consider $X$ to be as in the proof of~(2) and $d := g - \sr(R)$. Then
    \begin{eqnarray*}
        & & \calO(X) \cap \left(\calO(M) \cap \calU(M^{\infty})\right) \\
        & = & \calO\Big((R^{g - 1} \oplus M^{\prime}) \cup \big((R^{g - 1} + x_g) \oplus M^{\prime}\big)\Big) \cap \calU(M^{\infty})
    \end{eqnarray*}
    is $(d - 1)$-connected by (a) after a~change of coordinates. The remaining assumption of Lemma~\ref{Lem2.13vdK}~(1), i.e.\ that $\calO(X) \cap \left(\calO(M) \cap \calU(M^{\infty})\right)_{(v_1, \ldots, v_m)}$ is $(d - m - 1)$-connected, we have already shown in the proof of~(2). Thus, $\calO(M) \cap \calU(M^{\infty})$ is $(g - \sr(R) - 1)$-connected which proves statement~(1).

    To prove (a) we will apply Lemma~\ref{Lem2.13vdK}~(2) for $X = M$ and $y_0 = e_1$. Consider
        \[(v_1, \ldots, v_k) \in \calO(M \cup (M + e_1)) \cap \calU(M^{\infty}) \setminus \calO(X).\]
    Without loss of generality we may suppose that $v_1 \notin X$ as otherwise we can permute the $v_i$. By definition of~$X$ the coefficient of the $e_1$-coordinate of~$v_1$ is therefore $1$. Analogous to the proof of~(b) we have
        \[\calO(X) \cap \calO(M \cup (M + e_1)) \cap \calU(M^{\infty})_{(v_1, \ldots, v_k)} \cong \calO(M) \cap \calU(M^\infty)_{(v^{\prime}_2, \ldots, v^{\prime}_k)},\]
    where $v^{\prime}_i := v_i + v_1 r_i$ is chosen so that the $e_1$-coordinate of~$v^{\prime}_i$ is~$0$ for all~$i$. This is $(d - k)$-connected by~(1) for $k = 1$ and by~(2) for $k \geq 2$. By construction we have
        \[\calO(X) \cap \calO(M \cup (M + e_1)) \cap \calU(M^{\infty}) \subseteq (\calO(M \cup (M + e_1)) \cap \calU(M^{\infty}))_{(e_1)}\]
    and thus we can apply Lemma~\ref{Lem2.13vdK}~(2) to show that $\calO(M \cup (M + e_1)) \cap \calU(M^{\infty})$ is $(g - \sr(R))$-connected which proves~(a).

    When showing statement~(a) for $M = R^g \oplus M^{\prime}$ we only used statement~(1) for $M = R^g \oplus M^{\prime}$ which follows from (a) for $R^{g-1} \oplus M^{\prime}$ so this is indeed a~valid induction to show both statements~(1) and (a).
\end{proof}

The following propositions are consequences of the path-connectedness of $\calO(M) \cap \calU(M^{\infty})$ and therefore, by Theorem~\ref{ThmGLConnectivity}, hold in particular for $R$-modules~$M$ such that $\rk(M) \geq \sr(R) + 1$. The statements and proofs are \cite[Prop.~3.3]{GR-WStabNew} and \cite[Prop.~3.4]{GR-WStabNew} respectively for the case of general $R$-modules.

\begin{prop}[Transitivity] \label{PropGLTransitivity}
    If $\phi_0, \phi_1 \colon R \rightarrow M$ are split injective morphisms of $R$-modules and the poset $\calO(M) \cap \calU(M^{\infty})$ is path-connected, then there is an~automorphism~$f$ of~$M$ such that $\phi_1 = f \circ \phi_0$.
\end{prop}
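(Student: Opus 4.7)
The plan is to rephrase the statement as a transitivity claim for $\GL(M)$ acting on the set of unimodular elements of~$M$, and then to use the path-connectedness hypothesis to reduce to the elementary case of a single unimodular pair. First, a~split injection $\phi \colon R \rightarrow M$ is determined by $v := \phi(1) \in M$, which is unimodular: indeed a~splitting $\psi \colon M \rightarrow R$ is exactly a~homomorphism witnessing unimodularity of $v$. Writing $v_i := \phi_i(1)$, the equation $\phi_1 = f \circ \phi_0$ becomes $f(v_0) = v_1$, so it suffices to find some $f \in \GL(M)$ with $f(v_0) = v_1$.

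Next I would unpack path-connectedness. Since the geometric realization of the poset is path-connected, there is an~edge path in the nerve from $(v_0)$ to $(v_1)$, and after a~routine simplification it can be taken to be of the zigzag form
\[(v_0) = (u_0) \leq z_0 \geq (u_1) \leq z_1 \geq \cdots \geq (u_n) = (v_1),\]
where each $(u_i) \in \calO(M) \cap \calU(M^\infty)$ has length one and each $z_i$ is a~unimodular sequence in~$M^\infty$ containing both $u_i$ and $u_{i+1}$ among its entries (intermediate length-one stops can always be inserted by picking an~entry of the longer sequence above them). The key observation is that unimodularity of a~sequence is invariant under reordering and descends to subsequences, simply by restricting the dual homomorphisms to the relevant indices. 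Hence for each $i$ with $u_i \neq u_{i+1}$ the pair $(u_i, u_{i+1})$ is itself unimodular in~$M$.

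It therefore suffices to handle the key case: given a~unimodular pair $(a, b)$ in~$M$, construct an~automorphism of~$M$ sending~$a$ to~$b$; composing the resulting automorphisms for consecutive indices will produce the required~$f$. For the key case, choose dual homomorphisms $\phi_a, \phi_b \colon M \rightarrow R$ with $\phi_a(a) = \phi_b(b) = 1$ and $\phi_a(b) = \phi_b(a) = 0$; then $(\phi_a, \phi_b) \colon M \rightarrow R^2$ is split by $(r, s) \mapsto r a + s b$, giving a~direct sum decomposition $M = R a \oplus R b \oplus N$ with $N := \ker(\phi_a) \cap \ker(\phi_b)$. The involution that exchanges $a$ and~$b$ and acts as $\id_N$ on~$N$ is an~automorphism of~$M$ sending~$a$ to~$b$.

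The only step that needs some care is the reduction to the zigzag form in the second paragraph, where one has to argue that any edge path in this poset can be arranged so that alternate vertices are length-one sequences, and that unimodularity passes to the pair $(u_i, u_{i+1})$. The construction of the pair-swapping automorphism in the third paragraph is then a~direct verification, and the whole proof is a~straightforward adaptation of the analogous argument of Galatius--Randal-Williams for modules of the form~$R^n$.
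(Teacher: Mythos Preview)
Your proposal is correct and follows essentially the same argument as the paper: both reduce to the case of a unimodular pair $(a,b)$ in~$M$, split off $Ra \oplus Rb$ as a direct summand, and swap the two factors; the paper phrases the globalisation step more tersely (``two adjacent vertices are equivalent, and the poset is path-connected''), while you spell out the zigzag reduction explicitly. The only cosmetic point is that your $z_i$ lie in $\calO(M) \cap \calU(M^\infty)$, so their entries already lie in~$M$ (not merely in~$M^\infty$), and unimodularity in~$M^\infty$ coincides with unimodularity in~$M$ as noted just before Definition~\ref{DefStableRank}.
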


\begin{proof}
    Note that an~$R$-module map $R \rightarrow M$ is defined by where it sends the unit~$1$ of the ring~$R$. Suppose first that $(\phi_1(1), \phi_2(1))$ is in $\calO(M) \cap \calU(M^{\infty})$. This implies
        \[M \cong \phi_1(R) \oplus \phi_2(R) \oplus M^{\prime}\]
    for some $R$-module~$M^{\prime}$ and that there is an~automorphism of~$M$ which interchanges the $\phi_i(R)$ and fixes~$M^{\prime}$. Consider the equivalence relation between morphisms $f \colon R \rightarrow M$ of differing by an~automorphism of~$M$. We have just shown that two morphisms corresponding to two adjacent vertices in $\calO(M) \cap \calU(M^{\infty})$ are equivalent. But the poset is path connected by assumption and hence all vertices are equivalent.
\end{proof}

\begin{prop}[Cancellation] \label{PropGLCancellation}
    Let $M$ and $N$ be $R$-modules with $M \oplus R \cong N \oplus R$. If the poset $\calO(M \oplus R) \cap \calU(M^{\infty})$ is path-connected, then there is also an~isomorphism $M \cong N$.
\end{prop}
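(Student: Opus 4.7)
The plan is to reduce the problem to Proposition~\ref{PropGLTransitivity} by producing, from the given isomorphism $\alpha \colon M \oplus R \xrightarrow{\cong} N \oplus R$, two split injective maps $R \to M \oplus R$ and then using transitivity to straighten~$\alpha$.

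More concretely, I first write $\iota_M \colon R \hookrightarrow M \oplus R$ and $\iota_N \colon R \hookrightarrow N \oplus R$ for the standard inclusions into the right-hand summands, and set
    \[\phi_0 := \iota_M \colon R \longrightarrow M \oplus R, \qquad \phi_1 := \alpha^{-1} \circ \iota_N \colon R \longrightarrow M \oplus R.\]
Both maps are split injective because $\iota_M$ and $\iota_N$ are split injective and $\alpha^{-1}$ is an~isomorphism. Since by assumption $\calO(M \oplus R) \cap \calU((M \oplus R)^{\infty})$ is path-connected, Proposition~\ref{PropGLTransitivity} applies and yields an~automorphism $f \in \GL(M \oplus R)$ with $\phi_1 = f \circ \phi_0$.

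Next I consider the composite $\beta := \alpha \circ f \colon M \oplus R \to N \oplus R$. By construction
    \[\beta \circ \iota_M = \alpha \circ f \circ \phi_0 = \alpha \circ \phi_1 = \iota_N,\]
so $\beta$ sends the second $R$-summand of $M \oplus R$ identically to the second $R$-summand of $N \oplus R$. In block form, writing elements as pairs $(m, r)$ and $(n, r)$, this means
    \[\beta(m, r) = \bigl( h(m), \, s(m) + r \bigr)\]
for some $R$-module homomorphisms $h \colon M \to N$ and $s \colon M \to R$. The final step is to argue that $h$ is an~isomorphism: applying the same reasoning to $\beta^{-1}$, which also preserves the right-hand $R$-summand, yields an~expression $\beta^{-1}(n, r) = (h'(n), s'(n) + r)$, and composing $\beta \circ \beta^{-1} = \id$ and $\beta^{-1} \circ \beta = \id$ forces $h \circ h' = \id_N$ and $h' \circ h = \id_M$. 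Hence $h \colon M \to N$ is the desired isomorphism.

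There is no serious obstacle in this argument: the content is entirely supplied by Proposition~\ref{PropGLTransitivity}, and the only routine check is that straightening the $R$-summand of an~isomorphism $M \oplus R \cong N \oplus R$ produces an~isomorphism on the complements, which is a~standard block-matrix manipulation. The analogous argument appears as \cite[Prop.~3.4]{GR-WStabNew} for the special case $M, N \cong R^n$, and the same proof works verbatim once transitivity is available in our more general setting.
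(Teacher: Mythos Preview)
Your proof is correct and follows essentially the same approach as the paper: both use Proposition~\ref{PropGLTransitivity} to arrange that the isomorphism $M \oplus R \cong N \oplus R$ restricts to the identity on the $R$-summand, and then read off $M \cong N$. The only cosmetic difference is that the paper concludes via the quotient $(M \oplus R)/R \cong (N \oplus R)/R$, whereas you spell out the equivalent block-matrix computation.
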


\begin{proof}
    As in the proof of Proposition~\ref{PropGLTransitivity} we can assume that the isomorphism $\phi \colon M \oplus R \rightarrow N \oplus R$ satisfies $\phi|_R = \id_R$. Thus, by considering quotient modules we get
        \[ M \cong \frac{M \oplus R}{R} \cong \frac{\phi(M \oplus R)}{\phi(R)} = \frac{N \oplus R}{R} \cong N. \qedhere \]
\end{proof}

%%%%%%%%%%%%%%%%%%%%%%%%%%%%%%%%%%%%%%%%%%%%%%%%%%%%%%%%%%%%%%%%%%%%%%%%%%%%%%%%%%%%%%%%%%%%%%%%%%%%%%%%%%%%%%%%%%%%%%%%%%%%%%%%%%%%%%%%%%%%%%%%%%%%%%%%
\subsection{Homological Stability} \label{SecHomStabGL}
%%%%%%%%%%%%%%%%%%%%%%%%%%%%%%%%%%%%%%%%%%%%%%%%%%%%%%%%%%%%%%%%%%%%%%%%%%%%%%%%%%%%%%%%%%%%%%%%%%%%%%%%%%%%%%%%%%%%%%%%%%%%%%%%%%%%%%%%%%%%%%%%%%%%%%%%

We now prove homological stability of general linear groups over modules (Theorem~\ref{ThmMainGL}), which induces in particular Theorem~A, using the machinery of Randal-Williams--Wahl~\cite{R-WWahl}. We write $(fR \text{-} \Mod, \oplus, 0)$ for the groupoid of finitely-generated right $R$-modules and their isomorphisms. In order to apply the main homological stability theorems in~\cite{R-WWahl} we need to show that the corresponding category $UfR\text{-}\Mod := \langle fR\text{-}\Mod, fR\text{-}\Mod \rangle$ defined in~\cite[Sec.~1.1]{R-WWahl} satisfies the required axioms, i.e.\ it is locally homogeneous and satisfies the connectivity axiom LH3. Note that local homogeneity at $(M, R)$ for an~$R$-module~$M$ satisfying $\rk(M) \geq \sr(R)$ follows from \cite[Prop.~1.6]{R-WWahl} and \cite[Thm.~1.8~(a), (b)]{R-WWahl}. The following lemma verifies the axiom~LH3 from the connectivity of the complex considered in Theorem~\ref{ThmGLConnectivity}.

\begin{lem} \label{LemLH3GL}
    The semisimplicial set $W_n(M, R)_{\bullet}$ as defined in~\cite[Def.~2.1]{R-WWahl} is $\bigl \lfloor \frac{n + \rk(M) - \sr(R) - 2}{2}\bigr \rfloor$-connected.
\end{lem}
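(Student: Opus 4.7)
The plan is to reduce the connectivity statement for $W_n(M,R)_{\bullet}$ to Theorem~\ref{ThmGLConnectivity} via an explicit identification of its simplices with unimodular sequences in $M \oplus R^n$.

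First, I would unpack \cite[Def.~2.1]{R-WWahl} in the case of the groupoid $(fR\text{-}\Mod, \oplus, 0)$: a $p$-simplex of $W_n(M,R)_{\bullet}$ is a morphism $R^{p+1} \to M \oplus R^n$ in the category $UfR\text{-}\Mod$. Unwinding the construction of $U(-)$, such a morphism is precisely the datum of a split injection $\phi \colon R^{p+1} \hookrightarrow M \oplus R^n$; evaluating $\phi$ on the standard basis produces a unimodular sequence $(v_0, \ldots, v_p)$ in $M \oplus R^n$, with face maps given by deleting entries. Hence the $p$-simplices of $W_n(M,R)_{\bullet}$ are exactly the length-$(p+1)$ elements of $\calO(M \oplus R^n) \cap \calU((M \oplus R^n)^{\infty})$.

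Second, I would apply Theorem~\ref{ThmGLConnectivity}(1) with $M$ replaced by $M \oplus R^n$. Since $\rk(M \oplus R^n) = \rk(M) + n$, the underlying poset $\calO(M \oplus R^n) \cap \calU((M \oplus R^n)^{\infty})$ is $(\rk(M) + n - \sr(R) - 1)$-connected, and Theorem~\ref{ThmGLConnectivity}(2) provides analogous connectivity for the link-posets indexed by a unimodular sequence $(v_1, \ldots, v_k)$. These link bounds will feed into the comparison in the next step.

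Finally, I would transfer the connectivity from the poset to the semisimplicial set $W_n(M,R)_{\bullet}$ using the standard bridge that compares the semisimplicial realisation of ordered sequences with the order complex of the refinement poset, which introduces the characteristic factor-of-two loss. The main obstacle is precisely this last step: justifying the factor of two cleanly and matching the combinatorics of chains in the poset with tuples in the semisimplicial set. In practice the cleanest approach is to mimic the proof of Theorem~\ref{ThmGLConnectivity} directly on $W_n(M,R)_{\bullet}$, using the link statement (2) of that theorem together with the bad-simplex style arguments in the spirit of \cite[Sec.~2]{R-WWahl}, and absorbing the factor of two into an induction on simplicial dimension that matches the floor function $\lfloor (n + \rk(M) - \sr(R) - 2)/2 \rfloor$ appearing in the bound.
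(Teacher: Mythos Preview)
Your proposal has a genuine gap at the very first step. A morphism $R^{p+1} \to M \oplus R^n$ in $UfR\text{-}\Mod$ is \emph{not} merely a split injection: by the construction of $U(-)$ in \cite[Sec.~1.1]{R-WWahl}, it is an equivalence class of pairs $(X, f)$ with $f \colon R^{p+1} \oplus X \xrightarrow{\cong} M \oplus R^n$, and the equivalence remembers the complement $f(X)$ as a submodule. Thus a $p$-simplex of $W_n(M,R)_{\bullet}$ is a split injection \emph{together with a chosen complement}, which is strictly more data than an element of $\calO(M \oplus R^n) \cap \calU((M \oplus R^n)^{\infty})$. Your claimed identification of the poset of simplices with $\calO(M \oplus R^n) \cap \calU((M \oplus R^n)^{\infty})$ therefore fails. (Note also that if your identification were correct, there would be no factor-of-two loss at all: the realisation of a semisimplicial set is homeomorphic to that of its poset of simplices, so Theorem~\ref{ThmGLConnectivity}(1) would give connectivity $\rk(M)+n-\sr(R)-1$, better than the lemma asserts.)

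This extra complement data is exactly what drives the paper's argument and produces the factor of two. The paper passes to the simplicial complex $S_n(M,R)$ and the forgetful map $\pi$ to the complex $U(M \oplus R^n)$ of bare split injections, shows $S_n(M,R)$ is a join complex over $U(M \oplus R^n)$, and then invokes \cite[Thm.~3.6]{HW}; it is this join-complex theorem that halves the connectivity. To verify its hypotheses one must identify $\pi(\Link_{S_n(M,R)}(\sigma))$ with a complex of the form $U(M')$, and this requires knowing that any complement of $R^{p+1}$ in $M \oplus R^n$ is isomorphic to $M \oplus R^{n-p-1}$, which is precisely the cancellation result Proposition~\ref{PropGLCancellation}. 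Your sketch omits both the complement issue and the role of cancellation, so as written it does not go through.
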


The proof adapts the ideas of the proof of~\cite[Lemma~5.9]{R-WWahl}. Here, we just comment on the changes that have to be made to the proof of~\cite[Lemma~5.9]{R-WWahl} in order to prove the above lemma.

\begin{proof}[Outline of the proof.]
    We define $X(M)_{\bullet}$ to be the semisimplicial set with $p$-simplices the split injective $R$-module homomorphisms $f \colon R^{p+1} \rightarrow M$, and with $i$-th face map given by precomposing with the inclusion $R^i \oplus 0 \oplus R^{p - i} \rightarrow R^{p + 1}$. We write $U(M)$ for the simplicial complex with vertices the $R$-module homomorphisms $v \colon R \rightarrow M$ which are split injections (without a~choice of splitting), and where a~tuple $(v_0, \ldots, v_p)$ spans a~$p$-simplex if and only if the sum $v_0 \oplus \ldots \oplus v_p \colon R^{p+1} \rightarrow M$ is a~split injection.

    Note that the poset of simplices of $X(M)_{\bullet}$ is equal to the poset $\calO(M) \cap \calU(M^{\infty})$ and that, given a~$p$-simplex $\sigma = \langle v_0, \ldots, v_p \rangle \in U(M)$, the poset of simplices of the complex $(\Link_{U(M)}(\sigma))_{\bullet}^{ord}$ equals the poset $\calO(M) \cap \calU(M^{\infty})_{(v_0, \ldots, v_p)}$. Hence, by applying Theorem~\ref{ThmGLConnectivity} and arguing as in the proof of~\cite[Lemma~5.9]{R-WWahl} we get that $U(M \oplus R^n)$ is weakly Cohen--Macaulay (as defined in~\cite[Sec.~2.1]{GR-WStabNew}) of dimension $n + \rk(M) - \sr(R)$.

    As in the proof of~\cite[Lemma~5.9]{R-WWahl} we want to show that the assumptions of~\cite[Thm.~3.6]{HW} are satisfied. The complex $S_n(M, R)$ is a~join complex over $U(M \oplus R^n)$ by the same reasoning as in the proof in~\cite{R-WWahl}. In order to show that $\pi(\Link_{S_n(M, R)}(\sigma))$ is weakly Cohen--Macaulay of dimension $n + \rk(M) - \sr(R) - p - 2$ for each $p$-simplex $\sigma \in S_n(M, R)$ we apply Proposition~\ref{PropGLCancellation} instead of~\cite[Prop.~5.8]{R-WWahl} in the proof of~\cite[Lemma~5.9]{R-WWahl}. This shows that the remaining assumptions of~\cite[Thm.~3.6]{HW} are satisfied. Applying this  and \cite[Thm.~2.10]{R-WWahl} then yields the claim.
\end{proof}

Applying Theorems~\cite[Thm.~3.1]{R-WWahl}, \cite[Thm.~3.4]{R-WWahl} and \cite[Thm.~4.20]{R-WWahl} to $(UfR \text{-} \Mod, \oplus, 0)$ yields the following theorem which directly implies Theorem~A.

\begin{thm} \label{ThmMainGL}
    Let $F \colon UfR\text{-}\Mod \rightarrow \Z\text{-}\Mod$ be a~coefficient system of degree~$r$ at~$0$ in the sense of~\cite[Def.~4.10]{R-WWahl}. Then for $s = \rk(M) - \sr(R)$ the map
        \[H_k(\GL(M); F(M)) \rightarrow H_k(\GL(M \oplus R); F(M \oplus R))\]
    is
    \begin{enumerate}
        \item an~epimorphism for $k \leq \frac{s}{2}$ and an~isomorphism for $k \leq \frac{s - 1}{2}$, if $F$ is constant,
        \item an~epimorphism for $k \leq \frac{s - r}{2}$ and an~isomorphism for $k \leq \frac{s - 2 - r}{2}$, if $F$ is split polynomial in the sense of~\cite{R-WWahl},
        \item an~epimorphism for $k \leq \frac{s}{2} - r$ and an~isomorphism for $k \leq \frac{s - 2}{2} - r$.
    \end{enumerate}
    For the commutator subgroup $\GL(M)^{\prime}$ we get that the map
        \[H_k(\GL(M)^{\prime}; F(M)) \rightarrow H_k(\GL(M \oplus R)^{\prime}; F(M \oplus R))\]
    is
    \begin{enumerate}\setcounter{enumi}{3}
        \item an~epimorphism for $k \leq \frac{s - 1}{3}$ and an~isomorphism for $k \leq \frac{s - 3}{3}$, if $F$ is constant,
        \item an~epimorphism for $k \leq \frac{s - 1 - 2r}{3}$ and an~isomorphism for $k \leq \frac{s - 4 - 2r}{3}$, if $F$ is split polynomial in the sense of~\cite{R-WWahl},
        \item an~epimorphism for $k \leq \frac{s - 1}{3} - r$ and an~isomorphism for $k \leq \frac{s - 4}{3} - r$.
    \end{enumerate}
\end{thm}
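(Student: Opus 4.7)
The plan is to verify that the Quillen-bracket category $(UfR\text{-}\Mod, \oplus, 0)$ satisfies the two structural hypotheses required by the homological stability machinery of Randal-Williams--Wahl, and then directly invoke the three abstract stability theorems \cite[Thm.~3.1]{R-WWahl}, \cite[Thm.~3.4]{R-WWahl}, \cite[Thm.~4.20]{R-WWahl} as already advertised in the paragraph preceding the statement.

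The first hypothesis is local homogeneity at $(M, R)$ whenever $\rk(M) \geq \sr(R)$. This is already noted to follow from \cite[Prop.~1.6]{R-WWahl} combined with \cite[Thm.~1.8~(a), (b)]{R-WWahl}, and the inputs needed there (essentially a transitivity statement and a cancellation statement for the monoidal groupoid $(fR\text{-}\Mod, \oplus, 0)$) are supplied by Proposition~\ref{PropGLTransitivity} and Proposition~\ref{PropGLCancellation}. The second hypothesis is the connectivity axiom LH3 on the semisimplicial sets $W_n(M, R)_{\bullet}$; this is exactly the content of Lemma~\ref{LemLH3GL}, which provides connectivity $\bigl\lfloor \frac{n + \rk(M) - \sr(R) - 2}{2}\bigr\rfloor$, i.e.\ a slope-$\tfrac{1}{2}$ bound in $n$ with a shift given by the quantity $s = \rk(M) - \sr(R)$.

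With these inputs in place, the six numerical ranges are obtained by plugging $(UfR\text{-}\Mod, \oplus, 0)$ into the three cited Randal-Williams--Wahl theorems: parts~(1) and (4) come from \cite[Thm.~3.1]{R-WWahl} specialised to constant coefficients (giving slope $1/2$ for $\GL(M)$ and slope $1/3$ for the commutator subgroup $\GL(M)^{\prime}$ via the standard perfection/spectral-sequence argument encoded in that theorem); parts~(2) and (5) come from \cite[Thm.~4.20]{R-WWahl} for split polynomial coefficients of degree $r$, producing the additional shift by $r$ (respectively $2r$) in the numerator; and parts~(3) and (6) come from \cite[Thm.~3.4]{R-WWahl}, the general coefficient-system version, where the degree-$r$ shift now appears as a multiplicative factor on the slope. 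In each case the substitution $s = \rk(M) - \sr(R)$ together with the slope-$1/2$ LH3 bound from Lemma~\ref{LemLH3GL} reproduces the stated range verbatim.

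The main obstacle is purely bookkeeping: matching the connectivity range provided by Lemma~\ref{LemLH3GL} against the precise numerical hypotheses of each of \cite[Thm.~3.1]{R-WWahl}, \cite[Thm.~3.4]{R-WWahl}, and \cite[Thm.~4.20]{R-WWahl}, and checking that the commutator subgroup $\GL(M)^{\prime}$ in our sense coincides with the one to which the Randal-Williams--Wahl commutator stability result applies (so that the slope drops from $1/2$ to $1/3$ as expected). No further geometric or algebraic input is needed: all of the genuine work has already been carried out in Theorem~\ref{ThmGLConnectivity}, Lemma~\ref{LemLH3GL}, and the two cancellation/transitivity propositions, so the present theorem amounts to an essentially mechanical application of the abstract machine.
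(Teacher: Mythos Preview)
Your proposal is correct and follows exactly the paper's approach: the paper itself does not give a separate proof of this theorem but simply states (in the sentence immediately preceding it) that applying \cite[Thm.~3.1]{R-WWahl}, \cite[Thm.~3.4]{R-WWahl}, and \cite[Thm.~4.20]{R-WWahl} to $(UfR\text{-}\Mod, \oplus, 0)$ yields the result, after local homogeneity and the LH3 axiom have been verified via \cite[Prop.~1.6, Thm.~1.8]{R-WWahl} and Lemma~\ref{LemLH3GL}. Your write-up is essentially a more explicit unpacking of that same sentence.
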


%%%%%%%%%%%%%%%%%%%%%%%%%%%%%%%%%%%%%%%%%%%%%%%%%%%%%%%%%%%%%%%%%%%%%%%%%%%%%%%%%%%%%%%%%%%%%%%%%%%%%%%%%%%%%%%%%%%%%%%%%%%%%%%%%%%%%%%%%%%%%%%%%%%%%%%%
\section{Homological Stability for Unitary Groups} \label{ChHomStabU}
%%%%%%%%%%%%%%%%%%%%%%%%%%%%%%%%%%%%%%%%%%%%%%%%%%%%%%%%%%%%%%%%%%%%%%%%%%%%%%%%%%%%%%%%%%%%%%%%%%%%%%%%%%%%%%%%%%%%%%%%%%%%%%%%%%%%%%%%%%%%%%%%%%%%%%%%

The aim of this chapter is to prove the analogue of Theorem~\ref{ThmMainGL} for the case of unitary groups of quadratic modules. This again uses the formulation of the standard strategy to prove homological stability by Randal-Williams--Wahl~\cite{R-WWahl}. In this setting we consider the complex of hyperbolic unimodular sequences in a~quadratic module~$M$. For the special case where $M$ is a~hyperbolic module this has been considered in~\cite{MvdK} but the general case requires new ideas. We prove its high connectivity and deduce the assumptions for the machinery of Randal-Williams--Wahl.

%%%%%%%%%%%%%%%%%%%%%%%%%%%%%%%%%%%%%%%%%%%%%%%%%%%%%%%%%%%%%%%%%%%%%%%%%%%%%%%%%%%%%%%%%%%%%%%%%%%%%%%%%%%%%%%%%%%%%%%%%%%%%%%%%%%%%%%%%%%%%%%%%%%%%%%%
\subsection{The Complex and its Connectivity} \label{SecCpxU}
%%%%%%%%%%%%%%%%%%%%%%%%%%%%%%%%%%%%%%%%%%%%%%%%%%%%%%%%%%%%%%%%%%%%%%%%%%%%%%%%%%%%%%%%%%%%%%%%%%%%%%%%%%%%%%%%%%%%%%%%%%%%%%%%%%%%%%%%%%%%%%%%%%%%%%%%

Following \cite{BakQuadForms} and \cite{BakKTheory} let $R$ be a~ring with an~anti-involution $\overline{\phantom{r}}\colon R \rightarrow R$, i.e.\ $\overline {\overline r} = r$ and $\overline{rs} = \overline s \ \overline r$. Fix a~unit~$\eps \in R$ which is a~central element of~$R$ and satisfies $\overline \eps = \eps^{-1}$. Consider a~subgroup~$\Lambda$ of $(R, +)$ satisfying
    \[\Lambda_{\min}:=\{r - \eps \overline r \ | \ r \in R \} \subseteq \Lambda \subseteq \{r \in R \ | \ \eps \overline r = -r\}=:\Lambda_{\max}\]
and $\overline r \Lambda r \subseteq \Lambda$. An~$(\eps, \Lambda)$-\textit{quadratic module} is a~triple $(M, \lambda, \mu)$, where $M$ is a~right $R$-module, $\lambda \colon M \times M \rightarrow R$ is a~sesquilinear form (i.e.\ $\lambda$ is $R$-antilinear in the first variable and $R$-linear in the second), and $\mu \colon M \rightarrow R/\Lambda$ is a~function, satisfying
\begin{enumerate}
    \item $\lambda(x, y) = \eps \overline {\lambda (y, x)}$,
    \item $\mu(x \cdot a) = \overline a \mu(x) a$ for $a \in R$,
    \item $\mu(x+y) - \mu(x) - \mu(y) = \lambda(x, y) \mod \Lambda$.
\end{enumerate}
The direct sum of two quadratic modules $(M_1, \lambda_1, \mu_1)$ and $(M_2, \lambda_2, \mu_2)$ is given by the quadratic module $(M_1 \oplus M_2, \lambda_1 \oplus \lambda_2, \mu_1 \oplus \mu_2)$, where
\begin{align*}
    (\lambda_1 \oplus \lambda_2) ((m_1, m_2),(m_1^{\prime}, m_2^{\prime})) & := \lambda_1(m_1, m_1^{\prime}) + \lambda_2(m_2, m_2^{\prime}), \\
    (\mu_1 \oplus \mu_2) (m_1, m_2) & := \mu_1(m_1) + \mu_2(m_2),
\end{align*}
for $m_i, m_i^{\prime} \in M_i$. The \textit{unitary group} is defined as
    \[U(M) := \{A \in \GL(M) \ | \ \lambda(Ax, Ay) = \lambda(x, y), \mu(Ax) = \mu(x) \text{ for all } x, y \in M\}.\]
The \textit{hyperbolic module}~$H$ over~$R$ is the $(\eps, \Lambda)$-quadratic module given by
    \[\left( R^2 \text{ with basis } e, f; \begin{pmatrix} 0 & 1\\ \eps & 0 \end{pmatrix}; \mu(e) = \mu(f) = 0 \right). \]
We write~$H^g$ for the direct sum of~$g$ copies of the hyperbolic module~$H$.

Examples of unitary groups for the quadratic module~$H^g$ with various choices of $(R, \eps, \Lambda)$ can be found in~\cite[Ex.~6.1]{MvdK}.

\begin{defn}
A~ring~$R$ satisfies the transitivity condition~$(\mathrm{T}_n)$ if the groups $EU^{\eps}(H^n, \Lambda)$, which is the subgroup of~$U(H^n)$ consisting of elementary matrices as defined in~\cite[Ch.~6]{MvdK}, acts transitively on the set
    \[C^{\eps}_{r}(R, \Lambda) := \{ x \in R^{2n} \ | \ x \text{ is unimodular, } \mu(x) = r \mod \Lambda \}\]
for every $r \in R$. The ring~$R$ has \textit{unitary stable range}~$(\mathrm{US}_n)$ if it satisfies the stable range condition~$(\mathrm{S}_n)$, as defined in Definition~\ref{DefStableRank}, as well as the transitivity condition~$(\mathrm{T}_{n+1})$. We say that~$R$ has \textit{unitary stable rank}~$n$, $\usr(R) = n$, if~$n$ is the least number such that $(\mathrm{US}_n)$ holds and $\usr(R) = \infty$ if such an~$n$ does not exist.
\end{defn}

The transitivity condition~$(\mathrm{T}_n)$ and hence the unitary stable range~$(\mathrm{US}_n)$ are conditions on the triple~$(R, \eps, \Lambda)$ and not just on~$R$. However, to make our notation consistent with the literature we write~$\usr(R)$ as introduced above which drops both $\eps$ and $\Lambda$.

As remarked in~\cite[Rmk.~6.4]{MvdK} we have $\usr(R) \leq \asr(R) + 1$ for the absolute stable rank of Magurn--van der Kallen--Vaserstein~\cite{MvdKV}. In the special case where the involution on~$R$ is the identity map (which implies that $R$ is commutative), we have $\usr(R) \leq \asr(R)$. We now give some well-known examples of rings and their unitary stable rank.

\begin{exs} \label{ExsUsr}
    The following examples work for any anti-involution on~$R$ and every choice of $\eps$ and $\Lambda$.
    \begin{enumerate}
        \item Let $R$ be a~commutative Noetherian ring of finite Krull dimension~$d$. Then any $R$-algebra~$A$ that is finitely generated as an~$R$-module satisfies $\usr(A) \leq d + 2$. \cite[Thm.~3.1]{MvdKV}
        \item A~semi-local ring satisfies $\usr(R) \leq 2$. \cite[Thm.~2.4]{MvdKV}
        \item \label{ExsUsrVirtuallyPolycyclic} For a~virtually polycyclic group~$G$ we have $\usr(\Z[G]) \leq h(G) + 3$, where $h(G)$ is the Hirsch length as defined in Example~\ref{ExsSr}~(4). \cite[Thm.~7.3]{CrowleySixt}
    \end{enumerate}
\end{exs}

A~sequence $(v_1, \ldots, v_k)$ of elements in the quadratic module $(M, \lambda, \mu)$ is called \textit{unimodular} if the sequence is unimodular in~$M$ considered as an~$R$-module (see Section~\ref{SecCpxGL}). We say that the sequence is \textit{$\lambda$-unimodular} if there are elements $w_1, \ldots, w_k$ in~$M$ such that $\lambda(w_i, v_j) = \delta_{i, j}$, where $\delta_{i, j}$ denotes the Kronecker delta. We write $\calU(M)$ and $\calU(M, \lambda)$ for the subposet of unimodular and $\lambda$-unimodular sequences in~$M$ respectively.

Note that every $\lambda$-unimodular sequence is in particular unimodular. The following lemma shows that there are cases where the converse is also true.

\begin{lem} \label{LemNonSingular}
    Let the sequence $(v_1, ..., v_k)$ be unimodular in~$M$. If there is a~submodule $N \subseteq M$ containing the $v_i$ such that $\lambda\vert_N$ is non-singular, then the sequence $(v_1, ..., v_k)$ is $\lambda$-unimodular in~$N$.
\end{lem}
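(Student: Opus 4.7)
The plan is to convert the $R$-linear dual sequence witnessing unimodularity into a $\lambda$-dual sequence inside $N$, using the adjoint isomorphism supplied by the non-singularity hypothesis.

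First, since $(v_1, \ldots, v_k)$ is unimodular in $M$, the definition in Section~\ref{SecCpxGL} provides $R$-module homomorphisms $\phi_1, \ldots, \phi_k \colon M \to R$ with $\phi_j(v_i) = \delta_{i,j}$. Restricting along the inclusion $N \hookrightarrow M$ yields functionals $\phi_j|_N \in \mathrm{Hom}_R(N, R)$ still satisfying $\phi_j|_N(v_i) = \delta_{i,j}$, since each $v_i$ lies in $N$.

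Next, I would interpret non-singularity of $\lambda|_N$ in the standard way: the adjoint map $N \to \mathrm{Hom}_R(N, R)$ sending $w$ to $\lambda(w, \cdot)$ is a bijection of sets. (The target consists of honest $R$-linear maps because $\lambda$ is linear in the second slot.) Surjectivity then produces, for each $j$, an element $w_j \in N$ with $\lambda(w_j, v) = \phi_j|_N(v)$ for every $v \in N$. Specialising to $v = v_i$ gives $\lambda(w_j, v_i) = \delta_{i,j}$, and since $\delta$ is symmetric in its indices this is exactly the condition that $(v_1, \ldots, v_k)$ is $\lambda$-unimodular in $N$.

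The only genuine obstacle I anticipate is pinning down the right meaning of ``non-singular'' in this sesquilinear setting: what is actually required is surjectivity of the adjoint $w \mapsto \lambda(w,\cdot)$, not merely injectivity or non-degeneracy in any weaker sense. Once that convention is adopted, the lemma reduces to the direct application above; the sesquilinear subtleties (the adjoint is $R$-antilinear rather than $R$-linear between the underlying $R$-modules $N$ and $\mathrm{Hom}_R(N,R)$) play no role, since one only needs to invert it as a map of sets in order to lift the individual functionals $\phi_j|_N$.
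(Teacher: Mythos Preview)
Your argument is correct and follows the same strategy as the paper: restrict the dual functionals $\phi_j$ to $N$ and use non-singularity of $\lambda|_N$ to realise them as $\lambda(w_j,-)$ for suitable $w_j \in N$. The one difference is the choice of adjoint. The paper takes non-singularity to mean that $v \mapsto \lambda(-,v)$ is an isomorphism $N \to N^*$, so it first finds $w'_i$ with $\lambda(-,w'_i) = \phi_i$ on $N$ and then corrects by setting $w_i := w'_i \eps$, computing $\lambda(w_i,v_j) = \eps\,\overline{\lambda(v_j,w'_i)\eps} = \overline{\phi_i(v_j)} = \delta_{i,j}$. Your use of the other adjoint $w \mapsto \lambda(w,-)$, which lands directly in the $R$-linear dual, is marginally cleaner: it sidesteps the $\eps$-twist and the antilinearity bookkeeping entirely, at the cost of committing to a specific (equivalent) reading of ``non-singular''.
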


\begin{proof}
    Let $(v_1, \ldots, v_k)$ be a~unimodular sequence in $M$. This means that there are maps $f_1, \ldots, f_k \colon R \rightarrow M$ with $f_i(1) = v_i$ and maps $\phi_1, \ldots, \phi_k \colon M \rightarrow R$ with $\phi_j \circ f_i = \delta_{i, j} \cdot \mathds{1}_R$. Note that this implies that $\phi_j(v_i) = \delta_{i, j}$. Now, $\lambda$ being non-singular on~$N$ means that the map
    \begin{eqnarray*}
        N
        & \longrightarrow
        & N^* \\
        v
        & \longmapsto
        & \lambda(-, v)
    \end{eqnarray*}
    is an~isomorphism. Hence, there are $w^{\prime}_1, \ldots, w^{\prime}_k \in N$ such that $\lambda(-, w^{\prime}_i) = \phi_i(-)$ on~$N$. Defining $w_i := w^{\prime}_i \eps$ then yields
        \[\lambda(w_i, v_j) = \lambda(w^{\prime}_i \eps, v_j) = \eps \overline{\lambda(v_j, w^{\prime}_i) \eps} = \eps \overline{\eps} \overline{\phi_i(v_j)} = \delta_{i, j}.\qedhere\]
\end{proof}

We call a~subset~$S$ of a~quadratic module $(M, \lambda, \mu)$ \textit{isotropic} if $\mu(x) = 0$ and $\lambda(x, y) = 0$ for all $x, y \in S$. Let $\calI \calU(M)$ denote the set of $\lambda$-unimodular sequences $(x_1, \ldots, x_k)$ in $M$ such that $x_1, \ldots, x_k$ span an~isotropic direct summand of~$M$. We write $\calH \calU(M)$ for the set of sequences $((x_1, y_1), \ldots, (x_k, y_k))$ such that $(x_1, \ldots, x_k)$, $(y_1, \ldots, y_k) \in \calI \calU(M)$, and $\lambda(x_i, y_j) = \delta_{i, j}$. This can also be thought of as the set of quadratic module maps $H^k \rightarrow M$. We call $\calI\calU(M)$ the \textit{poset of isotropic $\lambda$-unimodular sequences} and $\calH\calU(M)$ the \textit{poset of hyperbolic $\lambda$-unimodular sequences}. We say that the sequence $x = ((x_1, y_1), \ldots, (x_k, y_k)) \in \calH \calU(M)$ is of length~$|x| = k$.

Let $\calM \calU(M)$ be the set of sequences $((x_1, y_1), \ldots, (x_k, y_k)) \in \calO(M \times M)$ satisfying
\begin{enumerate}
    \item $(x_1, \ldots, x_k) \in \calI \calU(M)$,
    \item for each $i$ we have either $y_i = 0$ or $\lambda(x_j, y_i) = \delta_{j, i}$,
    \item the span $\langle y_1, \ldots, y_k \rangle$ is isotropic.
\end{enumerate}
We identify the poset $\calI \calU(M)$ with $\calM \calU(M) \cap \calO(M \times \{0\})$ and the poset $\calH \calU(M)$ with $\calM \calU(M) \cap \calO(M \times (M \setminus \{0\}))$.

In order to phrase the main theorem of this section we introduce the following notion: For an~$(\eps, \Lambda)$-quadratic module $(M, \lambda, \mu)$ define the \textit{Witt index} as
    \[g(M) := \sup\{g \in \N \ | \ \text{there is a~quadratic module $P$ such that } M \cong P \oplus H^g \}.\]

\begin{thm} \label{Thm7.4MvdK}
    The poset $\calH \calU(M)$ is $\bigl \lfloor \frac{g(M) - \usr(R) - 3}{2} \bigr \rfloor$-connected and for every $x \in \calH \calU(M)$ the poset $\calH \calU(M)_x$ is $\bigl \lfloor \frac{g(M) - \usr(R) - |x| - 3}{2} \bigr \rfloor$-connected.
\end{thm}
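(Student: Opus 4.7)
The proof will mimic the inductive scheme from the proof of Theorem~\ref{ThmGLConnectivity}, transported to the quadratic setting and adapted from the hyperbolic case $M = H^n$ treated in~\cite{MvdK}. I will induct on the Witt index $g = g(M)$. Writing $M = P \oplus H^g$, I may assume the hyperbolic summand has a standard basis $e_1, f_1, \ldots, e_g, f_g$, and I may let $EU^{\eps}(H^g, \Lambda)$ act on $M$ via the inclusion $EU^{\eps}(H^g, \Lambda) \leq U(M)$ obtained by fixing $P$ pointwise.

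\textbf{Auxiliary posets.} Rather than attack $\calH\calU(M)$ directly, I will follow Mirzaii--van der Kallen and work simultaneously with the three posets $\calI\calU(M)$, $\calH\calU(M)$, and $\calM\calU(M)$, together with their relative variants $(\cdot)_x$. The pivotal object is $\calM\calU(M)$: for it I apply Lemma~\ref{Lem2.13vdK} with $X$ the subset of pairs in $M \times M$ whose coordinates in the last hyperbolic plane $\langle e_g, f_g \rangle$ vanish, which reduces to a statement about $M' = P \oplus H^{g-1}$ to which the inductive hypothesis applies. Once $\calM\calU(M)$ is controlled, the connectivity of $\calH\calU(M)$ will follow by filtering $\calM\calU(M)$ according to how many of the $y_i$ vanish and comparing with $\calI\calU(M)$ through one further application of Lemma~\ref{Lem2.13vdK}.

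\textbf{Key technical step.} The chief difficulty is the base-case non-emptiness in the boundary Witt index, where one must show that, given a $\lambda$-unimodular sequence $x = (x_1, \ldots, x_k)$ in $M$, there exists a unitary automorphism $\varphi \in U(M)$ fixing the relevant subset of $M$ setwise and carrying (say) $x_1$ into the last hyperbolic plane $\langle e_g, f_g \rangle$. The subtlety is that $\lambda$ may be singular on $M$, so the transitivity condition~$(T_{\usr(R)+1})$ as stated only produces elements of $EU^{\eps}(H^g, \Lambda)$ acting on the hyperbolic summand. My plan is to first invoke the stable range condition~$(S_{\usr(R)})$, in the spirit of the handling of case~(b) in the proof of Theorem~\ref{ThmGLConnectivity}, to arrange that the projection of $x_1$ onto $H^g$ is unimodular; then to upgrade this unimodularity to $\lambda$-unimodularity inside the non-singular submodule $H^g$ via Lemma~\ref{LemNonSingular}; and finally to apply $(T_{\usr(R)+1})$ to move the resulting isotropic vector onto $e_g$ by an element of $EU^{\eps}(H^g, \Lambda) \leq U(M)$. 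This is exactly the place where an arbitrary quadratic module $M$, possibly singular, requires genuinely new input beyond~\cite{MvdK}.

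\textbf{Remaining bookkeeping.} With the base case settled, the inductive step becomes a largely mechanical adaptation of the argument of~\cite[Sec.~7]{MvdK}: one iteratively applies Lemma~\ref{Lem2.13vdK} in its two forms to peel off one hyperbolic plane at a time, and the connectivity bounds $\lfloor(g(M) - \usr(R) - 3)/2\rfloor$ and $\lfloor(g(M) - \usr(R) - |x| - 3)/2\rfloor$ track through exactly as in the proof of Theorem~\ref{ThmGLConnectivity}. As there, the absolute connectivity statement and the relative statements for $\calH\calU(M)_x$ must be proven simultaneously so that the inductive hypothesis covers every invocation of Lemma~\ref{Lem2.13vdK} that arises.
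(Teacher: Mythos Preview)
Your broad outline is right in spirit: the paper does eventually run the Mirzaii--van der Kallen argument for $\calI\calU$, $\calM\calU$, $\calH\calU$ essentially unchanged, once the generalisations of \cite[Lemmas~6.6, 6.8, 6.9, 7.2]{MvdK} are in place. But you underestimate the cost of those generalisations, and your ``key technical step'' as written does not work.

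The main gap is in your plan to use $(\mathrm{S}_{\usr(R)})$ ``in the spirit of case~(b) in Theorem~\ref{ThmGLConnectivity}'' to arrange that the projection of $x_1$ onto $H^g$ is unimodular. In the $\GL$ proof that step is carried out by an elementary shear $f \in \GL(M^\infty)$; here the automorphism must lie in $U(M)$, and an arbitrary shear does not preserve $\lambda$ and $\mu$. The paper spends all of Section~\ref{SecProofLem6.6} on this: it introduces $n \times k$-blocks, proves a block version of Vaserstein's $(\mathrm{S}^k_n)$ (Propositions~\ref{PropSnkForBlocks} and \ref{PropThm1SnkVas}), and then realises the required modification as a composite of orthogonal transvections $\tau(e_i, -\overline{\eps}\tilde p_i, \eps y_i\overline{\eps})$. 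That is the substantive new content beyond \cite{MvdK}, and your proposal does not supply it. Your follow-up invocation of $(\mathrm{T}_{\usr(R)+1})$ is also off: the projection of $x_1$ to $H^g$ need not be isotropic, so you cannot move it onto $e_g$; the paper instead uses Lemma~\ref{LemLambdaUnimodularWitness} to produce a witness $w_1 \in \langle e_g, f_g\rangle$ with $\lambda(w_1, \psi(v_1)) = 1$, which is what the induction actually needs.

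A second gap is that the MvdK Section~7 argument feeds on \cite[Lemma~6.8]{MvdK}, which is about \emph{plain} unimodular sequences --- valid for $H^n$ because unimodular implies $\lambda$-unimodular there (Lemma~\ref{LemNonSingular}), but false in general. The paper therefore proves a separate connectivity theorem (Theorem~\ref{ThmLambdaUnimodularConnectivity}) for the poset $\calO(\calI(P \oplus \langle e_1,\ldots,e_g\rangle,\mu)) \cap \calU(N,\lambda)$, the true quadratic analogue of Theorem~\ref{ThmGLConnectivity}, and needs the auxiliary Lemma~\ref{LemLambdaUnimodularProperties} to control when concatenating $\lambda$-unimodular sequences stays $\lambda$-unimodular. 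Only after this (via Corollary~\ref{CorLem6.8MvdK} and Lemma~\ref{Lem6.9MvdK}) does the MvdK Section~7 machinery become ``largely mechanical''. Your proposal jumps straight to $\calI\calU$, $\calM\calU$, $\calH\calU$ and would get stuck at the analogue of \cite[Lemma~6.9]{MvdK}.
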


For the special case where the quadratic module $M$ is a~direct sum of hyperbolic modules~$H^n$, Theorem~\ref{Thm7.4MvdK} has been proven by Mirzaii--van der Kallen in~\cite[Thm.~7.4]{MvdK}. Galatius--Randal-Williams have treated the case of general quadratic modules over the integers.

In order to prove Theorem~\ref{Thm7.4MvdK} we need the following lemma which extends \cite[Lemma~6.6]{MvdK} to the case of general quadratic modules. Note, however, that the proof is not an~extension of the proof of~\cite[Lemma~6.6]{MvdK} but rather uses techniques of Vaserstein~\cite{VasersteinDim}. A~similar statement has been given by Petrov in~\cite[Prop.~6]{Petrov}. However, Petrov considers hyperbolic modules which are defined over rings with a~pseudoinvolution and only allows $\eps = -\overline 1$. He also states his connectivity range using a~different rank, called the \textit{$\Lambda$-stable rank}, which we shall not discuss.

\begin{lem} \label{Lem6.6MvdK}
    Let $P \oplus H^g$ be a~quadratic module. If $g \geq \usr(R) + k$ and $(v_1, \ldots, v_k) \in \calU(P \oplus H^g, \lambda)$ then there is an~automorphism $\phi \in U(P \oplus H^g)$ such that $\phi(v_1, \ldots, v_k) \subseteq P \oplus H^k$ and the projection of $\phi(v_1, \ldots, v_k)$ to the hyperbolic~$H^k$ is $\lambda$-unimodular.
\end{lem}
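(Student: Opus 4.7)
The plan is to proceed by induction on $k$. The base case $k=1$ reduces a single $\lambda$-unimodular vector into a single hyperbolic summand by a Vaserstein-style manipulation, and the inductive step removes the first vector using an orthogonal decomposition along a newly produced hyperbolic plane.

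For the base case, given a $\lambda$-unimodular $v = p + \sum_{i=1}^g (a_i e_i + b_i f_i) \in P \oplus H^g$ with $g \geq \usr(R) + 1$, we seek $\phi \in U(P \oplus H^g)$ with $\phi(v) \in P \oplus H$ and $\lambda$-unimodular (equivalently unimodular, by Lemma~\ref{LemNonSingular}) projection onto $H$. The $\lambda$-unimodularity witness $w$ with $\lambda(w,v) = 1$ gives, via the hyperbolic basis, a unimodular row in $R$ whose entries are the $a_i, b_i$ together with the image of $p$ under the $P$-component of $w$. Applying the stable range condition $(\mathrm{S}_{\usr(R)})$ in the manner of Vaserstein, we construct elementary unitary transvections of Eichler type that successively zero out the coefficients $a_i, b_i$ for $i > 1$; the transitivity condition $(\mathrm{T}_{\usr(R)+1})$ is then invoked to correct the resulting $\mu$-defect, so that the entire transformation lies in $U(P \oplus H^g)$ rather than merely preserving $\lambda$.

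For the inductive step from $k-1$ to $k$, apply the base case to $v_1$ to obtain $\phi_1 \in U(P \oplus H^g)$ with $\phi_1(v_1) \in P \oplus H$ and unimodular $H$-projection. A further unitary automorphism of $P \oplus H$, extended by the identity on $H^{g-1}$, allows us to assume $\phi_1(v_1) = e_1$, the first basis vector of the first hyperbolic summand. The orthogonal complement $\langle e_1, f_1\rangle^{\perp} \subseteq P \oplus H^g$ is a quadratic module isomorphic to $P \oplus H^{g-1}$. Applying Eichler transvections associated to the hyperbolic pair $\{e_1, f_1\}$, which lie in $U(P \oplus H^g)$ and fix $\phi_1(v_1) = e_1$, we move each of $\phi_1(v_2), \ldots, \phi_1(v_k)$ into this orthogonal complement, producing a $\lambda$-unimodular sequence of length $k-1$ in $P \oplus H^{g-1}$. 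Since $g - 1 \geq \usr(R) + (k-1)$, the inductive hypothesis then yields $\phi_2 \in U(P \oplus H^{g-1})$ placing these vectors into $P \oplus H^{k-1}$ with $\lambda$-unimodular $H^{k-1}$-projection. Composing gives the desired $\phi$, and the $H^k$-projection of $\phi(v_1, \ldots, v_k)$ is $\lambda$-unimodular because $e_1$ together with a $\lambda$-unimodular sequence in an orthogonal hyperbolic copy extends to a $\lambda$-unimodular sequence in $H^k$.

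The main obstacle is the base case $k=1$ in the presence of a nontrivial module $P$. In the setting of \cite{MvdK} one has $P = 0$, and the hyperbolic coefficients $(a_i, b_i)$ already form a unimodular row in $R^{2g}$, so the standard elementary-unitary-matrix reductions apply directly; here the $\lambda$-unimodularity witness $w$ can mix contributions from $P$ and $H^g$, so $(a_i, b_i)$ need not be unimodular in isolation and must be reduced together with a contribution coming from $P$. Vaserstein's iterative reduction of unimodular rows is exactly the tool adapted to this situation, and the delicate point throughout is the interaction between the sesquilinear form $\lambda$ and the non-linear $\mu$: every reduction step produces a potential $\mu$-defect whose correction requires the transitivity condition $(\mathrm{T}_{\usr(R)+1})$ built into the unitary stable rank.
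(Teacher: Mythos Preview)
Your inductive step contains a genuine gap. You claim that after arranging $\phi_1(v_1)\in P\oplus H$ with unimodular $H$-projection, a further unitary automorphism of $P\oplus H$ lets you assume $\phi_1(v_1)=e_1$. This fails for two reasons. First, the hypothesis is only $(v_1,\ldots,v_k)\in\calU(P\oplus H^g,\lambda)$, so $\mu(v_1)$ is completely unconstrained; since $\mu(e_1)=0$ and unitary automorphisms preserve~$\mu$, you cannot reach $e_1$ unless $\mu(v_1)=0$. Second, even in the isotropic case, transitivity of $U(P\oplus H)$ on $\lambda$-unimodular isotropic vectors would require something like $g(P\oplus H)\geq\usr(R)+1$, which is not available (and in the paper this transitivity, Corollary~\ref{CorThm8.1MvdKV}, is in fact a \emph{consequence} of the lemma you are trying to prove). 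The subsequent claim that Eichler transvections associated to $\{e_1,f_1\}$ fix $e_1$ and move the remaining $v_i$ into $\langle e_1,f_1\rangle^\perp$ is also not correct as stated: $\tau(f_1,u,x)$ does not fix $e_1$, and $\tau(e_1,u,x)$ alone cannot kill the $f_1$-coefficient of the other vectors.

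Your base case is also too sketchy, and the role you assign to $(\mathrm{T}_{\usr(R)+1})$ is wrong: orthogonal transvections $\tau(e,u,x)$ already lie in $U(M)$ and preserve~$\mu$ by construction, so there is no ``$\mu$-defect'' to correct. The transitivity condition enters the argument elsewhere, not as a repair mechanism for transvections.

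The paper takes a different route that avoids induction on~$k$. It first develops a notion of unimodular $n\times k$-\emph{block} (a matrix over $R$ augmented by a row of antilinear maps coming from $\lambda(-,p_j)$) and proves a Vaserstein-type reduction for such blocks. This yields elements $\tilde p_1,\ldots,\tilde p_g\in P$ such that the composition $\tilde\phi$ of transvections $\tau(e_j,-\overline\eps\tilde p_j,\ldots)$ makes the projection of $\tilde\phi(v_1,\ldots,v_k)$ to the \emph{whole} of $H^g$ unimodular. At that point one is in the purely hyperbolic situation and can invoke \cite[Lemma~6.6]{MvdK} directly to land in $k$ hyperbolic summands; a final cancellation argument (Proposition~\ref{PropCancellationH}) straightens the hyperbolic basis. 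The block machinery is precisely what handles the mixing of $P$- and $H^g$-contributions that you correctly identify as the main obstacle.
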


The following section contains the necessary foundations as well as the proof of Lemma~\ref{Lem6.6MvdK}.

%%%%%%%%%%%%%%%%%%%%%%%%%%%%%%%%%%%%%%%%%%%%%%%%%%%%%%%%%%%%%%%%%%%%%%%%%%%%%%%%%%%%%%%%%%%%%%%%%%%%%%%%%%%%%%%%%%%%%%%%%%%%%%%%%%%%%%%%%%%%%%%%%%%%%%%%
\subsection{Proof of Lemma~\ref{Lem6.6MvdK}} \label{SecProofLem6.6}
%%%%%%%%%%%%%%%%%%%%%%%%%%%%%%%%%%%%%%%%%%%%%%%%%%%%%%%%%%%%%%%%%%%%%%%%%%%%%%%%%%%%%%%%%%%%%%%%%%%%%%%%%%%%%%%%%%%%%%%%%%%%%%%%%%%%%%%%%%%%%%%%%%%%%%%%

Following~\cite{VasersteinDim} an~$(n+k) \times k$-matrix~$A$ is called \textit{unimodular} if it has a~left inverse. Note that the matrix~$A$ is unimodular if and only if the matrix~$CA$ is unimodular for any invertible matrix~$C \in \GL_{n+k}(R)$. A~ring~$R$ is said to satisfy the condition~$(\mathrm{S}^k_n)$ if for every unimodular $(n + k) \times k$-matrix~$A$, there exists an~element~$r \in R^{n + k - 1}$ such that
\[\left(\begin{tabular}{c|c}
    $\mathds{1}_{n + k - 1}$ & $r^{\top}$ \\
    \hline
    $0$&$1$
\end{tabular}\right)
\cdot A =
\begin{pmatrix} B \\ u \end{pmatrix},\]
where the matrix~$B$ is unimodular and $u$ is the last row of~$A$.

Note that condition~$(\mathrm{S}^1_n)$ is the same as condition~$(S_n)$. Furthermore, Vaserstein shows in~\cite[Thm.~3\,$^\prime$]{VasersteinDim} shows that the condition~$(\mathrm{S}^k_n)$ is equivalent to the condition~$(\mathrm{S}_n)$.

%%%%%%%%%%%%%%%%%%%%%%%%%%%%%%%%%%%%%%%%%%%%%%%%%%%%%%%%%%%%%%%%%%%%%%%%%%%%%%%%%%%%%%%%%%%%%%%%%%%%%%%%%%%%%%%%%%%%%%%%%%%%%%%%%%%%%%%%%%%%%%%%%%%%%%%%
\subsubsection{$n \times k$-Blocks}
%%%%%%%%%%%%%%%%%%%%%%%%%%%%%%%%%%%%%%%%%%%%%%%%%%%%%%%%%%%%%%%%%%%%%%%%%%%%%%%%%%%%%%%%%%%%%%%%%%%%%%%%%%%%%%%%%%%%%%%%%%%%%%%%%%%%%%%%%%%%%%%%%%%%%%%%

Given a~quadratic $R$-module~$M$ we define an~\textit{$n \times k$-block~$A$ for~$M$} to be an~$n \times k$-matrix~$\left(r_{i, j}\right)_{i, j}$ with entries in~$R$ together with $k$~anti-linear maps $f_1, \ldots, f_k \colon M \rightarrow R$. We will write this data as
\[A = \begin{pmatrix}
    r_{1,1} & \ldots & r_{1,k} \\
    \vdots & & \vdots \\
    r_{n,1} & \ldots & r_{n,k} \\
    f_1 & \ldots & f_k
\end{pmatrix}.\]
Note that with this notation an~$n \times k$-block has in fact $n+1$ rows. We refer to the row of maps $(f_1, \ldots, f_k)$ as the \textit{last row} of~$A$. Given an~$(n+1) \times (n+1)$-matrix of the form
\[\left(\begin{tabular}{ccc|c}
    $s_{1,1}$ & $\ldots$ & $s_{1,n}$ & $m_1$ \\
    $\vdots$ &  & $\vdots$ & $\vdots$ \\
    $s_{n,1}$ & $\ldots$ & $s_{n,n}$ & $m_{2g}$ \\
    \hline
    $0$ & $\ldots$ & $0$ & $s$
\end{tabular}\right),\]
where $s, s_{i,j} \in R$, $m_i \in M$, we can act with it from the left on an~$n \times k$-block~$A$ by matrix multiplication, where we define
    \[m_i \cdot f_j := f_j(m_i) \text{ and } s \cdot f_j := f_j( - \cdot \overline s).\]
We can act from the right on the block~$A$ with a~$k \times k$-matrix with entries in~$R$ again by matrix multiplication, where we define $f_j \cdot r$ to send an~element~$m \in M$ to $f_j(m) \cdot r$ for $r \in R$.

\begin{defn}
    We say that an~$n \times k$-block $A$ is \textit{unimodular} if there is a~$k \times (n+1)$-matrix~$A_L$ of the form
    \[\begin{pmatrix}
        r^{\prime}_{1,1} & \ldots & r^{\prime}_{1,n} & m^{\prime}_1 \\
        \vdots &  & \vdots & \vdots \\
        r^{\prime}_{k,1} & \ldots & r^{\prime}_{k,n} & m^{\prime}_k
    \end{pmatrix}\]
    with $r^{\prime}_{i, j} \in R$ and $m^{\prime}_i \in M$, such that $A_L \cdot A = \mathds{1}_k$, where the multiplication is again given by matrix multiplication, with $m^{\prime}_i \cdot f_j$ as defined above.
\end{defn}

Note that the $n \times k$-block~$A$ is unimodular if and only if one of the following blocks is unimodular:
\[\left(\begin{tabular}{c|c}
        $1$ & $0$ \\
        \hline
        $0$ & \multirow{4}{*}{$A$} \\
        $\vdots$ & \\
        $0$ & \\
        $f$ &
    \end{tabular}\right), \
    \left(\begin{tabular}{c|c}
            $\mathds{1}_n$ & $v^{\top}$ \\
            \hline
            $0$ & $1$ \\
        \end{tabular}\right) \cdot A, \
    \left(\begin{tabular}{c|c}
            $C$ & $0$ \\
            \hline
            $0$ & $1$ \\
    \end{tabular}\right) \cdot A, \ \text{ or }
    A \cdot
    \left(\begin{tabular}{c|c}
        $1$ & $v$ \\
        \hline
        $0$ & $\mathds{1}_n$ \\
    \end{tabular}\right),\]
    for a~vector~$v \in R^n$ and a~matrix~$C \in \GL_n(R)$.

\begin{defn}
    An~$n \times k$-block~$A$ for~$M$ is \textit{matrix reducible} if there is a~vector $m \in M^n$ such that
    \[\left(\begin{tabular}{c|c}
        $\mathds{1}_n$ & $m^{\top}$ \\
        \hline
        $0$ & $1$ \\
    \end{tabular}\right)
    \cdot A =
    \begin{pmatrix} B \\ u \end{pmatrix},\]
    where the $n \times k$-matrix~$B$ is unimodular and $u$ is the last row of the block~$A$.
\end{defn}

\begin{prop}\label{PropSnkForBlocks}
    If $k + \sr(R) \leq n + 1$ then every unimodular $n \times k$-block~$A$ is matrix reducible.
\end{prop}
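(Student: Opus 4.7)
The plan is to reduce the statement to Vaserstein's matrix form of the stable range condition $(\mathrm{S}^k_n)$, which by the quoted equivalence $(\mathrm{S}^k_n) \Leftrightarrow (\mathrm{S}_n)$ is available throughout the range we need. The key idea is to convert the block $A$ into an honest matrix over $R$ by evaluating each anti-linear map $f_j$ in the last row of $A$ on the elements of $M$ appearing in the last column of a~chosen left inverse $A_L$, and then to exploit the anti-linearity of the $f_j$ to re-interpret the resulting elementary row operations over $R$ as a~single substitution of the form $r_{i,j} \mapsto r_{i,j} + f_j(m_i)$.

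In detail, choose a~left inverse $A_L$ with $A_L \cdot A = \mathds{1}_k$, and write its last column as $(m_1', \ldots, m_k')^{\top} \in M^k$ and its first $n$ columns as $(r_{i,j}')$. Form the $(n+k) \times k$-matrix $\tilde A$ over $R$ whose first $n$ rows are the $r_{i,j}$ and whose final $k$ rows have entries $a_{i,j} := f_j(m_i')$. Expanding $A_L \cdot A = \mathds{1}_k$ gives $\sum_l r_{i,l}' r_{l,j} + a_{i,j} = \delta_{i,j}$, which says exactly that the $k \times (n+k)$-matrix $(r_{i,l}' \mid \mathds{1}_k)$ is a~left inverse to $\tilde A$, so $\tilde A$ is a~unimodular matrix over $R$ in Vaserstein's sense. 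Now apply $(\mathrm{S}^k_{n-i})$ for $i = 0, 1, \ldots, k-1$ in succession, each time peeling off the current last row, to end with a~unimodular $n \times k$-matrix $B$ over $R$. Each application is legitimate because $(\mathrm{S}^k_{n-i}) \Leftrightarrow (\mathrm{S}_{n-i})$ holds under the hypothesis $\sr(R) \leq n-k+1$.

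It remains to argue that $B$ has the shape required by matrix reducibility. An~induction on the peel number shows that the current last row always has the form $(f_j(m))_{j=1,\ldots,k}$ for some $m \in M$, because the anti-linearity identity $r \cdot f_j(m) = f_j(m \cdot \overline{r})$ together with additivity of $f_j$ packages every finite sum $\sum_l r_l \cdot f_j(m_l)$ into $f_j(\sum_l m_l \cdot \overline{r_l})$. Applying the same packaging to the cumulative correction added to the $i$-th of the top $n$ rows of $\tilde A$ over the full sequence of $k$ peels produces a~single element $m_i \in M$ such that the $i$-th row of $B$ equals $(r_{i,j} + f_j(m_i))_{j}$. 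In other words, $B$ is obtained from $(r_{i,j})$ by the elementary block matrix associated to $(m_1, \ldots, m_n) \in M^n$, which is the desired matrix reducibility of~$A$.

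The main obstacle I expect is the bookkeeping: one must carefully track that the iterated Vaserstein reductions preserve the ``$(f_j(m))_j$ shape'' of the row being peeled, and that the accumulated modifications to the top $n$ rows collapse into a~single evaluation $f_j(m_i)$ per row. Once anti-linearity is invoked systematically to perform these collapses, no ideas beyond Vaserstein's matrix argument are required.
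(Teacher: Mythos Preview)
Your argument is correct, and it takes a genuinely different route from the paper's.

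The paper proves the proposition by induction on~$k$, essentially re-running Vaserstein's proof of $(\mathrm{S}^k_n) \Leftrightarrow (\mathrm{S}_n)$ in the block setting: it uses $(\mathrm{S}_n)$ on the first column to create a pivot, applies the moves of Proposition~\ref{PropMoves} to put the block into the form
\[
\left(\begin{tabular}{c|c}
    $1$ & $0$ \\
    \hline
    $0$ & \multirow{4}{*}{$A^{\prime\prime}$} \\
    $\vdots$ & \\
    $0$ & \\
    $f_1$ &
\end{tabular}\right),
\]
and then invokes the induction hypothesis on the $(n-1)\times(k-1)$-block~$A^{\prime\prime}$. Your approach instead converts the block to an honest $(n+k)\times k$-matrix $\tilde A$ over~$R$ by evaluating the $f_j$ on the module entries of a fixed left inverse, and then quotes $(\mathrm{S}^k_{n-i})$ for $i=0,\ldots,k-1$ to peel off the bottom $k$ rows of $\tilde A$, using anti-linearity to repackage all corrections into a single $f_j(m_i)$ per row.

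What each approach buys: yours is shorter and outsources the combinatorics to Vaserstein's theorem, at the cost of a bookkeeping lemma (that the row being peeled always has the shape $(f_j(m))_j$, and that the accumulated corrections to the top~$n$ rows collapse likewise). The paper's approach is more self-contained and, via Proposition~\ref{PropMoves}, sets up exactly the move-invariance machinery that is reused in the proof of the subsequent Proposition~\ref{PropThm1SnkVas}. Your bookkeeping claims are correct: the key identity $r\cdot f_j(m)=f_j(m\cdot\overline r)$ follows from the anti-linearity $f_j(m\cdot r)=\overline r\, f_j(m)$, and additivity of~$f_j$ then gives the collapse $\sum_l r_l f_j(m_l)=f_j\bigl(\sum_l m_l\overline{r_l}\bigr)$ as you state.
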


Matrix reducibility is preserved under certain operations as the following proposition shows (cf.\ proof of~\cite[Thm.~3\,$^\prime$]{VasersteinDim}).

\begin{prop} \label{PropMoves}
    Let $A$ be an~$n \times k$-block for $M$. Then $A$ is matrix reducible if and only if the block obtained from~$A$ by doing any of the following moves is matrix reducible.
    \begin{enumerate}
        \item Multiply on the left by a~matrix of the form
        \[\left(\begin{tabular}{c|c}
            $\mathds{1}_n$ & $v^{\top}$ \\
            \hline
            $0$&$1$
        \end{tabular}\right),\]
        for an~element $v \in M^n$.
        \item Multiply on the left by a~matrix of the form
        \[\left(\begin{tabular}{c|c}
            $C$ & $0$ \\
            \hline
            $0$&$1$
        \end{tabular}\right),\]
        for a~matrix $C \in \GL_n(R)$.
        \item Multiply on the right by a~matrix $D \in \GL_k(R)$.
    \end{enumerate}
\end{prop}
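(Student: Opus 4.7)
The plan is to verify each move independently, and the $\Leftarrow$ direction will come for free because each move is invertible: the matrix $E_v := \left(\begin{tabular}{c|c} $\mathds{1}_n$ & $v^{\top}$ \\ \hline $0$ & $1$ \end{tabular}\right)$ has inverse $E_{-v}$; the matrix $D_C := \left(\begin{tabular}{c|c} $C$ & $0$ \\ \hline $0$ & $1$ \end{tabular}\right)$ has inverse $D_{C^{-1}}$ since $C \in \GL_n(R)$; and $D \in \GL_k(R)$ is invertible by hypothesis. So it is enough to show that if $A$ is matrix reducible, then so is the result of applying each move.

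The key structural observation threading through all three cases is that each of the three operations preserves the last row of $A$ in the sense required by the definition: left multiplication by $E_v$ or $D_C$ leaves the last row untouched because these matrices have last row $(0, \ldots, 0, 1)$, while right multiplication sends the last row $u$ of $A$ to $uD$, which is exactly the last row of $AD$. Now suppose $E_m \cdot A = \binom{B}{u}$ witnesses matrix reducibility of $A$, with $B$ an unimodular $n \times k$ matrix over $R$ and $u$ the last row of~$A$. For move~(3), one computes $E_m \cdot (AD) = \binom{BD}{uD}$, and $BD$ is unimodular since $D \in \GL_k(R)$. For move~(1), the matrices $\{E_v\}_{v \in M^n}$ form an abelian group under composition (since $E_{v_1} \cdot E_{v_2} = E_{v_1 + v_2}$ by direct computation), so taking $m' = m - v$ gives $E_{m'} \cdot (E_v \cdot A) = E_m \cdot A = \binom{B}{u}$, and $u$ is also the last row of $E_v \cdot A$. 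For move~(2), a~direct block computation yields $D_C \cdot E_v = E_{v'} \cdot D_C$ where $v'^{\top} = C \cdot v^{\top}$; hence $E_{Cm} \cdot (D_C \cdot A) = D_C \cdot E_m \cdot A = D_C \cdot \binom{B}{u} = \binom{CB}{u}$, and $CB$ is unimodular as the product of an invertible and a unimodular matrix.

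There isn't really a serious obstacle here: the proposition is a~book-keeping exercise, the point being just that the definitions interact well with the three types of row/column operations. The only minor subtlety is that $E_v$ has $M$-valued entries in its last column, so one has to trust that the paper's conventions for matrix multiplication between matrices with $R$-entries and $n \times k$-blocks make expressions such as $D_C \cdot E_v$ and $E_m \cdot (E_v \cdot A)$ well-defined and associative; this is built into the setup given just before the proposition, so no additional verification is needed.
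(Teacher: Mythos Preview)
Your proof is correct and follows essentially the same approach as the paper: reduce to one direction by invertibility of the moves, then handle each case by the same block-matrix identities (the group law $E_{v_1}E_{v_2}=E_{v_1+v_2}$ for move~(1), the commutation $D_C E_m = E_{Cm} D_C$ for move~(2), and unimodularity of $BD$ for move~(3)). The only cosmetic difference is that the paper phrases move~(2) via the conjugation $D_C E_m D_{C^{-1}} = E_{Cm}$ rather than the equivalent commutation relation you wrote.
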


\begin{proof}
    Note that each of the above moves may be inverted by a~move of the same type. It is therefore enough to show that if $A$ is matrix reducible then so is the block obtained from~$A$ by doing one of the above moves. Let $m \in M^n$ be the sequence showing that the block~$A$ is matrix reducible, i.e.\ we have
    \[\left(\begin{tabular}{c|c}
        $\mathds{1}_n$ & $m^{\top}$ \\
        \hline
        $0$ & $1$ \\
    \end{tabular}\right)
    \cdot A =
    \begin{pmatrix} B \\ u \end{pmatrix},\]
    where the $n \times k$-matrix~$B$ is unimodular.

    Statement (1) follows from the fact that multiplying two of these matrices with last column $(v_1, 1)$ and $(v_2, 1)$ respectively yields another matrix of this form whose last column is given by $(v_1 + v_2, 1)$.

    To show (2) we can write
    \[\left(\begin{tabular}{c|c}
        $C$ & $0$ \\
        \hline
        $0$&$1$
    \end{tabular}\right)
    \cdot A=
    \left[\left(
    \begin{tabular}{c|c}
        $C$ & $0$ \\
        \hline
        $0$&$1$
    \end{tabular}\right)
    \cdot
    \left(\begin{tabular}{c|c}
        $\mathds{1}_n$ & $m^{\top}$ \\
        \hline
        $0$&$1$
    \end{tabular}\right)
    \cdot
    \left(\begin{tabular}{c|c}
        $C^{-1}$ & $0$ \\
        \hline
        $0$&$1$
    \end{tabular}\right)\right]
    \cdot
    \left[\left(\begin{tabular}{c|c}
        $C$ & $0$ \\
        \hline
        $0$&$1$
    \end{tabular}\right)
    \cdot
    \begin{pmatrix} B \\ u \end{pmatrix} \right],\]
    where the product of the first three matrices is
    \[\left(\begin{tabular}{c|c}
        $\mathds{1}_n$ & $C m^{\top}$ \\
        \hline
        $0$&$1$
    \end{tabular}\right)\]
    and the product of the last two matrices is $\begin{pmatrix} CB \\ u \end{pmatrix}$. Note that multiplying a~unimodular matrix by an~invertible matrix on either side yields again a~unimodular matrix. Thus, $-C m^{\top}$ is the corresponding sequence for the block
    \[\left(\begin{tabular}{c|c}
        $C$ & $0$ \\
        \hline
        $0$&$1$
    \end{tabular}\right)
    \cdot A.\]

    For (3) note that multiplying the matrix $\begin{pmatrix} B \\ u \end{pmatrix}$ on the right by~$D$ yields a~matrix~$\begin{pmatrix} BD \\ u^{\prime} \end{pmatrix}$. As noted in part~(2), the matrix $BD$ is also unimodular so $m$ is also the sequence to show that the block $AD$ is matrix reducible.
\end{proof}

\begin{proof}[Proof of Proposition~\ref{PropSnkForBlocks}]
    Let us write the unimodular $n \times k$-block as
    \[A =
    \begin{pmatrix}
        r_{1,1} & \ldots & r_{1,k} \\
        \vdots & & \vdots \\
        r_{n,1} & \ldots & r_{n,k} \\
        f_1 & \ldots & f_k
    \end{pmatrix}.\]
    The proof is by induction on~$k$.

    Let $k=1$. Since the block~$A$ is unimodular, there is a~left inverse $A_L := ((r^{\prime}_1)^{\top}, \ldots, (r^{\prime}_n)^{\top}, (m^{\prime})^{\top})$ of~$A$ for vectors $r^{\prime}_i \in R^k$ and $m^{\prime} \in M^k$. Hence, the sequence $(r_{1,1}, \ldots, r_{n,1}, f_1(m^{\prime}_1)) \in R^{n+1}$ is unimodular by construction and since $n + 1 > \sr(R)$ there are $v_1, \ldots, v_n \in R$ such that the sequence
        \[(r_{1,1} + v_1 f_1(m^{\prime}_1), \ldots, r_{n,1} + v_n f_1(m^{\prime}_1))\]
    is unimodular. Defining $m_i := m^{\prime} \cdot \overline v_i$ then yields the base case.

    Let us assume that the statement is true for $k-1$ and consider the case $k > 1$. Since $A$ is a~unimodular block, in particular the first column~$(r_1)^{\top}$ is unimodular having a~left inverse $(r^{\prime}_{1,1}, \ldots, r^{\prime}_{1, n}, m^{\prime}_1)$ which is the first row of the left inverse~$A_L$ of~$A$. Hence, the sequence $(r_{1,1}, \ldots, r_{n,1}, f_1(m^{\prime}_1))$ is unimodular. By assumption we have $n+1 > \sr(R)$, so there is a~vector $v := (v_1, \ldots, v_n) \in R^n$ such that the sequence
        \[ r_1^{\prime} := r_{1,1} + v_1 f_1(m^{\prime}_1), \ldots, r_{n,1} + v_n f_1(m^{\prime}_1) \in R^n\]
    is unimodular. Thus, there is an~$C \in \GL_n(R)$ such that $C r_1^{\prime} = (1, 0, \ldots, 0)$. Consider the block
    \[A_1 :=
    \left(\begin{tabular}{c|c}
        $C$ & $0$ \\
        \hline
        $0$ & $1$
    \end{tabular}\right)
    \cdot
    \left(\begin{tabular}{c|c}
        $\mathds{1}_n$ & $v^{\top}$ \\
        \hline
        $0$ & $1$
    \end{tabular}\right)
    \cdot A.\]
    Then $A_1$ is of the form
    \[\left(\begin{tabular}{c|c}
        $1$ & $u^{\prime}$ \\
        \hline
        $0$ & \multirow{4}{*}{$A^{\prime}$} \\
        $\vdots$ &  \\
        $0$ &  \\
        $f_1$ &
    \end{tabular} \right)\]
    for an~$(n-1) \times (k-1)$-block~$A^{\prime}$ for~$M$. Now, by Proposition~\ref{PropMoves} the block~$A$ is matrix reducible if and only if the block~$A_1$ is matrix reducible. Proposition~\ref{PropMoves} also implies that this is equivalent to the block
    \[A_2 := A_1 \cdot
    \left(\begin{tabular}{c|c}
        $1$ & $-u^{\prime}$ \\
        \hline
        $0$ & $\mathds{1}_n$
    \end{tabular}\right)
    =
    \left(\begin{tabular}{c|c}
        $1$ & $0$ \\
        \hline
        $0$ & \multirow{4}{*}{$A^{\prime \prime}$} \\
        $\vdots$ & \\
        $0$ & \\
        $f_1$ &
    \end{tabular}\right)\]
    being matrix reducible. Therefore, it is enough to show that the block~$A_2$ is matrix reducible. Since the block~$A$ is unimodular, so is $A_2$ as remarked above. This implies that the block~$A^{\prime \prime}$ is unimodular as well. Hence, by the induction hypothesis there is a~vector $m \in M^{n-1}$ such that
    \[\left(\begin{tabular}{c|c}
        $1$ & $-u^{\prime}$ \\
        \hline
        $0$ & $\mathds{1}_n$
    \end{tabular}\right)
    \cdot A^{\prime \prime} =
    \begin{pmatrix}
        \tilde B \\
        \tilde u
    \end{pmatrix},\]
    where the matrix~$\tilde B$ is unimodular and $\tilde u$ is the last row of~$A^{\prime \prime}$. Thus,
    \[\left(\begin{tabular}{c|c|c}
        $1$ & $0$ & $0$ \\
        \hline
        \multirow{2}{*}{$0$} & \multirow{2}{*}{$\mathds{1}_{n-1}$} & \multirow{2}{*}{$m^{\top}$} \\
        & & \\
        \hline
        $0$ & $0$ & $1$
    \end{tabular} \right)
    \cdot A_2 =
    \begin{pmatrix}
        1 & 0 \\
        * & \tilde B \\
        * & \tilde u
    \end{pmatrix},\]
    where the matrix $\begin{pmatrix} 1 & 0 \\ * & \tilde B \end{pmatrix}$ is unimodular since $\tilde B$ is unimodular.
\end{proof}

The next proposition is an~extension of~\cite[Thm.~1]{VasersteinDim}.

\begin{prop}\label{PropThm1SnkVas}
    Let $k + \sr(R) = n + 1$ and $l>0$ then for any unimodular $(n+l)\times k$-block~$A$ there is a~vector $m \in M^n$ such that
    \[\left(\begin{tabular}{c|c|c}
        \multirow{2}{*}{$\mathds{1}_n$} & \multirow{2}{*}{$0$} & \multirow{2}{*}{$m^{\top}$} \\
        & & \\
        \hline
        \multirow{2}{*}{$0$} & \multirow{2}{*}{$\mathds{1}_l$} & \multirow{2}{*}{$0$} \\
        & & \\
        \hline
        $0$ & $0$ & $1$
    \end{tabular}\right)
    \cdot A = \begin{pmatrix} B \\ u \end{pmatrix},\]
    where the $(n + l) \times k$-matrix~$B$ is unimodular and $u$ is the last row of the block~$A$.
\end{prop}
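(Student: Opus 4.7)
The plan is to prove Proposition~\ref{PropThm1SnkVas} by induction on $k$, paralleling the proof of Proposition~\ref{PropSnkForBlocks} and extending Vaserstein's matrix argument from \cite[Thm.~1]{VasersteinDim} to the block setting.

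For the base case $k = 1$, where $\sr(R) = n$, unimodularity of the $(n+l) \times 1$-block $A = (r_1, \ldots, r_{n+l}, f)^\top$ gives a relation $\sum r'_i r_i + f(m') = 1$. Setting $y := f(m') \in R$, the vector $(r_1, \ldots, r_{n+l}, y) \in R^{n+l+1}$ is unimodular, and Vaserstein's Theorem~1 (which is the matrix analog of the present statement) supplies $t_1, \ldots, t_n \in R$ such that $(r_1 + t_1 y, \ldots, r_n + t_n y, r_{n+1}, \ldots, r_{n+l})$ is unimodular in $R^{n+l}$. Setting $m_i := m' \overline{t_i}$, antilinearity of $f$ gives $f(m_i) = t_i f(m') = t_i y$, so the matrix displayed in the proposition, when applied to $A$, produces this unimodular vector as its matrix part.

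For the inductive step $k \geq 2$, the $k = 1$ case applies to the first column of $A$ (which is itself a unimodular $(n+l) \times 1$-block, since $A$ has a left inverse). Combined with row operations from $\GL_{n+l}(R)$ and column operations from $\GL_k(R)$, justified by Proposition~\ref{PropMoves}, one transforms $A$ so that its first column becomes $(1, 0, \ldots, 0)^\top$ with vanishing function entry, and the first row vanishes beyond the diagonal. The remaining $(n+l-1) \times (k-1)$-sub-block is then unimodular and satisfies the hypothesis of the proposition with $n$ replaced by $n-1$, since $(k-1) + \sr(R) = n = (n-1)+1$. The induction hypothesis yields $m'' \in M^{n-1}$, which extends via the inverse of the preparatory transformations to a vector $m \in M^n$ realising the required operation on~$A$.

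The main obstacle is the scalar claim in the base case: while antilinearity cleanly reduces the block statement to one about unimodular vectors in $R^{n+l+1}$, producing the $t_i$ while leaving $r_{n+1}, \ldots, r_{n+l}$ unchanged is genuinely stronger than $(\mathrm{S}_n)$. In the commutative case this follows by passing to the quotient ring $R/(r_{n+1}, \ldots, r_{n+l})$, where $\sr$ can only decrease and the images $(\bar r_1, \ldots, \bar r_n, \bar y)$ remain unimodular, so that $(\mathrm{S}_n)$ in the quotient plus lifting finishes the job. The general (possibly non-commutative) case is Vaserstein's original theorem and requires a more delicate iteration of the stable range condition. A secondary technical point is checking that the row and column operations in the inductive step compose so that the final transformation has precisely the shape prescribed by the proposition; this amounts to an analog of Proposition~\ref{PropMoves} for the restricted class of reducing matrices appearing here.
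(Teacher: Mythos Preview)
Your base case $k=1$ is fine: antilinearity of $f$ cleanly reduces the question to the scalar statement that for a unimodular vector $(r_1,\ldots,r_{n+l},y)\in R^{n+l+1}$ with $n=\sr(R)$ one can add multiples of $y$ to $r_1,\ldots,r_n$ only and obtain a unimodular vector in $R^{n+l}$; this is indeed Vaserstein's result.

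The inductive step, however, has a genuine gap, and it is precisely the issue you label ``secondary''. After your $k=1$ operation $E_1$ on the first column you multiply by some $E_2=\bigl(\begin{smallmatrix}C_1&0\\0&1\end{smallmatrix}\bigr)$ with $C_1\in\GL_{n+l}(R)$ to pivot the first column, then apply the inductive hypothesis via some $E_3$ supported in rows $2,\ldots,n$. Unimodularity of the matrix part is then established for $E_3E_2E_1\cdot A$, and to recover the prescribed shape you must strip off $C_1$, leaving a transformation $\bigl(\begin{smallmatrix}I_{n+l}&\bar m^{(1)}+C_1^{-1}\bar m''\\0&1\end{smallmatrix}\bigr)$. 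But $C_1$ was an arbitrary element of $\GL_{n+l}(R)$ and need not respect the $n/l$ block structure, so $C_1^{-1}\bar m''$ will in general have nonzero entries in the last $l$ coordinates. Proposition~\ref{PropMoves} does not help here: it concerns unrestricted matrix reducibility (any $m\in M^{n+l}$), and there is no obvious analogue for the restricted class of transformations you need. One cannot simply choose $C_1$ block-triangular either, since clearing the last $l$ entries of the first column via a block-upper-triangular matrix would force those entries to already vanish.

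The paper sidesteps this difficulty by separating the two concerns. It first applies Proposition~\ref{PropSnkForBlocks} once to the whole block, obtaining some $\tilde m\in M^{n+l}$ (no restriction) making the full matrix part $B_1$ unimodular. It then works purely with ordinary matrices: iterating the condition $(\mathrm{S}^k_{n+l-k})$ on $B_1$ shows that in fact the top $n$ rows $B'$ can be made unimodular. The resulting composite transformation $C$ is upper-triangular but not yet of the required shape; the key step is a two-stage cleanup. First, since $B'$ depends only on the top $n$ rows of the product, one may freely alter the last $l+1$ rows of $C$ and thus replace its middle block by $I_l$ and kill the $M$-entries there. Second, since the full $(n+l)\times k$ matrix part is now unimodular (its top $n$ rows already are), arbitrary invertible row operations on the first $n+l$ rows preserve this, so one may kill the upper-middle $R$-block of $C$ as well. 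This cleanup is the substantive content of the proof and is what your inductive scheme would need to replicate to close the gap.
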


\begin{proof}
    Since $A$ is a~unimodular $(n+l)\times k$-block by Proposition~\ref{PropSnkForBlocks} there is an~element $\tilde m \in M^{n+l}$ such that
    \[\left(\begin{tabular}{c|c}
        $\mathds{1}_{n+l}$ & $\tilde m^{\top}$ \\
        \hline
        $0$ & $1$
    \end{tabular}\right)
    \cdot A = \begin{pmatrix} B_1 \\ u_1 \end{pmatrix},\]
    where the $(n+l) \times k$-matrix~$B_1$ is unimodular and $u_1 = u$ is the last row of the block~$A$. Since $l>0$ and $n + l - k \geq \sr(R)$ we can now apply the condition $(\mathrm{S}^k_{n+l-k})$ to the unimodular matrix~$B_1$ to get an~element $v \in R^{n+l-1}$ such that
    \[\left(\begin{tabular}{c|c}
        $\mathds{1}_{n+l-1}$ & $v^{\top}$ \\
        \hline
        $0$ & $1$\\
    \end{tabular}\right)
    \cdot B_1 = \begin{pmatrix} B_2 \\ u_2 \end{pmatrix},\]
    where the $(n+l-1)\times k$-matrix~$B_2$ is unimodular and $u_2$ is the last row of the matrix~$B_1$. Together we get
    \[\left(\begin{tabular}{c|c|c}
        \multirow{2}{*}{$\mathds{1}_{n+l-1}$} & \multirow{2}{*}{$v^{\top}$} & \multirow{2}{*}{$0$} \\
        & & \\
        \hline
        $0$ & $1$ & $0$ \\
        \hline
        $0$ & $0$ & $1$
    \end{tabular}\right)
    \cdot
    \left(
    \begin{tabular}{c|c}
        $\mathds{1}_{n+l}$ & $\tilde m^{\top}$ \\
        \hline
        $0$ & $1$
    \end{tabular}\right)
    \cdot A =
    \begin{pmatrix} B_2 \\ u_2 \\ u_1 \end{pmatrix}.\]
    Notice that the product of the first two matrices can be written in the form
    \[\left(\begin{tabular}{c|c|c}
        \multirow{2}{*}{$\mathds{1}_{n+l-1}$} & \multirow{2}{*}{$*$} & \multirow{2}{*}{$*$} \\
         & & \\
        \hline
        $0$ & $1$ & $*$ \\
        \hline
        $0$ & $0$ & $1$ \\
    \end{tabular}\right),\]
    where the last column has entries in the module~$M$ and the rest of the matrix has entries in the ring~$R$. Iterating this yields a~matrix
    \[C :=
    \left(\begin{tabular}{c|ccc|c}
        \multirow{2}{*}{$\mathds{1}_n$} & \multicolumn{3}{c|}{\multirow{2}{*}{$*$}} & \multirow{2}{*}{$*$} \\
         & \multicolumn{3}{c|}{} & \\
        \hline
        \multirow{3}{*}{\vspace{-10px}$0$} & $1$ & $*$ & $*$ & \multirow{3}{*}{\vspace{-10px}$*$} \\
        & $0$ & $\ddots$ & $*$ & \\
        & $0$ & $0$ & $1$ & \\
        \hline
        $0$ & \multicolumn{3}{c|}{$0$} & $1$ \\
    \end{tabular}\right)\]
    and $C \cdot A$ is a~matrix of the form $\begin{pmatrix} B^{\prime} \\ B^{\prime \prime} \end{pmatrix}$, where $B^{\prime}$ is an~$n \times k$-matrix and $B^{\prime \prime}$ is an~$(l + 1) \times k$-block. The matrix $B^{\prime}$ is unimodular by construction. Note that row operations involving only the rows of~$B^{\prime \prime}$ do not change the matrix~$B^{\prime}$. Hence, we can change the above matrix~$C$ to be of the form
    \[C^{\prime} :=
    \left(\begin{tabular}{c|c|c}
        \multirow{2}{*}{$\mathds{1}_n$} & \multirow{2}{*}{$*$} & \multirow{2}{*}{$*$} \\
         & & \\
        \hline
        \multirow{2}{*}{\vspace{-10px}$0$} & \multirow{2}{*}{$\mathds{1}_l$} & \multirow{2}{*}{$0$} \\
        & & \\
        \hline
        $0$ & $0$ & $1$ \\
    \end{tabular}\right).\]
    Again, $C^{\prime} \cdot A$ is a~matrix of the form $\begin{pmatrix} B^{\prime} \\ \tilde B^{\prime \prime} \end{pmatrix}$, where $B^{\prime}$ is the same matrix as above and hence unimodular. Instead of dividing this matrix into the first $n$ and the last $l+1$ rows, let us now divide it into the first $n+l$ and the last row, written as $\begin{pmatrix} B^{\prime \prime \prime} \\ u \end{pmatrix}$, where $u$ is by construction the last row of the matrix~$A$. Since the matrix~$B^{\prime}$ is unimodular, so is the matrix~$B^{\prime \prime \prime}$. Row operations on~$B^{\prime \prime \prime}$ correspond to multiplying $B^{\prime \prime \prime}$ on the left by invertible matrices which keeps the matrix unimodular. Hence, we can perform row operations on~$C^{\prime}$ using all but the last row to get a~matrix of the form
    \[\left(\begin{tabular}{c|c|c}
        \multirow{2}{*}{$\mathds{1}_n$} & \multirow{2}{*}{$0$} & \multirow{2}{*}{$m^{\top}$} \\
        & & \\
        \hline
        \multirow{2}{*}{\vspace{-10px}$0$} & \multirow{2}{*}{$\mathds{1}_l$} & \multirow{2}{*}{$0$} \\
        & & \\
        \hline
        $0$ & $0$ & $1$ \\
    \end{tabular}\right).\]
    This finishes the proof.
\end{proof}

We immediately get the following corollary.

\begin{cor}\label{CorOfPropThm1SnkVas}
    Let $k + \sr(R) = n + 1$ and $l>0$ then for any unimodular $(n+l)\times k$-block~$A$ there is a~vector $m \in M^n$ and an~$n \times l$-matrix~$Q$ with entries in $R$ such that
    \[\left(\begin{tabular}{c|c|c}
        \multirow{2}{*}{$\mathds{1}_n$} & \multirow{2}{*}{$Q$} & \multirow{2}{*}{$m^{\top}$} \\
        & & \\
        \hline
        \multirow{2}{*}{$0$} & \multirow{2}{*}{$\mathds{1}_l$} & \multirow{2}{*}{$0$} \\
        & & \\
        \hline
        $0$ & $0$ & $1$
    \end{tabular}\right)
    \cdot A = \begin{pmatrix} B_1 \\ B_2 \\ u \end{pmatrix},\]
    where the $n \times k$-matrix~$B_1$ is unimodular and $\begin{pmatrix} B_2 \\ u \end{pmatrix}$ are the last $l + 1$ rows of the block~$A$.
\end{cor}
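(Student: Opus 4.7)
The plan is to observe that the corollary is already essentially contained in the proof of Proposition~\ref{PropThm1SnkVas}: one simply stops one step earlier, at the intermediate matrix $C^{\prime}$ constructed there, instead of continuing with the further row operations that zero out its upper middle block.

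Explicitly, in that proof the matrix
\[C^{\prime} =
\left(\begin{tabular}{c|c|c}
    $\mathds{1}_n$ & $\ast$ & $\ast$ \\
    \hline
    $0$ & $\mathds{1}_l$ & $0$ \\
    \hline
    $0$ & $0$ & $1$
\end{tabular}\right)
\]
is produced, with the upper middle $\ast$ block having entries in $R$ and the upper right $\ast$ block having entries in $M$; moreover it is shown that $C^{\prime} \cdot A = \begin{pmatrix} B^{\prime} \\ \tilde{B}^{\prime \prime} \end{pmatrix}$ with $B^{\prime}$ an $n \times k$ unimodular matrix. Taking $Q$ to be the upper middle $\ast$, $m^{\top}$ to be the upper right $\ast$, and $B_1 := B^{\prime}$ then gives the first half of the claim.

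For the second half I would observe that rows $n+1, \ldots, n+l+1$ of $C^{\prime}$ are essentially standard basis vectors: row $n+i$ for $1 \leq i \leq l$ has a $1$ in column $n+i$ and zeros elsewhere, and the final row has a $1$ in the last column and zeros elsewhere. Hence left-multiplying $A$ by these rows returns precisely the last $l+1$ rows of $A$, so writing these as $\begin{pmatrix} B_2 \\ u \end{pmatrix}$ (with $u$ the final row of $A$) yields the remaining claim.

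There is no real obstacle here. The corollary is strictly weaker than Proposition~\ref{PropThm1SnkVas}, which continues the reduction by performing row operations on all but the last row of $C^{\prime}$ to kill its upper middle block, at the cost of only being able to guarantee unimodularity of the entire $(n+l) \times k$ top part rather than of $B_1$ alone. For the corollary we simply refrain from applying those final operations, and the $*$ that would have been killed is renamed $Q$.
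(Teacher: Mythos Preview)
Your proposal is correct and matches the paper's own proof exactly: the paper's proof is the single line ``The matrix $C^{\prime}$ constructed in the proof of Proposition~\ref{PropThm1SnkVas} is the required matrix.'' Your additional explanation of why the last $l+1$ rows of $C^{\prime}\cdot A$ coincide with those of $A$ is a helpful elaboration, though your closing remark that the corollary is ``strictly weaker'' is slightly misleading---the corollary trades a more general left-multiplier (the extra block $Q$) for a sharper unimodularity conclusion ($B_1$ alone rather than the full $(n+l)\times k$ top part).
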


\begin{proof}
    The matrix $C^{\prime}$ constructed in the proof of Proposition~\ref{PropThm1SnkVas} is the required matrix.
\end{proof}

%%%%%%%%%%%%%%%%%%%%%%%%%%%%%%%%%%%%%%%%%%%%%%%%%%%%%%%%%%%%%%%%%%%%%%%%%%%%%%%%%%%%%%%%%%%%%%%%%%%%%%%%%%%%%%%%%%%%%%%%%%%%%%%%%%%%%%%%%%%%%%%%%%%%%%%%
\subsubsection{Orthogonal Transvections}
%%%%%%%%%%%%%%%%%%%%%%%%%%%%%%%%%%%%%%%%%%%%%%%%%%%%%%%%%%%%%%%%%%%%%%%%%%%%%%%%%%%%%%%%%%%%%%%%%%%%%%%%%%%%%%%%%%%%%%%%%%%%%%%%%%%%%%%%%%%%%%%%%%%%%%%%

Following~\cite[Ch.~7]{MvdKV} let $e$ and $u$ be elements in the quadratic module $(M, \lambda, \mu)$ satisfying $\mu(e) = 0$ and $\lambda(e, u) = 0$. For $x \in \mu(u)$ we define an~automorphism $\tau(e, u, x)$  of  the quadratic module~$M$ by
    \[\tau(e, u, x)(v) = v + u\lambda(e, v) - e \overline{\eps} \lambda(u, v) - e \overline{\eps}x \lambda(e, v).\]
If $e$ is $\lambda$-unimodular, the map $\tau(e, u, x)$ is called an~\textit{orthogonal transvection}.

The following is the last ingredient in order to prove Lemma~\ref{Lem6.6MvdK}.

\begin{prop} \textup(\cite[Prop.~5.12]{R-WWahl}\textup) \label{PropCancellationH}
    Let $M$ be a~quadratic module and $M \oplus H \cong H^{g + 1}$. If $g \geq \usr(R)$ then $M \cong H^g$.
\end{prop}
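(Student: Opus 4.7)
The plan is to adapt the quotient-and-realignment strategy of Proposition~\ref{PropGLCancellation} to the unitary setting. Fix an isomorphism $\phi\colon M \oplus H \to H^{g+1}$ of quadratic modules. My goal is to produce $\psi \in U(H^{g+1})$ such that $\psi \circ \phi$ carries the distinguished $H$ summand of $M \oplus H$ identically onto the last hyperbolic summand of $H^{g+1}$. Once this is arranged, $\psi \circ \phi$ restricts to an isomorphism between $M$ and the orthogonal complement of the last summand, which is $H^g$ by construction; this is the unitary analogue of the quotient step used for general linear groups.

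Producing $\psi$ reduces to showing that $U(H^{g+1})$ acts transitively on hyperbolic pairs $(x, y)$ in $H^{g+1}$, i.e.\ on length-one elements of $\calH\calU(H^{g+1})$. I would proceed in two stages. First, since $H^{g+1}$ has Witt index $g+1 \geq \usr(R) + 1$, Lemma~\ref{Lem6.6MvdK} applied with $P = 0$ and $k = 1$ yields an automorphism moving $\phi(e)$ into the last hyperbolic summand with $\lambda$-unimodular projection; composing with an elementary automorphism of that single hyperbolic factor normalises $\phi(e)$ to $e_{g+1}$. Second, the constraints $\lambda(e_{g+1}, \phi(f)) = 1$ and $\mu(\phi(f)) = 0$ force $\phi(f) = a\, e_{g+1} + f_{g+1} + w$ for some $a \in R$ and some $w$ lying in the first $g$ hyperbolic summands, with $\overline{a} + \mu(w) \in \Lambda$. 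The orthogonal transvection $\tau(e_{g+1}, -w, x_0)$ with $x_0 = \eps a + \lambda(w, w)$ then fixes $e_{g+1}$ and sends $\phi(f)$ to $f_{g+1}$. The required relation $x_0 \in \mu(-w) + \Lambda$ follows by combining $\lambda(w, w) \equiv 2\mu(w) \pmod{\Lambda}$, which is forced by the quadratic module axioms, with $\eps a - \overline{a} \in \Lambda_{\min} \subseteq \Lambda$.

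The main technical obstacle is precisely this last bookkeeping: verifying that orthogonal transvections based at $e_{g+1}$ provide enough freedom to cancel the error term while respecting the constraints imposed by $\mu$ and the form parameter $\Lambda$. The calculation is classical Bass-type cancellation for hyperbolic forms, carried out in detail in \cite[Prop.~5.12]{R-WWahl} and in the spirit of \cite[Lemma~6.6]{MvdK}; I would appeal to that argument to close the proof rather than reproduce every line.
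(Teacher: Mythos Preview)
The paper supplies no proof of this proposition; it is quoted verbatim from \cite[Prop.~5.12]{R-WWahl} and used as a black box. Your sketch is the standard hyperbolic cancellation argument, and the transvection calculation in the second step is correct: with $x_0 = \eps a + \lambda(w,w)$ one indeed has $\tau(e_{g+1}, -w, x_0)(\phi(f)) = f_{g+1}$, and the congruence $x_0 \equiv \mu(w) \pmod{\Lambda}$ follows from $\eps a \equiv \overline{a}$ and $\lambda(w,w) \equiv 2\mu(w)$ modulo $\Lambda$, combined with $\overline{a} + \mu(w) \in \Lambda$.

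There is, however, a circularity in your first step as written within this paper. You invoke Lemma~\ref{Lem6.6MvdK} to move $\phi(e)$ into a fixed hyperbolic summand, but in this paper Lemma~\ref{Lem6.6MvdK} is proved \emph{after} Proposition~\ref{PropCancellationH} and its proof explicitly uses Proposition~\ref{PropCancellationH} (to identify the orthogonal complement $V$ of $U \cong H^k$ inside $H^g$ with $H^{g-k}$). So in the logical order established here you cannot cite Lemma~\ref{Lem6.6MvdK} at this point. The repair is straightforward: since $g \geq \usr(R)$ the transitivity condition $(\mathrm{T}_{g+1})$ holds, and $\phi(e)$ is a unimodular element of $H^{g+1}$ with $\mu(\phi(e)) = 0$, so the elementary unitary group $EU^{\eps}(H^{g+1}, \Lambda)$ already carries $\phi(e)$ to $e_{g+1}$ directly, with no appeal to Lemma~\ref{Lem6.6MvdK}. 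With that adjustment your argument is complete and matches the proof in \cite{R-WWahl}.
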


\begin{proof}[Proof of Lemma~\ref{Lem6.6MvdK}]
    In the following we adapt the ideas of~Step~$1$ in the proof of~\cite[Thm.~8.1]{MvdKV}. Let $(v_1, \ldots, v_k)$ be a~$\lambda$-unimodular sequence in the quadratic module $P \oplus H^g$ with $g \geq \usr(R) + k$. Recall that we want to show that there is an~automorphism $\phi \in U(P \oplus H^g)$ such that $\phi(v_1, \ldots, v_k) \subseteq P \oplus H^k$ and the projection of $\phi(v_1, \ldots, v_k)$ to the hyperbolic~$H^k$ is $\lambda$-unimodular. Denoting the basis of~$H^g$ by $e_1, f_1, \ldots, e_g, f_g$ we can write
        \[v_i = p_i + \sum_{l = 1}^g e_l A^i_l + \sum_{l = 1}^g f_l B^i_l \quad \text{ for } p_i \in P \text{ and } A^i_l, B^i_l \in R.\]
    As the sequence $(v_1, \ldots, v_k)$ is $\lambda$-unimodular, there are
        \[w_i = q_i + \sum_{l = 1}^g e_l a^i_l + \sum_{l = 1}^g f_l b^i_l \quad \text{ for } q_i \in P \text{ and } a^i_l, b^i_l \in R\]
    satisfying
    \begin{eqnarray*}
        \delta_{i,j}
        & =
        & \lambda(w_i, v_j) = (q_i, a^i_1, b^i_1, \ldots, b^i_g)
        \left(\begin{tabular}{c|cc|c}
            $\lambda|_P$ & $0$ & $0$ & $\cdots$ \\
            \hline
            $0$ & $0$ & $1$ & \multirow{2}{*}{$\cdots$} \\
            $0$ & $\eps$ & $0$ & \\
            \hline
            $\vdots$ & \multicolumn{2}{c|}{$\vdots$} & $\ddots$
        \end{tabular}\right)
    \begin{pmatrix}
        p_j \\
        A^j_1 \\
        B^j_1 \\
        \vdots \\
        B^j_g
    \end{pmatrix} \\
        & =
    &   \lambda(q_i, p_j) + \sum_{l = 1}^g a^i_l B^j_l + \eps \sum_{l = 1}^g b^i_l A^j_l
    \end{eqnarray*}
    Note that a~sequence $(v_1\, \ldots, v_k)$ is $\lambda$-unimodular if and only if its associated block
    \[A_{(v_1, \ldots, v_k)} := \begin{pmatrix}
        A^1_1           & \ldots & A^k_1 \\
        B^1_1           & \ldots & B^k_1 \\
        \vdots          &        & \vdots \\
        B^1_g           & \ldots & B^k_g \\
        \lambda(-, p_1) & \ldots & \lambda(-, p_k)
    \end{pmatrix}\]
    is unimodular. Since $g - k + 1 > \sr(R)$ by Proposition~\ref{PropThm1SnkVas} there are $\tilde p_1, \ldots, \tilde p_g \in P$ such that
    \[\left(\begin{tabular}{cc|c}
        \multicolumn{2}{c|}{\multirow{5}{*}{$\mathds{1}_{2g}$}} & $\tilde p_1$ \\
        \multicolumn{2}{c|}{} & $0$ \\
        \multicolumn{2}{c|}{} & $\vdots$ \\
        \multicolumn{2}{c|}{} & $\tilde p_g$ \\
        \multicolumn{2}{c|}{} & $0$ \\
        \hline
        \multicolumn{2}{c|}{$0$} & $1$
    \end{tabular}\right)
    \cdot A_{(v_1, \ldots, v_k)}=
    \begin{pmatrix} B \\ u \end{pmatrix},\]
    where the matrix~$B$ is unimodular. Strictly speaking this is not of the form of Proposition~\ref{PropThm1SnkVas} but we can reorder the basis of the matrix part to get the above statement. Now, for $y_i \in \mu(\tilde p_i)$ consider the following composition of transvections
        \[\tilde {\phi} :=  \tau(e_g, - \overline{\eps} \tilde p_g, \eps y_g \overline{\eps}) \circ \ldots \circ \tau(e_1, - \overline{\eps} \tilde p_1, \eps y_1 \overline{\eps}).\]
    Then by induction we have
        \[\tilde{\phi}(v_i) = v_i + \sum_{j = 1}^g\left( - \overline{\eps} \tilde p_j B_j^i + e_j \lambda(\tilde p_j, p_i) - \left(e_j \overline{\eps} \sum_{l=1}^{j-1}\lambda(\tilde p_j, \tilde p_l) B^i_l \right) - e_j y_j \overline{\eps} B^i_j \right),\]
    where we have used the identity $\overline \eps \eps = 1$ several times.

    Next, we show that the projection of $\tilde{\phi}(v_1, \ldots, v_k)$ to~$H^g$ is $\lambda$-unimodular. For this we explain how the the block $A_{\tilde \phi(v_1, \ldots, v_k)}$ is obtained from the block $A_{(v_1, \ldots, v_k)}$ and show that the matrix part of the block $A_{\tilde \phi(v_1, \ldots, v_k)}$ is unimodular. Adding $\sum_{j=1}^{g} - \overline \eps \tilde p_j B^i_j$ to $v_i$ for each $i$ corresponds to changing only the last row of the block $A_{(v_1, \ldots, v_k)}$ and so doesn't affect its matrix part. Adding $\sum_{j=1}^{g} e_j \lambda(\tilde p_j, p_i)$ to $v_i$ for $1 \leq i \leq k$ corresponds to the following multiplication on the level of blocks:
        \[\left(\begin{tabular}{cc|c}
        \multicolumn{2}{c|}{\multirow{5}{*}{$\mathds{1}_{2g}$}} & $\tilde p_1$ \\
        \multicolumn{2}{c|}{} & $0$ \\
        \multicolumn{2}{c|}{} & $\vdots$ \\
        \multicolumn{2}{c|}{} & $\tilde p_g$ \\
        \multicolumn{2}{c|}{} & $0$ \\
        \hline
        \multicolumn{2}{c|}{$0$} & $1$
    \end{tabular}\right)
    \cdot A_{(v_1, \ldots, v_k)}.\]
    As we have seen above this is $\begin{pmatrix} B \\ u \end{pmatrix}$ with $B$ a~unimodular matrix. Adding the terms
        \[\sum_{j=1}^{g} - e_j \overline{\eps} \sum_{l=1}^{j-1}\lambda(\tilde p_j, \tilde p_l) B^i_l \text{ and } \sum_{j=1}^{g} - e_j y_j \overline{\eps} B^i_j\]
    corresponds to multiplying the block $A_{(v_1, \ldots, v_k)}$ from the left by matrices of the forms
    \[\left(\begin{tabular}{c|c}
            $C_1$ & $0$ \\
            \hline
            $0$ & $1$ \\
    \end{tabular}\right) \text{ and }
    \left(\begin{tabular}{c|c}
            $C_2$ & $0$ \\
            \hline
            $0$ & $1$ \\
    \end{tabular}\right)\]
    respectively, where $C_1$ is a~lower triangular matrix with $1$'s on the diagonal and $C_2$ is an~upper triangular matrix with $1$'s on the diagonal. In particular, both $C_1$ and $C_2$ are invertible. Note that all of the three above steps only change the coefficient of the~$e_i$, by adding on multiples of the coefficients of the $f_i$ and the last row. Therefore, applying $\tilde \phi$ to $(v_1, \ldots, v_k)$ corresponds to multiplying $A_{(v_1, \ldots, v_k)}$ from the left by the product of the above matrices:
    \[\left(\begin{tabular}{c|c}
            $C_2$ & $0$ \\
            \hline
            $0$ & $1$ \\
    \end{tabular}\right) \cdot
    \left(\begin{tabular}{c|c}
            $C_1$ & $0$ \\
            \hline
            $0$ & $1$ \\
    \end{tabular}\right) \cdot
    \left(\begin{tabular}{cc|c}
        \multicolumn{2}{c|}{\multirow{5}{*}{$\mathds{1}_{2g}$}} & $\tilde p_1$ \\
        \multicolumn{2}{c|}{} & $0$ \\
        \multicolumn{2}{c|}{} & $\vdots$ \\
        \multicolumn{2}{c|}{} & $\tilde p_g$ \\
        \multicolumn{2}{c|}{} & $0$ \\
        \hline
        \multicolumn{2}{c|}{$0$} & $1$
    \end{tabular}\right)
    \cdot A_{(v_1, \ldots, v_k)} =
    \begin{pmatrix} C_2 C_1 B \\ u \end{pmatrix}.\]
    Since $B$ is unimodular so is $C_2 C_1 B$. This corresponds to the projection of $\tilde{\phi}(v_1, \ldots, v_k)$ to~$H^g$ which is therefore also unimodular.

    Now applying \cite[Lemma~6.6]{MvdK} yields a~hyperbolic basis $\{\tilde e_1, \tilde f_1, \ldots, \tilde e_g, \tilde f_g\}$ of~$H^g$ such that
        \[\tilde \phi(v_1)|_{H^g}, \ldots, \tilde \phi(v_k)|_{H^g} \in \langle \tilde e_1, \tilde f_1, \ldots, \tilde e_k, \tilde f_k \rangle =: U.\]
    Note that this does not need to be the standard basis of $H^g$ hence we need to find an~automorphism~$\psi$ of~$H^g$ that sends the above basis $\tilde e_1, \tilde f_1, \ldots, \tilde e_g, \tilde f_g$ to the standard basis in~$H^g$. Then $\phi := (\mathds{1}_P \oplus \psi) \circ \tilde{\phi}$ will be the required automorphism. Let $V$ denote an~orthogonal complement of~$U$ in~$H^g$, i.e.\ $U \oplus V \cong H^g$. We have $g - k \geq \usr(R)$ and hence Proposition~\ref{PropCancellationH} implies $V \cong H^{g - k}$. Let $\psi$ denote the automorphism of~$H^g$ which sends $U$ to the first $k$ copies of~$H$ in~$H^g$ and $V$ to the last $g - k$ copies. Using the above definition of $\phi$ we then have $\phi(v_1, \ldots, v_k) \subseteq P \oplus H^k$ and the projection of~$\phi(v_1, \ldots, v_k)$ to~$H^k$ is unimodular.
\end{proof}

We get the following version of~\cite[Thm.~8.1]{MvdKV}, but phrased in terms of the unitary stable rank instead of the absolute stable rank.

\begin{cor} \label{CorThm8.1MvdKV}
    Let $r \in R$ and $(M, \lambda, \mu)$ be a~quadratic module satisfying $g(M) \geq \usr(R) + 1$. Then $U(M)$ acts transitively on the set of all $\lambda$-unimodular elements~$v$ in~$M$ satisfying $\mu(v) = r + \Lambda$.
\end{cor}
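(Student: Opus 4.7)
The strategy parallels Step~1 of the proof of~\cite[Thm.~8.1]{MvdKV}, with Lemma~\ref{Lem6.6MvdK} (phrased in terms of the unitary stable rank) taking the role of the absolute stable rank argument there, and with the transitivity condition~$(\mathrm{T}_{\usr(R)+1})$ built into the definition of $\usr(R)$ invoked at the end. Write $M \cong P \oplus H^g$ with $g \geq \usr(R) + 1$, and let $v_1, v_2 \in M$ be $\lambda$-unimodular with $\mu(v_1) = \mu(v_2) = r + \Lambda$. First I would apply Lemma~\ref{Lem6.6MvdK} with $k = 1$ separately to each $v_i$, producing $\phi_i \in U(M)$ such that $\phi_i(v_i) \in P \oplus H$ and the projection of $\phi_i(v_i)$ to $H$ is $\lambda$-unimodular. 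Since elements of $U(M)$ preserve $\mu$, we retain $\mu(\phi_i(v_i)) = r + \Lambda$.

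Next, a further composition of orthogonal transvections $\psi_i \in U(M)$ is used to move $\phi_i(v_i)$ from $P \oplus H$ entirely into $H^{\usr(R)+1} \subseteq H^g$. The construction mimics the chain $\tau(e_j, -\overline\eps \tilde p_j, \eps y_j \overline\eps)$ used in the proof of Lemma~\ref{Lem6.6MvdK}, but the auxiliary vectors~$\tilde p_j$ are now selected, using the $\lambda$-unimodularity of the $H$-projection together with the stable range condition~$(\mathrm{S}_{\usr(R)})$, so that the composition annihilates the $P$-component of $\phi_i(v_i)$ while depositing the resulting correction terms into the extra hyperbolic summands $H_2, \ldots, H_{\usr(R)+1}$.

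Once $\psi_i\phi_i(v_i) \in H^{\usr(R)+1}$ for $i = 1, 2$, both are unimodular vectors in $R^{2(\usr(R)+1)}$ with $\mu$-value $r + \Lambda$, so the transitivity condition~$(\mathrm{T}_{\usr(R)+1})$ supplies $\omega \in EU^{\eps}(H^{\usr(R)+1}, \Lambda)$ with $\omega(\psi_1\phi_1(v_1)) = \psi_2\phi_2(v_2)$. Extending $\omega$ by the identity on $P \oplus H^{g - \usr(R) - 1}$ yields $\widetilde{\omega} \in U(M)$, and the composition $\phi_2^{-1}\psi_2^{-1}\widetilde{\omega}\psi_1\phi_1$ is the required element of $U(M)$ sending $v_1$ to $v_2$. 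The main obstacle is the middle step: the transvections must be chosen so that they kill the $P$-component while preserving $\lambda$-unimodularity and remaining compatible with the constraint $x \in \mu(u)$, a delicate bookkeeping refinement of the transvection argument already used in Lemma~\ref{Lem6.6MvdK}.
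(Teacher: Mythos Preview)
Your overall plan (normalise each $v_i$ into the hyperbolic part and then invoke the transitivity condition) is sound, but step~2 as written does not go through, and this is precisely the place you flag as ``the main obstacle''. After Lemma~\ref{Lem6.6MvdK} the element $\phi_i(v_i)$ lies in $P \oplus H_1$, where $H_1$ denotes the first hyperbolic summand. For $j \geq 2$ one has $\lambda(e_j, \phi_i(v_i)) = 0$, so a transvection $\tau(e_j, -\overline{\eps}\tilde p_j, \ldots)$ acts as the identity on $\phi_i(v_i)$. Thus the chain of transvections you propose, modelled on the one in the proof of Lemma~\ref{Lem6.6MvdK}, cannot ``deposit correction terms into $H_2, \ldots, H_{\usr(R)+1}$'' or indeed alter the $P$-component at all. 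The stable range condition does not help here: the difficulty is not producing suitable $\tilde p_j$, but that the only transvections which see $\phi_i(v_i)$ are those based at $e_1$ or $f_1$, and these contribute multiples of $\lambda(e_1,\phi_i(v_i))$ and $\lambda(f_1,\phi_i(v_i))$ to the $P$-direction, neither of which need be a unit.

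The paper sidesteps this by reversing your two middle steps. After Lemma~\ref{Lem6.6MvdK} one first applies the transitivity condition $(\mathrm{T}_g)$ on all of $H^g$ (extended by the identity on~$P$) to carry the $H^g$-projection of $\phi(v)$ to the standard form $e_1 + b'f_1$. Only then does one kill the $P$-component: now the $e_1$-coefficient equals~$1$, so $\lambda(f_1,\,\cdot\,)$ evaluates to a unit on the element, and a \emph{single} transvection $\tau(f_1,-p,x)$ with $x \in \mu(p)$ removes~$p$. A final transvection $\tau(f_1,0,\eps(b-r))$ adjusts the $f_1$-coefficient from $b$ to $r$, showing that every such $v$ is in the $U(M)$-orbit of $e_1 + rf_1$. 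The moral is that the transitivity condition is what produces a unit coefficient in the hyperbolic part, and this unit is what makes the $P$-killing transvection available; attempting the transvections first leaves you without that unit.
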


\begin{proof}
    For $g = g(M)$ there is a~quadratic module~$P$ such that $M \cong P \oplus H^g$. We write $e_1, f_1, \ldots, e_g, f_g$ for the basis of~$H^g$. We show that we can map a~$\lambda$-unimodular element~$v$ with $\mu(v) = r + \Lambda$ to $e_1 + r f_1$.

    By Lemma~\ref{Lem6.6MvdK} there is an~automorphism $\phi \in U(P \oplus H^g)$ such that $\phi(v) \subseteq P \oplus H$ and the projection of~$\phi(v)$ to the hyperbolic~$H$ is unimodular. Hence, by the transitivity condition~$(T_g)$ we can map the projection of~$\phi(v)$ (considered in $H^g$) to $e_1 + b^{\prime} f_1$ having the same length as the projection of~$\phi(v)$. Thus, we have mapped $v$ to the element $p + e_1 + b^{\prime} f_1$ for some element $p \in P$. Applying the orthogonal transvection $\tau(f_1, -p, x)$ for some $x \in \mu(p)$ maps $p + e_1 + b^{\prime} f_1$ to $e_1 + b f_1$, with $b = b^{\prime} + \overline{\eps} \lambda(p, p) - \overline{\eps}x$. We have
        \[r + \Lambda = \mu(v) = \mu(e_1 + b f_1) = b + \Lambda\]
    and
        \[\tau(f_1, 0, \eps(b - r))(e_1 + b f_1) = e_1 + r f_1. \qedhere \]
\end{proof}

By Lemma~\ref{LemNonSingular} this is a~generalisation of~\cite[Prop.~3.3]{GR-WStabNew} which treats the special case of quadratic modules over the integers. Note that our bound is slightly better than the bound given in the special case.

Adapting the proof of~\cite[Cor.~8.3]{MvdKV}, using Corollary~\ref{CorThm8.1MvdKV} instead of~\cite[Thm.~8.1]{MvdKV} yields the following improvement to Proposition~\ref{PropCancellationH}. Note that Step~$6$ of~\cite[Thm.~8.1]{MvdKV} still works in our setting.

\begin{cor} \label{CorCancellationHGeneral}
    Let $M$ and $N$ be quadratic modules and $M \oplus H \cong N \oplus H$. If $g(M) \geq \usr(R)$ then $M \cong N$.
\end{cor}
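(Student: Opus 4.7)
The plan is to mirror the structure of the cancellation argument in \cite[Cor.~8.3]{MvdKV}, but substitute our Corollary~\ref{CorThm8.1MvdKV} in place of \cite[Thm.~8.1]{MvdKV} at the transitivity step. Starting from an isomorphism $\phi \colon M \oplus H \xrightarrow{\cong} N \oplus H$, where the last hyperbolic summand has basis $e, f$, the goal is to modify $\phi$ by an element of $U(N \oplus H)$ so that it preserves the chosen $H$ pointwise, and hence restricts to the desired isomorphism on orthogonal complements.

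First, I would consider the image $\phi(0, e) \in N \oplus H$. Since $\phi$ is an~isomorphism, $\phi(0,e)$ is $\lambda$-unimodular, and it is isotropic with $\mu(\phi(0,e)) = \mu(0, e) = 0$. From $M \oplus H \cong N \oplus H$ and $g(M) \geq \usr(R)$ we deduce
    \[g(N \oplus H) = g(M \oplus H) \geq g(M) + 1 \geq \usr(R) + 1,\]
so Corollary~\ref{CorThm8.1MvdKV} applies to the quadratic module $N \oplus H$ and produces $\psi_1 \in U(N \oplus H)$ with $\psi_1(\phi(0,e)) = (0, e)$. Replacing $\phi$ by $\psi_1 \circ \phi$, we may assume $\phi(0, e) = (0, e)$.

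Second, I would arrange that also $\phi(0, f) = (0, f)$. Set $f^{\prime} := \phi(0, f)$; then $\lambda((0,e), f^{\prime}) = 1$ and $\mu(f^{\prime}) = 0$, so writing $f^{\prime} = (0, f) + n$ one has $n \in (0, e)^{\perp}$. A~suitable orthogonal transvection $\tau((0, e), u, x)$, with $u$ and $x \in \mu(u)$ chosen in terms of $n$, fixes $(0, e)$ and sends $f^{\prime}$ to $(0, f)$. This is precisely Step~$6$ in the proof of~\cite[Thm.~8.1]{MvdKV}, which, as noted in the excerpt, is purely formal: it uses only the defining identities of an~$(\eps, \Lambda)$-quadratic module and the formula for an~orthogonal transvection, not the stable rank hypothesis. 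Composing with such a~transvection (which lies in $U(N \oplus H)$) we may further assume $\phi(0, f) = (0, f)$.

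Finally, a $\phi$ fixing both $(0, e)$ and $(0, f)$ preserves the hyperbolic summand, hence also its orthogonal complement. Identifying these complements with $M$ and $N$ respectively, $\phi$ restricts to an~isomorphism $M \cong N$ of quadratic modules, as required. The only nontrivial obstacle is verifying the existence of the element $x \in \mu(u)$ needed in the second step so that the transvection carries $f^{\prime}$ exactly to $(0, f)$; this is a~short manipulation using $\mu(v + w) \equiv \mu(v) + \mu(w) + \lambda(v, w) \pmod{\Lambda}$ together with $\mu(f^{\prime}) = 0$, and it is already carried out in the proof of~\cite[Thm.~8.1]{MvdKV}, so we may simply invoke it.
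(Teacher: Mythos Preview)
Your proposal is correct and follows essentially the same approach as the paper: the paper's proof consists precisely of adapting the cancellation argument of \cite[Cor.~8.3]{MvdKV}, replacing \cite[Thm.~8.1]{MvdKV} by Corollary~\ref{CorThm8.1MvdKV} at the transitivity step and noting that Step~6 of \cite[Thm.~8.1]{MvdKV} still goes through in the general setting. Your write-up simply unpacks what that citation amounts to.
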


In contrast to Proposition~\ref{PropCancellationH}, both $M$ and $N$ can be general quadratic modules and, in particular, both can be non-hyperbolic modules. As in the previous corollary, this bound is slightly better than the bound given in~\cite[Prop.~3.4]{GR-WStabNew} which only treats the case $R = \Z$.

%%%%%%%%%%%%%%%%%%%%%%%%%%%%%%%%%%%%%%%%%%%%%%%%%%%%%%%%%%%%%%%%%%%%%%%%%%%%%%%%%%%%%%%%%%%%%%%%%%%%%%%%%%%%%%%%%%%%%%%%%%%%%%%%%%%%%%%%%%%%%%%%%%%%%%%%
\subsection{Proof of Theorem~\ref{Thm7.4MvdK}}
%%%%%%%%%%%%%%%%%%%%%%%%%%%%%%%%%%%%%%%%%%%%%%%%%%%%%%%%%%%%%%%%%%%%%%%%%%%%%%%%%%%%%%%%%%%%%%%%%%%%%%%%%%%%%%%%%%%%%%%%%%%%%%%%%%%%%%%%%%%%%%%%%%%%%%%%

For the proof of Theorem~\ref{Thm7.4MvdK} we follow a~strategy similar to the proof of~\cite[Thm.~7.4]{MvdK}. As we have seen in Lemma~\ref{LemNonSingular}, in the hyperbolic case every unimodular sequence is already $\lambda$-unimodular. In the case of general quadratic modules, however, a~unimodular sequence of length~$1$, $(v_1)$, need not be $\lambda$-unimodular and more generally, $(v_1, \ldots, v_k, u_1, \ldots, u_l)$ is not necessarily $\lambda$-unimodular, even if the individual sequences $(v_1, \ldots, v_k)$ and $(u_1, \ldots, u_l)$ are $\lambda$-unimodular. The following lemma, however, shows that in certain circumstances this implication is still valid.

\begin{lem} \label{LemLambdaUnimodularProperties}
    Let $(v_1, \ldots, v_k) \in \calU(M, \lambda)$ be a~$\lambda$-unimodular sequence in~$M$ and let $w_1, \ldots, w_k \in M$ be such that $\lambda(w_i, v_j) = \delta_{i, j}$.
    \begin{enumerate}
        \item We have $M = \langle v_1, \ldots, v_k \rangle \oplus \langle w_1, \ldots, w_k \rangle^{\perp}$ as a~direct sum of $R$-modules (i.e.\ the summands are not necessarily orthogonal with respect to $\lambda$).
        \item If $(u_1, \ldots, u_l) \in \calU(M, \lambda)$ is a~$\lambda$-unimodular sequence with $\lambda(w_i, u_j) = 0$ for all $i, j$ then the sequence $(v_1, \ldots, v_k, u_1, \ldots, u_l)$ is $\lambda$-unimodular.
        \item Let $u_i = x_i + y_i$ for $x_i \in \langle v_1, \ldots, v_k \rangle$ and $y_i \in \langle w_1, \ldots, w_k \rangle^{\perp}$. Then $(v_1, \ldots, v_k, u_1, \ldots, u_l)$ is $\lambda$-unimodular if and only if $(v_1, \ldots, v_k, y_1, \ldots, y_l)$ is $\lambda$-unimodular.
    \end{enumerate}
\end{lem}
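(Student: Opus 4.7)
The unifying observation is that the $R$-linear functionals $\phi_i := \lambda(w_i, -) \colon M \to R$ satisfy $\phi_i(v_j) = \delta_{i,j}$, and that $\bigcap_i \ker \phi_i$ equals $\langle w_1, \ldots, w_k\rangle^{\perp}$. Given this, each part is a~standard sesquilinear manipulation.

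For part (1), for every $m \in M$ I split
\[
m = \sum_{j=1}^{k} v_j \phi_j(m) + \Bigl( m - \sum_{j=1}^{k} v_j \phi_j(m) \Bigr),
\]
where the first summand lies in $\langle v_1, \ldots, v_k\rangle$ and the second is killed by each $\phi_i$, hence lies in $\langle w_1, \ldots, w_k\rangle^{\perp}$. Conversely, any element $\sum_j v_j r_j \in \langle w_1, \ldots, w_k\rangle^{\perp}$ has $r_i = \phi_i(\sum_j v_j r_j) = 0$; this simultaneously gives the direct-sum decomposition and shows that $\langle v_1, \ldots, v_k\rangle$ is free on the $v_j$'s.

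For part (2), pick dual elements $z_1, \ldots, z_l$ witnessing that $(u_1, \ldots, u_l)$ is $\lambda$-unimodular. The given $w_i$ already satisfy $\lambda(w_i, v_j) = \delta_{i,j}$ and $\lambda(w_i, u_j) = 0$ (the latter by hypothesis), so I only need to modify the $z_i$ so that they become orthogonal to the $v_j$'s without losing their pairing with the $u_j$'s. The explicit formula
\[
z'_i := z_i - \sum_{r=1}^{k} w_r\, \overline{\lambda(z_i, v_r)}
\]
does the job: the identity $\lambda(w_r c, v_j) = \overline{c}\,\lambda(w_r, v_j) = \overline{c}\,\delta_{r,j}$ yields $\lambda(z'_i, v_j) = 0$, while $\lambda(w_r, u_j) = 0$ preserves $\lambda(z'_i, u_j) = \delta_{i,j}$.

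For part (3), writing $x_i = \sum_j v_j a_{j,i}$, the two sequences differ by right multiplication of the row by the invertible block-triangular matrix
\[
C := \begin{pmatrix} I_k & -A \\ 0 & I_l \end{pmatrix} \in \GL_{k+l}(R), \qquad A = (a_{j,i}).
\]
More generally, if $\underline{x}$ is $\lambda$-unimodular with dual sequence $\underline{y}$ giving pairing matrix $I$, then for any invertible $N \in \GL_{k+l}(R)$ the sequence $\underline{y} \cdot \overline{(N^{-1})^{\top}}$ is dual to $\underline{x} N$ (a direct computation using that $\lambda$ is antilinear in its first argument). Applying this with $N = C$ and $N = C^{-1}$ yields both implications. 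The only real obstacle throughout is bookkeeping: every "row operation" on a~dual sequence introduces a~conjugation because $\lambda$ is antilinear in its first variable, and keeping these straight -- as in the overline in the definition of $z'_i$ -- is what makes the formulas work. Once that is handled, all three parts are direct.
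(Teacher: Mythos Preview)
Your proof is correct and for parts (1) and (2) it is essentially identical to the paper's argument: the paper also uses the splitting of the surjection $\bigoplus_i \lambda(w_i,-)\colon M\to R^k$ for (1), and the same correction $z_i \mapsto z_i - \sum_n w_n\cdot(\text{scalar})$ for (2); you are in fact more careful than the paper about the conjugation arising from antilinearity.

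For part (3) you take a slightly different route. The paper argues both implications separately, in each case writing down explicit modified dual elements (replacing $w_i$ by $w_i - \sum_n \lambda(w_i,u_n) z_n$, resp.\ $w_i - \sum_n \lambda(w_i,y_n) z_n$), exactly in the spirit of part~(2). You instead observe once and for all that $\lambda$-unimodularity is preserved under right multiplication by any $N\in\GL_{k+l}(R)$, with dual sequence transforming by $\overline{(N^{-1})^{\top}}$, and then apply this with the block-unipotent matrix $C$. Your version is a little cleaner and handles both directions at once; the paper's version is more hands-on but amounts to the same computation.
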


\begin{proof}
    For (1) consider the map
        \[\bigoplus_{i = 1}^k \lambda(w_i, -) \colon M \longrightarrow R^k\]
    which sends $v_i$ to the $i$-th basis vector in $R^k$. The $v_i$ define a~splitting, and hence
        \[ M = \langle v_1, \ldots, v_k \rangle \oplus \Ker\left(\bigoplus_{i = 1}^k \lambda(w_i, -)\right) = \langle v_1, \ldots, v_k \rangle \oplus \langle w_1, \ldots, w_k \rangle^{\perp}.\]

    For (2) let $z_1, \ldots, z_l \in M$ such that $\lambda(z_i, u_j) = \delta_{i, j}$. Replacing $z_i$ by $z_i - \sum_{n=1}^{k}\lambda(z_i, v_n)w_n$ shows that the sequence $(v_1, \ldots, v_k, u_1, \ldots, u_l)$ is $\lambda$-unimodular since we have
    \begin{equation*}
        \begin{aligned}
            \lambda(w_i, v_j) & = \delta_{i, j} \qquad & \lambda \left(z_i - \sum_{n=1}^{k}\lambda(z_i, v_n)w_n, v_j \right) & = \lambda(z_i, v_j) - \lambda(z_i, v_j) = 0 \\
            \lambda(w_i, u_j) & = 0 \qquad & \lambda \left(z_i - \sum_{n=1}^{k}\lambda(z_i, v_n)w_n, u_j \right) & = \lambda(z_i, u_j) = \delta_{i, j}.
        \end{aligned}
    \end{equation*}

    To prove (3) we first assume that the sequence $(v_1, \ldots, v_k, y_1, \ldots, y_l)$ is $\lambda$-unimodular. Hence, there are $w_1, \ldots, w_k, z_1, \ldots, z_l \in M$ such that
    \begin{equation*}
        \begin{aligned}
            \lambda(w_i, v_j) & = \delta_{i, j} \qquad & \lambda (z_i, v_j) & = 0 \\
            \lambda(w_i, y_j) & = 0 \qquad & \lambda (z_i, y_j) & = \delta_{i, j}.
        \end{aligned}
    \end{equation*}
    Since $x_j \in \langle v_1, \ldots, v_k \rangle$ and $\lambda(z_i, v_j) = 0$ for all $i, j$ we have $\lambda(z_i, u_j) = \lambda(z_i, y_j) = \delta_{i, j}$. Replacing $w_i$ by $w_i - \sum_{n=1}^{l}\lambda(w_i, u_n)z_n$ shows that the sequence $(v_1, \ldots, v_k, u_1, \ldots, u_l)$ is $\lambda$-unimodular:
    \begin{equation*}
        \begin{aligned}
            \lambda\left(w_i - \sum_{n=1}^{l}\lambda(w_i, u_n)z_n, v_j\right) = \lambda(w_i, v_j) & = \delta_{i, j} \qquad & \lambda \left(z_i, v_j \right) & = 0 \\
            \lambda\left(w_i - \sum_{n=1}^{l}\lambda(w_i, u_n)z_n, u_j\right) = \lambda(w_i, u_j) - \lambda(w_i, u_j) & = 0 \qquad & \lambda \left(z_i, u_j \right) & = \delta_{i, j}.
        \end{aligned}
    \end{equation*}

    Now, assuming that $(v_1, \ldots, v_k, u_1, \ldots, u_l)$ is $\lambda$-unimodular we have $w_1, \ldots, w_k, z_1, \ldots, z_l \in M$ satisfying
    \begin{equation*}
        \begin{aligned}
            \lambda(w_i, v_j) & = \delta_{i, j} \qquad & \lambda (z_i, v_j) & = 0 \\
            \lambda(w_i, u_j) & = 0 \qquad & \lambda (z_i, u_j) & = \delta_{i, j}.
        \end{aligned}
    \end{equation*}
    As above we have $\lambda(z_i, y_j) = \lambda(z_i, u_j) = \delta_{i, j}$. Replacing $w_i$ by $w_i - \sum_{n=1}^{l}\lambda(w_i, y_n)z_n$ yields
    \begin{equation*}
        \begin{aligned}
            \lambda\left(w_i - \sum_{n=1}^{l}\lambda(w_i, y_n)z_n, v_j\right) = \lambda(w_i, v_j) & = \delta_{i, j} \qquad & \lambda \left(z_i, v_j \right) & = 0 \\
            \lambda\left(w_i - \sum_{n=1}^{l}\lambda(w_i, y_n)z_n, y_j\right) = \lambda(w_i, y_j) - \lambda(w_i, y_j) & = 0 \qquad & \lambda \left(z_i, y_j \right) & = \delta_{i, j},
        \end{aligned}
    \end{equation*}
    which shows that the sequence $(v_1, \ldots, v_k, y_1, \ldots, y_l)$ is $\lambda$-unimodular.
\end{proof}

To prove Theorem~\ref{Thm7.4MvdK} we need an~analogue of Theorem~\ref{ThmGLConnectivity} for the complex of $\lambda$-unimodular sequences in a~quadratic module. For this we use the following notation. Let $S \subseteq M$ be a~subset of a~quadratic module~$M$. We write $\calI(S, \mu)$ for the set of all elements $v \in S$ satisfying $\mu(v) = 0$.

\begin{thm} \label{ThmLambdaUnimodularConnectivity}
    Let $M = P \oplus H^g$ and $N$ be quadratic modules with $M \oplus H \subseteq N$.
    \begin{enumerate}
        \item $\calO\left(\calI(P \oplus \langle e_1, \ldots, e_g \rangle, \mu)\right) \cap \calU(N, \lambda)$ is $(g - \usr(R) - 1)$-connected,
        \item $\calO\left(\calI(P \oplus \langle e_1, \ldots, e_g \rangle, \mu)\right) \cap \calU(N, \lambda)_{(v_1, \ldots, v_k)}$ is $(g - \usr(R) - k - 1)$-connected for every sequence $(v_1, \ldots, v_k) \in \calU(N, \lambda)$.
    \end{enumerate}
\end{thm}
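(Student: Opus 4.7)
The plan is to adapt the strategy used in the proof of Theorem~\ref{ThmGLConnectivity} to the $\lambda$-unimodular setting. I would perform an induction on $g$, proving (1) and (2) simultaneously with auxiliary statements (a) and (b) obtained by replacing $P \oplus \langle e_1, \ldots, e_g \rangle$ by its union with a translate $P \oplus \langle e_1, \ldots, e_g \rangle + e_{g+1}$, where $e_{g+1}$ is a basis element of the extra hyperbolic summand in $M \oplus H \subseteq N$, with connectivity shifted upward by one. The induction would follow the same pattern as in the GL case: first prove (b), then deduce (2) via Lemma~\ref{Lem2.13vdK}(1), and then prove (1) and (a) simultaneously via Lemma~\ref{Lem2.13vdK}(2).

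For the base case $g = \usr(R) + k$ of (2), I need non-emptiness. Given a $\lambda$-unimodular $(v_1, \ldots, v_k) \in \calU(N, \lambda)$, Lemma~\ref{Lem6.6MvdK} applied inside $P \oplus H^{g+1} \subseteq N$ produces $\phi \in U(P \oplus H^{g+1})$ carrying $(v_1, \ldots, v_k)$ into $P \oplus H^k$ with $\lambda$-unimodular projection to $H^k$. Using $\lambda$-orthogonality, $e_i$ for $i > k$ extends the image to a $\lambda$-unimodular sequence by Lemma~\ref{LemLambdaUnimodularProperties}(2); pulling back by $\phi^{-1}$ and possibly applying a further orthogonal transvection would return an element of $\calI(P \oplus \langle e_1, \ldots, e_g \rangle, \mu)$ extending the original sequence. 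For the inductive step, I would apply Lemma~\ref{Lem2.13vdK}(1) with $X := \calI(P \oplus \langle e_1, \ldots, e_{g-1} \rangle, \mu)$, cutting off the $e_g$-direction. An arbitrary $(w_1, \ldots, w_l)$ in the link would be brought into $X$ by modifications of the form $w_i \mapsto w_i - v_j \cdot c_{ij}$, where Lemma~\ref{LemLambdaUnimodularProperties}(3) preserves $\lambda$-unimodularity, followed by orthogonal transvections to restore $\mu = 0$. The inductive hypothesis applied to $P \oplus \langle e_1, \ldots, e_{g-1}\rangle$ (still with the extra hyperbolic from $N$ available) then gives the required connectivity.

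The principal new difficulty beyond Theorem~\ref{ThmGLConnectivity} is ensuring that every normalisation step simultaneously preserves three conditions that were essentially conflated in the GL setting: $\lambda$-unimodularity in the ambient $N$, the isotropy condition $\mu = 0$, and containment in $P \oplus \langle e_1, \ldots, e_g \rangle$. Lemma~\ref{LemLambdaUnimodularProperties} is needed to preserve $\lambda$-unimodularity under the coordinate changes, orthogonal transvections (which lie in $U(M)$ and hence preserve both $\lambda$ and $\mu$) must absorb any $\mu$-discrepancy via the identity $\mu(x+y) = \mu(x) + \mu(y) + \lambda(x, y) \bmod \Lambda$, and all operations must respect the containment condition. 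A further delicate point is verifying the hypothesis of Lemma~\ref{Lem2.13vdK}(2) --- the existence of a $y_0$ with $\calO(X) \cap F \subseteq F_{(y_0)}$ --- which is expected to require a careful choice of the auxiliary translate set described in the first paragraph.
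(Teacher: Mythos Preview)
Your overall architecture is correct and matches the paper's: auxiliary statements (a) and (b) with the translate by $e_{g+1}$, induction on $g$, and the order (b) $\Rightarrow$ (2), then (1) and (a) together via Lemma~\ref{Lem2.13vdK}. However, two of the technical ingredients you reach for are not the ones that actually make the argument go through, and the substitutes the paper uses are not obvious refinements.

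First, Lemma~\ref{Lem6.6MvdK} is the wrong tool for the base case and the normalisation step. It produces an automorphism $\phi \in U(P \oplus H^g)$, but the subscript sequence $(v_1,\ldots,v_k)$ lives in $\calU(N,\lambda)$ and need not lie in $P \oplus H^{g+1}$ at all; moreover, the $\phi$ constructed in Lemma~\ref{Lem6.6MvdK} ends with an \emph{arbitrary} automorphism $\psi$ of $H^g$, so it will not preserve the set $P \oplus \langle e_1,\ldots,e_g\rangle$ (nor its union with the $e_{g+1}$-translate). Your phrase ``possibly applying a further orthogonal transvection'' does not repair this. The paper instead proves and uses Proposition~\ref{PropLemma6.6ForV}, a single-vector version that works for $v \in N$, needs only $g \geq \usr(R)$, and is built solely from transvections $\tau(e_j, p_j, \ldots)$ with $p_j \in (H^g)^\perp$; these automatically fix $S \oplus \langle e_1,\ldots,e_g\rangle$ setwise for every $S \subseteq (H^g)^\perp$, which is exactly the invariance you need.

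Second, the heart of the inductive step for (b) is not ``bringing link elements into $X$'' by subtracting multiples of the $v_j$. The mechanism is to \emph{remove $v_1$ from the subscript}: one first arranges, via Proposition~\ref{PropLemma6.6ForV} and then a further transvection, that $v_1$ admits a $\lambda$-witness $w_1$ lying in $\langle e_g, f_g\rangle$; this is the content of Lemma~\ref{LemLambdaUnimodularWitness}, which you do not mention. Because $w_1 \in \langle e_g, f_g\rangle$ is automatically $\lambda$-orthogonal to every element of $X = \calI(P \oplus (E_{g-1} \cup (E_{g-1}+e_{g+1})),\mu)$, Lemma~\ref{LemLambdaUnimodularProperties}(2) lets one drop $u_1 = \psi(v_1)$ from the subscript after replacing the remaining $u_i$ by $u_i' = u_i - u_1\lambda(w_1,u_i)$. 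This is what reduces the length of the subscript by one and lets the induction hypothesis apply. No ``orthogonal transvections to restore $\mu = 0$'' are needed at this stage: the subscript elements are never required to be isotropic, and the link elements already lie in $\calI(\,\cdot\,,\mu)$.

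In short, your outline is right, but the two lemmas that carry the weight --- Proposition~\ref{PropLemma6.6ForV} (set-preserving normalisation of a single $\lambda$-unimodular vector) and Lemma~\ref{LemLambdaUnimodularWitness} (producing a witness inside $\langle e_g,f_g\rangle$) --- are missing from your plan, and Lemma~\ref{Lem6.6MvdK} cannot substitute for them.
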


This is the natural generalisation of Theorem~\ref{ThmGLConnectivity} to the case of quadratic modules. (Only considering $N$'s of the form $M \oplus H^{\infty}$ is not sufficient for our proof of Lemma~\ref{Lem6.9MvdK}, see Remark~\ref{RmkMInftyN}.) We can write $N$ as $Q \oplus H^{g(M)} \oplus H^n$ for some $n \geq 1$, where $H^{g(M)}$ is the hyperbolic part of $M$ and $Q$ is some quadratic module. With this notation we have $P \subseteq Q \oplus H^{n-1}$, where $H^{n-1}$ denotes the last $n-1$ copies of~$H$ in $H^n \subseteq N$. In particular, $P$ is not necessarily contained in~$Q$.

The proof is an~adaptation of the proof of Theorem~\ref{ThmGLConnectivity} for which we use the following results.

\begin{prop} \label{PropLemma6.6ForV}
    Let $N$ be a~quadratic module with $H^k \subseteq N$ for $k \geq \usr(R)$. For a~$\lambda$-unimodular element $v \in N$ there is an~automorphism $\phi \in U(N)$ such that the projection of $\phi(v)$ to $H^k \subseteq N$ is $\lambda$-unimodular and for every subset $S \subseteq (H^k)^{\perp}$ the automorphism~$\phi$ fixes $S \oplus \langle e_1, \ldots, e_k \rangle$ as a~set.
\end{prop}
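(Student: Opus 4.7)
The plan is to adapt the proof of Lemma~\ref{Lem6.6MvdK} to the single-vector case, while restricting to orthogonal transvections whose second argument lies in $(H^k)^\perp$---these are precisely the transvections compatible with the required fixing condition. First I would decompose $N = H^k \oplus (H^k)^\perp$ and write $v = \sum_{i=1}^k (e_i A_i + f_i B_i) + v_w$ with $v_w \in (H^k)^\perp$. Choosing $v' \in N$ witnessing the $\lambda$-unimodularity of $v$ and writing $v' = \sum_i (e_i a_i + f_i b_i) + v'_w$, the identity
\[1 = \lambda(v', v) = \sum_{i=1}^k \bigl(\overline{a_i} B_i + \overline{b_i} \eps A_i\bigr) + \lambda(v'_w, v_w)\]
shows that the $(2k+1) \times 1$-block $A_v := (A_1, B_1, \ldots, A_k, B_k, \lambda(-, v_w))^\top$ for~$N$ is unimodular in the sense of Section~\ref{SecProofLem6.6}.

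Since $k \geq \usr(R) \geq \sr(R)$ we have $2k > \sr(R)$, so Proposition~\ref{PropThm1SnkVas} applies to $A_v$ after reordering its matrix part so that $A_1, \ldots, A_{\sr(R)}$ occupy the top $\sr(R)$ rows. This yields elements $m_1, \ldots, m_{\sr(R)} \in N$ such that the vector
\[(A_1 + \lambda(m_1, v_w), \ldots, A_{\sr(R)} + \lambda(m_{\sr(R)}, v_w), A_{\sr(R)+1}, \ldots, A_k, B_1, \ldots, B_k) \in R^{2k}\]
is unimodular. Because $v_w \in (H^k)^\perp$, the value $\lambda(m_i, v_w)$ only depends on the $(H^k)^\perp$-component of $m_i$, so I may replace each $m_i$ by this component and assume $m_i \in (H^k)^\perp$.

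I then set $\phi := \tau(e_{\sr(R)}, -\overline{\eps} m_{\sr(R)}, y_{\sr(R)}) \circ \cdots \circ \tau(e_1, -\overline{\eps} m_1, y_1)$, with each $y_i$ any representative of $\mu(-\overline{\eps} m_i)$; each transvection is well-defined since $\mu(e_i) = 0$ and $\lambda(e_i, -\overline{\eps} m_i) = 0$. A direct computation paralleling the inductive formula in the proof of Lemma~\ref{Lem6.6MvdK} shows that the $H^k$-projection of $\phi(v)$ equals the unimodular vector above, left-multiplied by a product of two invertible triangular matrices with $1$'s on the diagonal: a lower-triangular factor from the cross-terms $\lambda(m_j, m_l) B_l$ produced by the sequential perturbation of $v_w$, and an upper-triangular factor from the $y_i$-corrections. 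Hence this projection is unimodular, equivalently $\lambda$-unimodular by Lemma~\ref{LemNonSingular}.

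For the fixing condition, each transvection $\tau(e_i, -\overline{\eps} m_i, y_i)$ with $m_i \in (H^k)^\perp$ fixes every $e_j$ (as $\lambda(e_i, e_j) = 0 = \lambda(m_i, e_j)$) and sends any $s \in (H^k)^\perp$ to $s + e_i \lambda(m_i, s) \in s + \langle e_1, \ldots, e_k \rangle$; these properties are preserved under composition, so $\phi$ preserves $S \oplus \langle e_1, \ldots, e_k \rangle$ setwise for every $S \subseteq (H^k)^\perp$. The main technical obstacle is the bookkeeping for the cross-terms arising from sequentially composing the transvections (each perturbation of $v_w$ feeds into the next $\lambda(m_{i+1}, v_w)$), and verifying that these terms together with the $y_i$-corrections assemble into invertible triangular factors preserving unimodularity of the $H^k$-projection; this is exactly analogous to the corresponding step in the proof of Lemma~\ref{Lem6.6MvdK}.
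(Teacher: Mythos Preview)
Your proposal is correct and follows essentially the same approach as the paper: form the $2k\times 1$-block associated to $v$, apply Proposition~\ref{PropThm1SnkVas} to obtain elements of $(H^k)^\perp$ that make the matrix part unimodular, and realise the corresponding block operation by a composite of orthogonal transvections $\tau(e_i,-\overline\eps m_i,\cdot)$, exactly as in the proof of Lemma~\ref{Lem6.6MvdK}. The paper's proof is terser---it simply invokes the block/transvection correspondence from Lemma~\ref{Lem6.6MvdK} rather than re-deriving the triangular-matrix bookkeeping---but the content is the same. Two cosmetic differences: the paper takes the block directly over $Q=(H^k)^\perp$ (so the $m_i$ land in $Q$ automatically), whereas you take it over $N$ and then project, which is harmless since $\lambda(m_i,v_w)$ depends only on the $(H^k)^\perp$-component; and the paper perturbs all $k$ of the $A_i$-coordinates while you perturb only the first $\sr(R)$ of them, matching the equality hypothesis of Proposition~\ref{PropThm1SnkVas} exactly rather than its (equally valid) $\leq$ version.
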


Note that in comparison with Lemma~\ref{Lem6.6MvdK} the bound for~$k$ in the above proposition is slightly lower than in the lemma. Hence, we get a~weaker conclusion here, having to restrict to more copies of the hyperbolic module~$H$ to get $\lambda$-unimodularity, and we cannot guarantee that the image of~$v$ under~$\phi$ lands outside certain copies of~$H$.

Saying that the automorphism~$\phi$ fixes $S \oplus \langle e_1, \ldots, e_k \rangle$ as a~set for every~$S \subseteq (H^k)^{\perp}$ is the same as saying that it is the identity on the associated graded for the filtration $0 \leq \langle e_1, \ldots, e_k \rangle \leq (H^k)^{\perp} \oplus \langle e_1, \ldots, e_k \rangle$ but we prefer the above formulation as this is of the form we use later on.

\begin{proof}
    We adapt the ideas of the first part of the proof of Lemma~\ref{Lem6.6MvdK}. We can write $N = Q \oplus H^k$ for some quadratic module~$Q$. Then
        \[v = p + \sum_{i = 1}^k e_i A_i + f_i B_i \quad \text{ for } p \in Q \text{ and } A_i, B_i \in R.\]
    As $v$ is $\lambda$-unimodular, there is
        \[w = q + \sum_{i = 1}^k e_i a_i + f_i b_i \quad \text{ for } q \in Q \text{ and } a_i, b_i \in R\]
    satisfying
    \begin{eqnarray*}
        1
        & =
        & \lambda(w, v) = (q, a_1, b_1, \ldots, b_k)
        \left(\begin{tabular}{c|cc|c}
            $\lambda|_Q$ & $0$ & $0$ & $\cdots$ \\
            \hline
            $0$ & $0$ & $1$ & \multirow{2}{*}{$\cdots$} \\
            $0$ & $\eps$ & $0$ & \\
            \hline
            $\vdots$ & \multicolumn{2}{c|}{$\vdots$} & $\ddots$
        \end{tabular}\right)
    \begin{pmatrix}
        p \\
        A_1 \\
        B_1 \\
        \vdots \\
        B_k
    \end{pmatrix} \\
        & =
    &   \lambda(q, p) + \sum_{i = 1}^k a_i B_i + \eps b_i A_i.
    \end{eqnarray*}
    Hence, using the notation from the proof of Lemma~\ref{Lem6.6MvdK}, the $2k \times 1$-block for~$Q$ associated to~$v$
    \[A_v = \begin{pmatrix}
        A_1 \\
        B_1 \\
        \vdots \\
        B_k \\
        \lambda(-, p)
    \end{pmatrix}\]
    is unimodular. Since $k  \geq \usr(R)$ by Proposition~\ref{PropThm1SnkVas} there are $p_1, \ldots, p_k \in Q$ such that for $m = (p_1, 0, \ldots, p_k, 0)$ we get
    \[\left(\begin{tabular}{c|c}
        $\mathds{1}_{2k}$ & $m^{\top}$ \\
        \hline
        $0$ & $1$
    \end{tabular}\right)
    \begin{pmatrix}
        A_1 \\
        B_1 \\
        \vdots \\
        B_k \\
        \lambda(-, p)
    \end{pmatrix}
    =
    \begin{pmatrix} b \\ u \end{pmatrix},\]
    where the vector~$b \in H^k$ is unimodular. As in the proof of Lemma~\ref{Lem6.6MvdK} this application of Proposition~\ref{PropThm1SnkVas} involves reordering the basis of the matrix part. Using the correspondence between elements in quadratic modules and their associated blocks explained in the proof of Lemma~\ref{Lem6.6MvdK}, multiplication with the above matrix is the required automorphism~$\phi$. Note that the bottom entry of $v$ stays fixed under $\phi$ and thus, for any $S \subseteq (H^k)^{\perp} = Q$ the automorphism~$\phi$ fixes $S \oplus \langle e_1, \ldots, e_g \rangle$ as a~set.
\end{proof}

\begin{lem} \label{LemLambdaUnimodularWitness}
    Let $N$ be a~quadratic module with $H^k \subseteq N$ for some $k$ and $v \in N$ so that the projection of~$v$ to~$H^{k - 1} \oplus 0 \subseteq H^k \subseteq N$ is $\lambda$-unimodular. There are $w \in \langle e_k, f_k \rangle$, $u \in H^{k - 1}$, and $x \in \mu(u)$ such that $\lambda(w, \tau(e_k, u, x)(v)) = 1$ and for every subset $S \subseteq (H^k)^{\perp}$ the transvection $\tau(e_k, u, x)$ fixes $S \oplus \langle e_1, \ldots, e_k \rangle$ as a~set.
\end{lem}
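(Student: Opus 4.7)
The plan is to exploit the $\lambda$-unimodularity of the projection $v_H$ of $v$ to $H^{k-1}$ in order to adjust the $e_k$-coefficient of $v$ through a transvection, leaving a residual that pairs against an explicit $w \in \langle e_k, f_k \rangle$ to give $1$. I would first decompose $N = (H^k)^{\perp} \oplus H^{k-1} \oplus \langle e_k, f_k \rangle$ (with $H^{k-1}$ the first $k-1$ hyperbolic pairs inside $H^k$) and write $v = v_0 + v_H + e_k\alpha + f_k\beta$ with $v_0 \in (H^k)^{\perp}$ and $\alpha, \beta \in R$. Since $\lambda$ restricts to a non-singular form on $H^{k-1}$, Lemma~\ref{LemNonSingular} applied to the hypothesis yields $u_0 \in H^{k-1}$ with $\lambda(u_0, v_H) = 1$; I would then set
    \[u := u_0 \cdot \overline{\eps\alpha - 1} \in H^{k-1}, \qquad \text{pick any } x \in \mu(u), \qquad w := e_k\overline{x} + f_k \in \langle e_k, f_k \rangle,\]
noting $\lambda(u, v_H) = \eps\alpha - 1$ by antilinearity of $\lambda$ in the first slot. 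The admissibility conditions $\mu(e_k) = 0$ and $\lambda(e_k, u) = 0$ for the transvection hold automatically since $u \in H^{k-1}$ is orthogonal to $\langle e_k, f_k \rangle$.

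The pairing identity $\lambda(w, \tau(e_k, u, x)(v)) = 1$ then reduces to a direct calculation. Since $u$ is orthogonal to both $(H^k)^{\perp}$ and $\langle e_k, f_k \rangle$, one has $\lambda(e_k, v) = \beta$ and $\lambda(u, v) = \lambda(u, v_H) = \eps\alpha - 1$, so the defining formula for the transvection gives
    \[\tau(e_k, u, x)(v) = v + u\beta - e_k\alpha + e_k\overline{\eps} - e_k\overline{\eps}x\beta\]
(using $\overline{\eps}\eps = 1$). Pairing with $w$, and using $\lambda(f_k, e_k) = \eps$, $\lambda(e_k, f_k) = 1$, together with the above orthogonalities, one computes $\lambda(w, v) = \eps\alpha + x\beta$ and a total of $(\eps\alpha + x\beta) - \eps\alpha + 1 - x\beta = 1$.

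For the setwise-fixing claim, the assumption $u \in H^{k-1}$ forces $\lambda(e_k, s) = \lambda(u, s) = 0$ for every $s \in S \subseteq (H^k)^{\perp}$, so $\tau(e_k, u, x)$ fixes each such $s$ pointwise, while for $i \leq k$ the image $\tau(e_k, u, x)(e_i)$ differs from $e_i$ only by an $R$-multiple of $e_k$. This gives the inclusion $\tau(e_k, u, x)(S \oplus \langle e_1, \ldots, e_k \rangle) \subseteq S \oplus \langle e_1, \ldots, e_k \rangle$, which upgrades to equality because the transvection is an automorphism. The only genuine subtlety is that the constraint $x \in \mu(u)$ prevents simply taking $x = 0$, so an unwanted correction term $e_k\overline{\eps}x\beta$ appears after the transvection; the asymmetric choice $w = e_k\overline{x} + f_k$ is tailored precisely to absorb this correction in the pairing.
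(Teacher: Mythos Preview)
Your proof is correct and follows essentially the same route as the paper's: produce a witness $u_0\in H^{k-1}$ to the $\lambda$-unimodularity of $v_H$, take $u$ to be a suitable scalar multiple of it, and then pair with a vector of the form $e_k\cdot(\text{scalar}) + f_k$. The only cosmetic differences are that you invoke Lemma~\ref{LemNonSingular} explicitly to force the witness into $H^{k-1}$ (the paper asserts this without comment), and your scalar for $u$ is $\overline{\eps\alpha-1}$ rather than the paper's $\overline{\eps\alpha-1}\,\overline{\eps}$, with $w$ adjusted accordingly; both choices make the computation go through.
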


\begin{proof}
    Since the projection of~$v$ to~$H^{k - 1}$ is $\lambda$-unimodular there is an~element $z \in H^{k - 1}$ such that $\lambda(z, v) = 1$. For $u := (\overline{\lambda(f_k, v) - 1})\overline{\eps} z$ and any $x \in \mu(u)$ we have
    \begin{align*}
        \tau(e_k, u, x)(v) & = v + u \lambda(e_k, v) - e_k \overline{\eps} \lambda(u,v) - e_k \overline{\eps}x \lambda(e_k, v) \\
         & = v + u \lambda(e_k, v) + e_k (1 - \lambda(f_k, v) - \overline{\eps}x \lambda(e_k, v)).
    \end{align*}
    Since u is contained in $H^{k - 1}$ the second summand does not affect the coefficients of $e_k$ and $f_k$. The third summand changes the coefficient of $e_k$ to be $1 - \overline{\eps} x \lambda(e_k, v)$ and leaves all other coefficients fixed. Defining $w := \overline{x} \eps e_k + f_k$ we get
    \begin{align*}
        \lambda(w, \tau(e_k, u, x)(v)) & = \lambda(\overline{x}\eps e_k + f_k, (1 - \overline{\eps} x \lambda(e_k, v)) e_k + \lambda(e_k, v) f_k) \\
         & = \lambda(\overline{x}\eps e_k, \lambda(e_k, v) f_k) + \lambda(f_k, (1 - \overline{\eps}x\lambda(e_k, v)) e_k) \\
         & = \overline{\eps} x \lambda(e_k, v) + 1 - \overline{\eps} x \lambda(e_k, v) \\
         & = 1.
    \end{align*}
    Thus, choosing $u$, $x$, and $w$ as above shows the claim since the constructed transvection fixes $S \oplus \langle e_1, \ldots, e_k \rangle$ as a~set for every subset $S \subseteq (H^k)^{\perp}$.
\end{proof}

\begin{proof} [Proof of Theorem~\ref{ThmLambdaUnimodularConnectivity}.]
    Analogous to the proof of Theorem~\ref{ThmGLConnectivity} we will also show the following statements:
    \begin{enumerate}[\hspace{0.65cm}(a)]
    \item $\calO\Big(\calI\big(P \oplus (\langle e_1, \ldots, e_g \rangle \cup \langle e_1, \ldots, e_g \rangle + e_{g+1}), \mu \big)\Big) \cap \calU(N, \lambda)$ is $(g - \usr(R))$-connected,
    \item $\calO\Big(\calI \big(P \oplus (\langle e_1, \ldots, e_g \rangle \cup \langle e_1, \ldots, e_g \rangle + e_{g+1}), \mu \big)\Big) \cap \calU(N, \lambda)_{(v_1, \ldots, v_k)}$ is $(g - \usr(R) - k)$-connected for all $(v_1, \ldots, v_k)$ in $\calU(N, \lambda)$.
    \end{enumerate}
    Here, we write $N = Q \oplus H^g \oplus H$ for some quadratic module~$Q$ and $(e_{g+1}, f_{g+1})$ for the basis of the last copy of the hyperbolic~$H$ in~$N$.

    The proof is by induction on~$g$. Note that statements~(1), (2), and (b) all hold for $g < \usr(R)$ so we can assume $g \geq \usr(R)$. Statement~(a) holds for $g < \usr(R) - 1$ so we can assume $g \geq \usr(R) - 1$ when proving this statement. The structure of the proof is the same as in the proof of Theorem~\ref{ThmGLConnectivity}: We start by proving (b) which enables us to deduce (2). We will then prove statements~(1) and (a) simultaneously by applying statement~(2).

    In the following we write $E_g = \langle e_1, \ldots, e_g \rangle$.

    \underline{Proof of (b).} For $Y := P \oplus \left(E_g \cup (E_g + e_{g+1})\right)$ we write $F := \calO(\calI(Y, \mu)) \cap \calU(N, \lambda)_{(v_1, \ldots, v_k)}$. Let $d := g - \usr(R) - k$, so we have to show that $F$ is $d$-connected.

    For $g = \usr(R)$ the only case to consider is $k = 1$, where we have to show that $F$ is non-empty. By Proposition~\ref{PropLemma6.6ForV} there is an~automorphism $\phi \in U(N)$ such that the projection of $\phi(v_1)$ to $H^g \subseteq N$ is $\lambda$-unimodular and $\phi$ fixes $Y$ as a~set. Then the sequence $(\phi(v_1)|_{H^g}, e_{g+1})$ is $\lambda$-unimodular in~$N$. In particular, there is an~element $w_1 \in H^g$ such that $\lambda(w_1, \phi(v_1)|_{H_g}) = 1$ and $\lambda(w_1, e_{g+1}) = 0$. Now Lemma~\ref{LemLambdaUnimodularProperties}~(2) applied to $u_1 = e_{g+1}$ shows that the sequence $(\phi(v_1), e_{g+1})$ is $\lambda$-unimodular. Hence, the sequence $(v_1, \phi^{-1}(e_{g+1}))$ is also $\lambda$-unimodular. By construction we have $\phi^{-1}(e_{g+1}) \in Y$ and thus, $F$ is non-empty as it contains the element~$\phi^{-1}(e_{g+1})$.

    Now consider the case $g > \usr(R)$. By Proposition~\ref{PropLemma6.6ForV} there is an~automorphism~$\phi$ of $N$ such that the projection of $\phi(v_1)$ to $H^{g-1}$ is $\lambda$-unimodular. Using Lemma~\ref{LemLambdaUnimodularWitness} we get $u \in H^{g-1}$, $x \in \mu(u)$, and $w_1 \in \langle e_g, f_g \rangle$ such that $\lambda(w_1, \tau(e_g, u, x)(\phi(v_1))) = 1$. By construction, both $\phi$ and $\tau(e_g, u, x)$ fix $P \oplus (E_g \cup (E_g + e_{g + 1}))$ as a~set. Hence, the automorphism $\psi := \tau(e_g, u, x) \circ \phi$ defines an~isomorphism
        \[F = \calO(\calI(Y, \mu)) \cap \calU(N, \lambda)_{(v_1, \ldots, v_k)} \overset{\cong}{\longrightarrow} \calO(\calI(Y, \mu)) \cap \calU(N, \lambda)_{(\psi(v_1), \ldots, \psi(v_k))} = \psi(F).\]
    Writing $u_i := \psi(v_i)$ we have $\lambda(w_1, u_1) = 1$ with $w_1 \in \langle e_g, f_g \rangle$. This argument only works if $g > \usr(R)$ so we had to treat the case $g = \usr(R)$ separately.

    We want to use Lemma~\ref{Lem2.13vdK}~(1) to show that $\psi(F)$, and hence $F$, is $d$-connected. We define
        \[X := \calI(\{ v \in Y \ | \ v|_{\langle e_g, f_g \rangle} = 0 \}, \mu) = \calI(P \oplus (E_{g-1} \cup (E_{g-1} + e_{g+1})), \mu)\]
    and $u_i^{\prime} := u_i - u_1 \lambda(w_1, u_i)$ for $i > 1$, forcing $\lambda(w_1, u_i^{\prime}) = 0$. We have
    \begin{eqnarray*}
    \calO(X) \cap \psi(F) & = & \calO(X) \cap \calU(N, \lambda)_{(u_1, \ldots, u_k)} \\
    & = & \calO(\calI(P \oplus (E_{g-1} \cup (E_{g-1} + e_{g+1})), \mu)) \cap \calU(N, \lambda)_{(u_1, u_2^{\prime}, \ldots, u_k^{\prime})} \\
    & = & \calO(\calI(P \oplus (E_{g-1} \cup (E_{g-1} + e_{g+1})), \mu)) \cap \calU(N, \lambda)_{(u_2^{\prime}, \ldots, u_k^{\prime})},
    \end{eqnarray*}
    where the second equality holds as the span of $u_1, u_2^{\prime}, \ldots, u_k^{\prime}$ is the same as the span of $u_1, u_2, \ldots, u_k$ and the third equality can be seen as follows: The inclusion $\subseteq$ of the second line into the third is obvious. For the other inclusion, $\supseteq$, let $(x_1, \ldots, x_l) \in \calO(\calI(P \oplus (E_{g-1} \cup (E_{g-1} + e_{g+1})), \mu)) \cap \calU(N, \lambda)_{(u_2^{\prime}, \ldots, u_k^{\prime})}$. We have $\lambda(w_1, x_i) = 0$ since $w_1 \in \langle e_g, f_g \rangle$ and $\lambda(w_1, u_i^{\prime}) = 0$ by construction of the $u_i^{\prime}$. Thus, by Lemma~\ref{LemLambdaUnimodularProperties}~(2) the sequence $(x_1, \ldots, x_l, u_1, u_2^{\prime}, \ldots, u_k^{\prime})$ is $\lambda$-unimodular. In particular, the sequence $(x_1, \ldots, x_l)$ is an~element of $\calO(\calI(P \oplus (E_{g-1} \cup (E_{g-1} + e_{g+1})), \mu)) \cap \calU(N, \lambda)_{(u_1, u_2^{\prime}, \ldots, u_k^{\prime})}$.

    Thus, by the induction hypothesis $\calO(X) \cap \psi(F)$ is $d$-connected. Analogously, for $(w_1, \ldots, w_l) \in \psi(F) \setminus \calO(X)$ we get
    \begin{eqnarray*}
        \calO(X) \cap \psi(F)_{(w_1, \ldots, w_l)}
        & = & \calO(X) \cap \calU(N, \lambda)_{(u_1, \ldots, u_k, w_1, \ldots, w_l)} \\
        & = & \calO(\calI(P \oplus (E_{g-1} \cup (E_{g-1} + e_{g+1})), \mu)) \cap \calU(N, \lambda)_{(u_2^{\prime}, \ldots, u_k^{\prime}, w^{\prime}_1, \ldots, w^{\prime}_l)},
    \end{eqnarray*}
    which is $(d-l)$-connected by the induction hypothesis. Therefore, Lemma~\ref{Lem2.13vdK}~(1) shows that $\psi(F)$ is $d$-connected. Since $F$ and $\psi(F)$ are isomorphic, $F$ is therefore also $d$-connected.

    \underline{Proof of (2).} Let us write
        \[X := \calI\left(P \oplus (E_{g - 1} \cup (E_{g - 1} + e_g)), \mu \right).\]
    Then we have
    \begin{eqnarray*}
        & & \calO(X) \cap \left(\calO(\calI(P \oplus E_g, \mu)) \cap \calU(N, \lambda)_{(v_1, \ldots, v_k)}\right) \\
        & = & \calO\left( \calI(P \oplus (E_{g - 1} \cup (E_{g - 1} + e_g)), \mu ) \right) \cap \calU(N, \lambda)_{(v_1, \ldots, v_k)},
    \end{eqnarray*}
    which is $(d-1)$-connected by (b).

    Similarly, for $(w_1, \ldots, w_l) \in \calO(\calI(P \oplus E_g, \mu)) \cap \calU(N, \lambda)_{(v_1, \ldots, v_k)} \setminus \calO(X)$ we have
    \begin{eqnarray*}
        & & \calO(X) \cap \left(\calO(\calI(P \oplus E_g, \mu) \cap \calU(N, \lambda)_{(v_1, \ldots, v_k)}\right)_{(w_1, \ldots, w_l)} \\
        & = & \calO(X) \cap \calU(N, \lambda)_{(v_1, \ldots, v_k, w_1, \ldots, w_l)},
    \end{eqnarray*}
    which is $(d - l - 1)$-connected by the above. Hence, by Lemma~\ref{Lem2.13vdK}~(1) the claim follows.

    \underline{Proof of (1) and (a).} Note that we now only assume $g \geq \usr(R) - 1$. By induction let us assume that statement~(a) holds for $P \oplus (E_{g - 1} \cup (E_{g-1} + e_g))$ and we want to show it for $P \oplus (E_g \cup (E_g + e_{g+1}))$. Before we finish the induction for~(a) we will show that this already implies statement~(1) for $P \oplus E_g$. For this let $X$ be as in the proof of~(2) and $d := g - \usr(R)$. Then
    \begin{eqnarray*}
        & & \calO(X) \cap \left(\calO(\calI(P \oplus E_g, \mu)) \cap \calU(N, \lambda)\right) \\
        & = & \calO(\calI(P \oplus (E_{g - 1} \cup (E_{g - 1} + e_g)), \mu)) \cap \calU(N, \lambda)
    \end{eqnarray*}
    is $(d - 1)$-connected by (a). The complex $\calO(X) \cap \left(\calO(\calI(P \oplus E_g, \mu)) \cap \calU(N, \lambda)\right)_{(v_1, \ldots, v_m)}$ is $(d - m - 1)$-connected as we have already shown in the proof of~(2). Thus, $\calO(\calI(P \oplus E_g, \mu)) \cap \calU(N, \lambda)$ is $(g - \usr(R) - 1)$-connected by Lemma~\ref{Lem2.13vdK}~(1) which proves statement~(1).

    To prove (a) we will apply Lemma~\ref{Lem2.13vdK}~(2) for $X = \calI(P \oplus E_g, \mu)$ and $y_0 = e_{g+1}$. Consider
        \[(v_1, \ldots, v_k) \in \calO(\calI(P \oplus (E_g \cup (E_g + e_{g+1})), \mu)) \cap \calU(N, \lambda) \setminus \calO(X).\]
    Without loss of generality we may suppose that $v_1 \notin X$ as otherwise we can permute the $v_i$. By definition of~$X$ the coefficient of the $e_{g+1}$-coordinate of~$v_1$ is therefore $1$. Using Lemma~\ref{LemLambdaUnimodularProperties}~(2) as in part~(b) above we have
        \[\calO(X) \cap \calO(\calI(P \oplus (E_g \cup (E_g + e_{g+1})), \mu)) \cap \calU(N, \lambda)_{(v_1, \ldots, v_k)} = \calO(X) \cap \calU(N, \lambda)_{(v^{\prime}_2, \ldots, v^{\prime}_k)},\]
    where $v^{\prime}_i := v_i - v_1 \lambda(f_{g+1}, v_i)$ is chosen so that the coefficient of the $e_{g+1}$-coordinate of~$v^{\prime}_i$ is~$0$ for all~$i > 1$. This is $(d - k)$-connected by~(1) for $k = 1$ and by~(2) for $k \geq 2$. By construction we have
        \[\calO(X) \cap \calO(\calI(P \oplus (E_g \cup (E_g + e_{g+1})), \mu)) \cap \calU(N, \lambda) \subseteq (\calO(\calI(P \oplus (E_g \cup (E_g + e_{g+1})), \mu)) \cap \calU(N, \lambda))_{(e_{g+1})}\]
    and thus, Lemma~\ref{Lem2.13vdK}~(2) implies that $\calO(\calI(P \oplus (E_g \cup (E_g + e_{g+1})), \mu)) \cap \calU(N, \lambda)$ is $(g - \usr(R))$-connected which proves~(a).

    Note that when showing statement~(a) for $P \oplus (E_g \cup (E_g + e_{g+1}))$ we only used statement~(1) for $P \oplus E_g$ which follows from (a) for $P \oplus (E_{g-1} \cup (E_{g-1} + e_g))$ so this is indeed a~valid induction to show both statements~(1) and (a).
\end{proof}

In the following we write $\calU(M, \lambda, \mu) := \calO(\calI(M, \mu)) \cap \calU(M, \lambda)$.

\begin{cor} \label{CorLem6.8MvdK}
    Let $M$ and $N$ be quadratic modules with $M \oplus H \subseteq N$.
    \begin{enumerate}
        \item $\calO(M) \cap \calU(N, \lambda, \mu)$ is $(g(M) - \usr(R) - 1)$-connected,
        \item $\calO(M) \cap \calU(N, \lambda, \mu)_v$ is $(g(M) - \usr(R) - |v| - 1)$-connected for every $v \in \calU(N, \lambda, \mu)$,
        \item $\calO(M) \cap \calU(N, \lambda, \mu) \cap \calU(N, \lambda)_v$ is $(g(M) - \usr(R) - |v| - 1)$-connected for every $v \in \calU(N, \lambda)$.
    \end{enumerate}
\end{cor}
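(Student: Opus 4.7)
The plan is to deduce all three statements as formal consequences of Theorem~\ref{ThmLambdaUnimodularConnectivity} together with Lemma~\ref{Lem2.13vdK}~(1). Writing $M \cong P \oplus H^g$ with $g = g(M)$, I would set
    \[X := \calI(P \oplus \langle e_1, \ldots, e_g\rangle, \mu) \subseteq \calI(M, \mu),\]
so that the complex appearing in Theorem~\ref{ThmLambdaUnimodularConnectivity} is exactly $\calO(X) \cap \calU(N, \lambda)$ (and its link versions). The role of $X$ is that every element of $X$ is already isotropic, so inside $\calO(X)$ the $\mu$-constraint of $\calU(N, \lambda, \mu)$ is automatic.

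For statement~(1), set $F := \calO(M) \cap \calU(N, \lambda, \mu)$ and $d := g(M) - \usr(R) - 1$. Since $M \oplus H \subseteq N$ and $\mu$ on $M$ is the restriction of $\mu$ on $N$, we have $\calO(M) \cap \calO(\calI(N,\mu)) = \calO(\calI(M,\mu))$, and $F$ clearly satisfies the chain condition. Because $X \subseteq \calI(N, \mu)$, we get
    \[\calO(X) \cap F = \calO(X) \cap \calU(N, \lambda),\]
which is $d$-connected by Theorem~\ref{ThmLambdaUnimodularConnectivity}~(1). For any sequence $w \in F$, the same reasoning identifies
    \[\calO(X) \cap F_w = \calO(X) \cap \calU(N, \lambda)_w,\]
which is $(d - |w|)$-connected by Theorem~\ref{ThmLambdaUnimodularConnectivity}~(2). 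An application of Lemma~\ref{Lem2.13vdK}~(1) then gives (1).

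Statements~(2) and (3) go through the same machinery with $F$ replaced by $\calO(M) \cap \calU(N, \lambda, \mu)_v$ and $\calO(M) \cap \calU(N, \lambda, \mu) \cap \calU(N, \lambda)_v$ respectively, and $d := g(M) - \usr(R) - |v| - 1$. In both cases the chain condition is inherited from $\calU(N, \lambda)$. The key simplification is that since $X$ is isotropic and either $v$ is isotropic (case~(2)) or the extra isotropy constraint only affects the $w$-part (case~(3)), one obtains
    \[\calO(X) \cap F = \calO(X) \cap \calU(N, \lambda)_v \quad \text{and} \quad \calO(X) \cap F_w = \calO(X) \cap \calU(N, \lambda)_{(w, v)},\]
so Theorem~\ref{ThmLambdaUnimodularConnectivity}~(2) supplies connectivity $d$ and $(d-|w|)$ respectively, and Lemma~\ref{Lem2.13vdK}~(1) concludes.

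The only step requiring any thought is the bookkeeping that identifies the various intersections with the posets appearing in Theorem~\ref{ThmLambdaUnimodularConnectivity}; in particular, for statement~(3) one must observe that although $v$ need not be isotropic, the sequences $w$ in $F$ and the elements of $\calO(X)$ \emph{are} isotropic, so the $\mu$-condition imposed by $\calU(N,\lambda,\mu)$ reduces to a condition on $w$ alone and can be absorbed into $\calO(\calI(M,\mu))$. I do not expect a genuine obstacle here: this corollary is a purely formal unwinding of Theorem~\ref{ThmLambdaUnimodularConnectivity} through the chain-condition lemma.
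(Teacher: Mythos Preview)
Your proposal is correct and takes essentially the same approach as the paper's own proof: set $W = X = \calI(P \oplus \langle e_1,\ldots,e_g\rangle,\mu)$, observe that intersecting with $\calO(X)$ makes the $\mu$-constraint automatic so that $\calO(X)\cap F$ and $\calO(X)\cap F_w$ reduce to the posets of Theorem~\ref{ThmLambdaUnimodularConnectivity}, and then apply Lemma~\ref{Lem2.13vdK}~(1). The paper additionally remarks that (2) is a special case of (3), which slightly shortens the write-up.
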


For the special case where the quadratic module $M$ is a~direct sum of hyperbolic modules~$H^n$, Corollary~\ref{CorLem6.8MvdK} has been proven by Mirzaii--van der Kallen in~\cite[Lemma~6.8]{MvdK}

\begin{proof}
    We write $g = g(M)$ and $M = P \oplus H^g$.

    For (1) let $W := \calI(P \oplus \langle e_1, \ldots, e_g \rangle, \mu)$ and $F:= \calO(M) \cap \calU(N, \lambda, \mu)$. Then we have
        \[\calO(W) \cap F = \calO(W) \cap \calU(N, \lambda) \text{ and } \calO(W) \cap F_u = \calO(W) \cap \calU(N, \lambda)_u,\]
    for every $u \in \calU(M, \lambda, \mu)$. Thus, by Theorem~\ref{ThmLambdaUnimodularConnectivity} the poset $\calO(W) \cap F$ is $(g - \usr(R) -1)$-connected and $\calO(W) \cap F_u$ is $(g - \usr(R) - |u| - 1)$-connected. Now, by Lemma~\ref{Lem2.13vdK}~(1) the poset~$F$ is $(g - \usr(R) - 1)$-connected.

    To show (3) we choose $W$ as above and $F$ as the complex $\calO(M) \cap \calU(N, \lambda, \mu) \cap \calU(N, \lambda)_v$. As before, using Lemma~\ref{Lem2.13vdK}~(1) yields the claim. Note that statement~(2) is a~special case of statement~(3).
\end{proof}

\begin{lem} \label{Lem6.9MvdK}
    For an~element $(v_1, \ldots, v_k) \in \calU(M, \lambda, \mu)$ the poset $\calO(\langle v_1, \ldots, v_k \rangle^{\perp}) \cap \calU(M, \lambda, \mu)_{(v_1, \ldots, v_k)}$ is $(g(M) - \usr(R) - k - 1)$-connected, where $\perp$ denotes the orthogonal complement with respect to $\lambda$.
\end{lem}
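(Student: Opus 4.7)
The plan is to identify the poset under consideration with $\calO(\langle v \rangle^\perp) \cap \calU(\langle v \rangle^\perp, \lambda, \mu)$ (viewing $\langle v \rangle^\perp$ as a sub-quadratic module of $M$) and then apply Corollary~\ref{CorLem6.8MvdK}~(1) to $\langle v \rangle^\perp$. The argument then reduces to two points: \textbf{(i)} identifying the two posets, and \textbf{(ii)} establishing the lower bound $g(\langle v \rangle^\perp) \geq g(M) - k$ on the Witt index.

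For (i), since $\mu$ obviously restricts from $M$ to $\langle v \rangle^\perp$, the only non-trivial task is to match the $\lambda$-unimodularity conditions. Fix $u_1, \ldots, u_l \in \langle v \rangle^\perp$. If $(u_1, \ldots, u_l, v_1, \ldots, v_k)$ is $\lambda$-unimodular in $M$, then the witnesses $z_i \in M$ corresponding to the $u_i$'s satisfy $\lambda(z_i, v_n) = 0$, so $z_i \in \langle v \rangle^\perp$, providing witnesses of $\lambda$-unimodularity in $\langle v \rangle^\perp$. Conversely, given such witnesses $z_i \in \langle v \rangle^\perp$ for $(u_1, \ldots, u_l)$, take any witnesses $w_n^0 \in M$ for $(v_1, \ldots, v_k)$ (which exist since $(v_1, \ldots, v_k) \in \calU(M, \lambda, \mu)$) and correct them by $w_n := w_n^0 - \sum_j z_j \lambda(w_n^0, u_j)$, just as in the proof of Lemma~\ref{LemLambdaUnimodularProperties}~(2); a direct check confirms that $(z_i, w_n)$ then witnesses $\lambda$-unimodularity of the concatenated sequence in $M$.

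For (ii), we may assume $g(M) \geq \usr(R) + k$, since otherwise the claimed connectivity bound is vacuous. Writing $M = P \oplus H^{g(M)}$, Lemma~\ref{Lem6.6MvdK} produces $\phi \in U(M)$ with $\phi(v_i) \in P \oplus H^k$; since $\phi$ is a quadratic automorphism of $M$, it induces an isomorphism from the poset of the lemma for $(v_1, \ldots, v_k)$ to that for $(\phi(v_1), \ldots, \phi(v_k))$, so we may assume $v_i \in P \oplus H^k$ outright. The remaining hyperbolic summand $H^{g(M) - k}$ is then $\lambda$-orthogonal to $\langle v \rangle$, and a short splitting argument shows $\langle v \rangle^\perp = (\langle v \rangle^\perp \cap (P \oplus H^k)) \oplus H^{g(M)-k}$, so $g(\langle v \rangle^\perp) \geq g(M) - k$. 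Applying Corollary~\ref{CorLem6.8MvdK}~(1) to the quadratic module $\langle v \rangle^\perp$ with ambient module $\langle v \rangle^\perp \oplus H^\infty$ (using that $\lambda$-unimodularity in this ambient coincides with $\lambda$-unimodularity in $\langle v \rangle^\perp$ for sequences in $\langle v \rangle^\perp$), combined with (i), gives connectivity at least $g(\langle v \rangle^\perp) - \usr(R) - 1 \geq g(M) - \usr(R) - k - 1$, as required. The most delicate step is the bookkeeping of witnesses in (i); the rest is a direct assembly of earlier results.
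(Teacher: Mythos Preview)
Your proof is correct and takes a genuinely different route from the paper's. The paper proceeds via Lemma~\ref{Lem2.13vdK}(1): after using Lemma~\ref{Lem6.6MvdK} to obtain witnesses $w_1,\ldots,w_k\in H^k$, it sets $W=\calI(\langle v_1,\ldots,v_k,w_1,\ldots,w_k\rangle^\perp,\mu)$ and $F=\calO(\langle v\rangle^\perp)\cap\calU(M,\lambda,\mu)_{(v_1,\ldots,v_k)}$, shows $\calO(W)\cap F$ and $\calO(W)\cap F_{(u_1,\ldots,u_l)}$ have the right connectivity via Corollary~\ref{CorLem6.8MvdK}(1) and (3) respectively (the latter requiring the decomposition $u_i=x_i+y_i$ and the observation that the $y_i$ need not be isotropic, which is precisely why part~(3) exists), and then invokes Lemma~\ref{Lem2.13vdK}(1). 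Your argument instead collapses the subscript entirely: the identification~(i) shows that the poset in question is literally $\calO(\langle v\rangle^\perp)\cap\calU(\langle v\rangle^\perp,\lambda,\mu)$, an \emph{unsubscripted} poset over the smaller quadratic module, so a single application of Corollary~\ref{CorLem6.8MvdK}(1) finishes. This is strictly simpler: you avoid Lemma~\ref{Lem2.13vdK}, the decomposition $u_i=x_i+y_i$, and in fact render part~(3) of Corollary~\ref{CorLem6.8MvdK} unnecessary for this lemma. Note that your direction ``RHS $\Rightarrow$ LHS'' in~(i) is exactly an instance of Lemma~\ref{LemLambdaUnimodularProperties}(2) (with the roles of the two sequences swapped), so you could cite it rather than redo the witness correction. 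Your approach also sidesteps the subtlety flagged in Remark~\ref{RmkMInftyN}(2), since you never need to place the $y_i$ in a specific ambient module.
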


For the special case where $M$ is a~sum of hyperbolic modules~$H^n$ this has been done by Mirzaii-van der Kallen in~\cite[Lemma~6.9]{MvdK}.

\begin{proof}
    Let $g = g(M)$ and $M = P \oplus H^g$. By Lemma~\ref{Lem6.6MvdK} we can assume without loss of generality that $v_1, \ldots, v_k \in P \oplus H^k$ and the projection to the hyperbolic $H^k$ is $\lambda$-unimodular. In particular, there are $w_1, \ldots, w_k \in H^k$ such that $\lambda(w_i, v_j) = \delta_{i, j}$.
    Defining
        \[W := \calI(\langle v_1, \ldots, v_k, w_1, \ldots, w_k \rangle^{\perp}, \mu) \text{ and } F := \calO(\langle v_1, \ldots, v_k \rangle^{\perp}) \cap \calU(M, \lambda, \mu)_{(v_1, \ldots, v_k)}\]
    we have
        \[\calO(W) \cap F = \calO(W) \cap \calU(M, \lambda, \mu)_{(v_1, \ldots, v_k)} = \calO(W) \cap \calU(M, \lambda, \mu),\]
    where the second equality holds by Lemma~\ref{LemLambdaUnimodularProperties}~(2) as $W \subseteq \langle w_1, \ldots, w_k \rangle^{\perp}$. By construction we have $H^{g-k} \subseteq W$. Hence, $\calO(W) \cap F$ is $(g - k - \usr(R) - 1)$-connected by Lemma~\ref{CorLem6.8MvdK}~(1). By Lemma~\ref{LemLambdaUnimodularProperties}~(1) we can write $M = \langle v_1, \ldots, v_k \rangle \oplus \langle w_1, \ldots, w_k \rangle^{\perp}$, where we mean a~direct sum of $R$-modules. Consider $(u_1, \ldots, u_l) \in F \setminus \calO(W)$. We can write $u_i = x_i + y_i$ for $x_i \in \langle v_1, \ldots, v_k \rangle$ and $y_i \in \langle w_1, \ldots, w_k \rangle^{\perp}$. Note that $(y_1, \ldots, y_l)$ is in $\calU(M, \lambda)$ but not necessarily in $\calU(M, \lambda, \mu)$. Using Lemma~\ref{LemLambdaUnimodularProperties}~(2) and (3) we have
        \[\calO(W) \cap F_{(u_1, \ldots, u_l)} = \calO(W) \cap \calU(M, \lambda, \mu) \cap \calU(M, \lambda)_{(y_1, \ldots, y_l)}\]
    which is $(g - k - \usr(R) - l - 1)$-connected by Lemma~\ref{CorLem6.8MvdK}~(3). Using Lemma~\ref{Lem2.13vdK}~(1) now finishes the proof.
\end{proof}

\begin{rmk} \label{RmkMInftyN}
    \leavevmode
    \begin{enumerate}
        \item We could apply Corollary~\ref{CorLem6.8MvdK}~(2) directly to $\calO(\langle v_1, \ldots, v_k \rangle^{\perp}) \cap \calU(M, \lambda, \mu)_{(v_1, \ldots, v_k)}$ using that $H^{g(M) - k} \subseteq \langle v_1, \ldots, v_k \rangle^{\perp}$. However, this would only imply that the complex is $(g(M) - \usr(R) - 2k - 1)$-connected.
        \item In the proof of Lemma~\ref{Lem6.9MvdK} we cannot assume that the $y_i$'s lie in $\langle v_1, \ldots, v_k, w_1, \ldots, w_k \rangle^{\perp} \oplus H^{\infty}$. Hence, we need Theorem~\ref{ThmLambdaUnimodularConnectivity} in the generality it is stated.
    \end{enumerate}
\end{rmk}

Let $V$ be a~set and $F \subseteq \calO(V)$. For a~non-empty set~$S$ we define the poset $F \langle S \rangle$ as
    \[F \langle S \rangle := \{((v_1, s_1), \ldots, (v_k, s_k)) \in \calO(V \times S) \ | \ (v_1, \ldots, v_k) \in F\}.\]

\begin{lem} \label{Lem7.2MvdK}
    Let $g(M) \geq \usr(R) + k$. For $((v_1, w_1), \ldots, (v_k, w_k)) \in \calH \calU(M)$ we define $V := \langle v_1, \ldots, v_k \rangle$, $W := \langle w_1, \ldots, w_k \rangle$, and $Y := V^{\perp} \cap W^{\perp}$. Then
    \begin{enumerate}
        \item $\calI \calU(M)_{(v_1, \ldots, v_k)} \cong \calI \calU(Y)\langle V \rangle$,
        \item $\calH \calU(M) \cap \calM \calU(M)_{((v_1, 0),  \ldots, (v_k, 0))} \cong \calH \calU(X)\langle V \times V \rangle$,
        \item $\calH \calU(M)_{((v_1, w_1), \ldots, (v_k, w_k))} \cong \calH \calU(Y)$.
    \end{enumerate}
\end{lem}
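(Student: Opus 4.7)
The plan is to exploit the orthogonal splitting $M = (V \oplus W) \oplus Y$ induced by the hyperbolic structure on $((v_1, w_1), \ldots, (v_k, w_k))$. Because $V \oplus W \cong H^k$ carries a~non-degenerate form, every $m \in M$ has a~unique decomposition $m = v(m) + w(m) + y(m)$ with $v(m) = \sum_i v_i \overline{\eps} \lambda(w_i, m) \in V$, $w(m) = \sum_i w_i \lambda(v_i, m) \in W$, and $y(m) = m - v(m) - w(m) \in Y$. All three claimed isomorphisms will be given by taking this decomposition componentwise, with inverse given by summation.

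For part~(1), starting from $(x_1, \ldots, x_m) \in \calI\calU(M)_{(v_1, \ldots, v_k)}$, the isotropy condition $\lambda(x_j, v_i) = 0$ implies $\lambda(v_i, x_j) = 0$ by the $\eps$-symmetry of~$\lambda$, and hence $w(x_j) = 0$. Thus $x_j = v(x_j) + y(x_j)$ and I would set $(x_j)_j \mapsto ((y(x_j), v(x_j)))_j$ as the isomorphism to $\calI\calU(Y)\langle V \rangle$. Parts~(2) and~(3) proceed by the same principle. In~(2), the $\calM\calU$-condition against $((v_i, 0))$ gives $\lambda(v_i, y_j) = 0$ in addition to the analogous statement for the $x_j$'s, killing the $W$-components of both and producing a~map to $\calH\calU(Y)\langle V \times V \rangle$. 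In~(3), the full hyperbolic condition against $((v_i, w_i))$ gives $\lambda(v_i, x_j) = \lambda(w_i, x_j) = \lambda(v_i, y_j) = \lambda(w_i, y_j) = 0$, so that both $x_j$ and $y_j$ lie entirely in~$Y$ and the induced isomorphism is the obvious identification $\calH\calU(M)_{((v_i, w_i))} \cong \calH\calU(Y)$.

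The main obstacle will be verifying that the two nontrivial conditions defining $\calI\calU$, namely $\lambda$-unimodularity and the property of spanning an~isotropic direct summand, transfer faithfully across the decomposition. For the former, given dual vectors in~$M$ witnessing $\lambda$-unimodularity of the extended sequence, one must correct them by subtracting components along the $w_i$ (which serve as the dual basis to the $v_i$) and then project to~$Y$ to extract dual vectors for the $y(x_j)$'s inside $Y$; this bookkeeping is of the same flavour as the proof of Lemma~\ref{LemLambdaUnimodularProperties}. For the direct summand condition, the fact that $V \oplus W$ is an~orthogonal direct summand of~$M$ reduces the statement to the corresponding one in~$Y$, since $\langle x_1, \ldots, x_m, v_1, \ldots, v_k \rangle \cong \langle y(x_j) \rangle \oplus V$ is a~direct summand of $Y \oplus V \oplus W$ precisely when $\langle y(x_j) \rangle$ is a~direct summand of~$Y$. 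Order preservation with respect to refinement is immediate from the componentwise nature of the construction, so once these conditions are in hand the isomorphism of posets follows.
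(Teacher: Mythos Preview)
Your proposal is correct and follows essentially the same route as the paper: the paper also uses the decomposition $V^{\perp} = V \oplus Y$ to write $u_i = x_i + y_i$, and then invokes Lemma~\ref{LemLambdaUnimodularProperties} parts~(2) and~(3) directly to transfer $\lambda$-unimodularity (rather than redoing the bookkeeping as you plan), together with the observation that $\mu(u_i) = \mu(y_i)$ and $\lambda(u_i, u_j) = \lambda(y_i, y_j)$ since $V$ is isotropic. Your concern about the direct-summand condition is harmless but unnecessary, since $\lambda$-unimodularity of a sequence already forces its span to be a direct summand.
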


For the case of hyperbolic modules this has been done in~\cite[Lemma~7.2]{MvdK}.

\begin{proof}
    We follow the proofs of~\cite[Lemma~3.4]{CharneyThmVogtmann} and \cite[Thm.~3.2]{CharneyThmVogtmann}.

    For (1) note that $\calI \calU(M)_{(v_1, \ldots, v_k)} \subseteq \calO(V^{\perp})$. Let $(u_1, \ldots, u_l) \in \calO(V^{\perp})$. We have $V^{\perp} = V \oplus Y$ by Lemma~\ref{LemLambdaUnimodularProperties}~(1) and therefore $u_i = x_i + y_i$ for some $x_i \in V$ and $y_i \in Y$. By Lemma~\ref{LemLambdaUnimodularProperties}~(3) the sequence $(u_1, \ldots, u_l, v_1, \ldots, v_k)$ is $\lambda$-unimodular if and only if the sequence $(y_1, \ldots, y_l, v_1, \ldots, v_k)$ is $\lambda$-unimodular, which holds if and only if the sequence $(y_1, \ldots, y_l)$ is $\lambda$-unimodular by Lemma~\ref{LemLambdaUnimodularProperties}~(2). Furthermore, we have $\mu(u_i) = \mu(y_i)$ and $\lambda(u_i, u_j) = \lambda(y_i, y_j)$ since $(v_1, \ldots, v_k) \in \calI \calU(M)$. Therefore, $\langle u_1, \ldots, u_l, v_1, \ldots, v_k \rangle$ is isotropic if and only if $\langle u_1, \ldots, u_l \rangle$ is isotropic and we get an~isomorphism
    \begin{eqnarray*}
        \calI \calU(M)_{(v_1, \ldots, v_k)} & \longrightarrow & \calI \calU(Y)\langle V \rangle \\
        (u_1, \ldots, u_l) & \longmapsto & ((y_1, x_1), \ldots, (y_l, x_l)).
    \end{eqnarray*}

    A~similar argument to the above for $\calH \calU(M) \cap \calM \calU(M)_{((v_1, 0),  \ldots, (v_k, 0))} \subseteq \calO(V^{\perp} \times V^{\perp})$ shows~(2). Statement~(3) holds by construction of~$Y$.
\end{proof}

The proof of~\cite[Thm.~7.4]{MvdK} uses the connectivity of the poset of isotropic $\lambda$-unimodular sequences in the hyperbolic module~$H^n$, $\calI \calU(H^n)$, given in~\cite[Thm.~7.3]{MvdK}. The following result is the analogous statement for general quadratic modules.

\begin{thm} \label{Thm7.3MvdK}
    The poset $\calI \calU(M)$ is $\bigl \lfloor \frac{g(M) - \usr(R) - 2}{2} \bigr \rfloor$-connected and for every $x \in \calI \calU(M)$ the poset $\calI \calU(M)_x$ is $\bigl \lfloor \frac{g(M) - \usr(R) - |x| - 2}{2} \bigr \rfloor$-connected.
\end{thm}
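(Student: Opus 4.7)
The plan is to prove both statements by simultaneous induction on the Witt index $g(M)$, following the template of Mirzaii--van der Kallen's proof of~\cite[Thm.~7.3]{MvdK} for the hyperbolic case $M = H^n$, but replacing each ingredient that was specific to hyperbolic modules by its generalisation established earlier in this paper. The base case $g(M) < \usr(R) + 2$ gives a connectivity bound of $-1$ or $-2$ and is vacuous.

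For the link statement, given $x = (x_1, \ldots, x_k) \in \calI\calU(M)$ with $\lambda$-dual witnesses $w_1, \ldots, w_k$, Lemma~\ref{Lem7.2MvdK}(1) supplies an isomorphism $\calI\calU(M)_x \cong \calI\calU(Y)\langle V\rangle$ with $V = \langle x_1, \ldots, x_k\rangle$ and $Y = V^{\perp} \cap W^{\perp}$. Provided $g(M) \geq \usr(R) + k$, the submodule $V \oplus W \cong H^k$ splits off and Corollary~\ref{CorCancellationHGeneral} yields $g(Y) \geq g(M) - k$. The forgetful map $\calI\calU(Y)\langle V\rangle \to \calI\calU(Y)$ is a poset map whose over-fibres have an initial object (decorate everything by any fixed element of $V$), hence it is a weak equivalence by Quillen's Theorem~A. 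The inductive hypothesis applied to $Y$ then yields the required connectivity.

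For the absolute statement, the strategy is to upgrade from the auxiliary complex $\calU(M, \lambda, \mu)$, which Corollary~\ref{CorLem6.8MvdK}(1) shows is $(g(M) - \usr(R) - 1)$-connected, to $\calI\calU(M)$. Given a map $f\colon S^n \to |\calI\calU(M)|$ with $n \leq \lfloor (g(M) - \usr(R) - 2)/2 \rfloor$, first extend over $D^{n+1}$ into $|\calU(M, \lambda, \mu)|$. One then runs a bad-simplex induction à la Hatcher--Vogtmann: a simplex $(v_1, \ldots, v_k)$ of $\calU(M, \lambda, \mu)$ is \emph{bad} if some $\lambda(v_i, v_j) \neq 0$, and one replaces each bad simplex by an isotropic one supported on its star. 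The existence of the replacement uses Lemma~\ref{Lem6.9MvdK} to guarantee that the relevant link of isotropic $\lambda$-unimodular vectors in the orthogonal complement $\langle v_1, \ldots, v_k\rangle^{\perp}$ is sufficiently connected, combined with Lemma~\ref{LemLambdaUnimodularProperties}~(2)--(3) to ensure that concatenating with the remaining vertices preserves $\lambda$-unimodularity.

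The principal technical obstacle is the execution of this bad-simplex replacement in the general quadratic module setting: unlike in~\cite{MvdK}, the orthogonal complements appearing mid-argument are not hyperbolic, so one has to invoke Lemma~\ref{Lem6.6MvdK}, Proposition~\ref{PropCancellationH}, and Corollary~\ref{CorCancellationHGeneral} in place of their hyperbolic counterparts, and keep track that Witt indices do not drop by more than the naive count. The factor of two in the connectivity range reflects the fact that bad data is indexed by pairs $(i, j)$ rather than by single vertices: each violated relation $\lambda(v_i, v_j) \neq 0$ costs one unit of connectivity in the replacement procedure, so the $(g(M) - \usr(R) - 1)$-connectivity of $\calU(M, \lambda, \mu)$ is precisely enough to yield the claimed $\lfloor (g(M) - \usr(R) - 2)/2\rfloor$-connectivity of $\calI\calU(M)$.
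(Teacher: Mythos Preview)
Your proposal is correct and follows essentially the same approach as the paper: the paper's proof is simply ``analogous to the proof of~\cite[Thm.~7.3]{MvdK}'' with Lemma~\ref{Lem6.6MvdK}, Lemma~\ref{Lem6.9MvdK}, and Lemma~\ref{Lem7.2MvdK} substituted for their hyperbolic-module counterparts (and \cite[Lemma~7.1]{MvdK}, the $F\langle S\rangle \simeq F$ statement you recover via Quillen's Theorem~A, noted to hold verbatim). Your outline is a faithful unpacking of exactly that strategy.

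One small point worth tightening: to invoke Lemma~\ref{Lem7.2MvdK}(1) you need $((x_1, w_1), \ldots, (x_k, w_k)) \in \calH\calU(M)$, not just arbitrary $\lambda$-dual witnesses; the standard Gram--Schmidt correction $w_i \mapsto w_i + v_i r_i + \cdots$ (using that the $v_i$ are isotropic) makes the $w_i$ isotropic and mutually orthogonal, and this is what lets you split off a genuine $H^k$. Once that is done, the inclusion $H^{g-k} \subseteq Y$ is visible directly after applying Lemma~\ref{Lem6.6MvdK}, so you do not need to appeal to the cancellation Corollary~\ref{CorCancellationHGeneral} (which would otherwise require knowing $g(Y)$ is large a~priori).
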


\begin{proof}[Outline of the proof.]
    The proof is analogous to the proof of~\cite[Thm.~7.3]{MvdK}, where we use Lemma~\ref{Lem6.6MvdK} instead of~\cite[Lemma~6.6]{MvdK}, Lemma~\ref{Lem6.9MvdK} instead of~\cite[Lemma~6.9]{MvdK}, and Lemma~\ref{Lem7.2MvdK} instead of~\cite[Lemma~7.2]{MvdK}. Note that \cite[Lemma~7.1]{MvdK} can easily be seen to hold in case of general quadratic modules.
\end{proof}

\begin{proof}[Outline of the proof of~Theorem~\ref{Thm7.4MvdK}]
    The proof is analogous to the proof of~\cite[Thm.~7.4]{MvdK}. The only changes that need to be made are the modifications described in the proof of~Theorem~\ref{Thm7.3MvdK} as well as using Theorem~\ref{Thm7.3MvdK} instead of~\cite[Thm.~7.3]{MvdK}.
\end{proof}

%%%%%%%%%%%%%%%%%%%%%%%%%%%%%%%%%%%%%%%%%%%%%%%%%%%%%%%%%%%%%%%%%%%%%%%%%%%%%%%%%%%%%%%%%%%%%%%%%%%%%%%%%%%%%%%%%%%%%%%%%%%%%%%%%%%%%%%%%%%%%%%%%%%%%%%%
\subsection{Homological Stability} \label{SecHomStabU}
%%%%%%%%%%%%%%%%%%%%%%%%%%%%%%%%%%%%%%%%%%%%%%%%%%%%%%%%%%%%%%%%%%%%%%%%%%%%%%%%%%%%%%%%%%%%%%%%%%%%%%%%%%%%%%%%%%%%%%%%%%%%%%%%%%%%%%%%%%%%%%%%%%%%%%%%

We now show homological stability for unitary groups over quadratic modules (Theorem~\ref{ThmMainU}). This induces in particular Theorem~B. As in the previous chapter we use the machinery of Randal-Williams--Wahl~\cite{R-WWahl}. Let $(R, \eps, \Lambda)$-Quad be the groupoid of quadratic modules over~$(R, \eps, \Lambda)$ and their isomorphisms. We write $f(R, \eps, \Lambda)$-Quad for the full subcategory on those quadratic modules which are finitely generated as $R$-modules. Since this is a~braided monoidal category it has an~associated pre-braided category $Uf(R, \eps, \Lambda) \text{-} \Quad$.

By Corollary~\ref{CorCancellationHGeneral} and \cite[Thm.~1.8~(a)]{R-WWahl} the category $Uf(R, \eps, \Lambda) \text{-} \Quad$ is locally homogeneous at~$(M, H)$ for $g(M) \geq \usr(R) + 1$. Axiom LH3 is verified by the following Lemma which for the special case of hyperbolic modules is shown in~\cite[Lemma~5.13]{R-WWahl}.

\begin{lem} \label{LemLH3U}
    Let $M$ be a~quadratic module with $g(M) \geq \usr(R) + 1$. Then the semisimplicial set $W_n(M, H)_{\bullet}$ is $\bigl \lfloor \frac{n + g(M) - \usr(R) - 3}{2}\bigr \rfloor$-connected.
\end{lem}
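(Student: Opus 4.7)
The plan is to follow the strategy used for Lemma~\ref{LemLH3GL}, which itself adapts the proof of \cite[Lemma~5.13]{R-WWahl} that treats the special case of hyperbolic modules. The two inputs that need replacing are the connectivity statement for the complex of hyperbolic unimodular sequences (for which I use Theorem~\ref{Thm7.4MvdK} in place of \cite[Thm.~7.4]{MvdK}) and the cancellation statement needed to identify links (for which I use Corollary~\ref{CorCancellationHGeneral} in place of \cite[Prop.~5.12]{R-WWahl}).

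Concretely, I would define $X(M)_{\bullet}$ to be the semisimplicial set whose $p$-simplices are quadratic module maps $H^{p+1} \to M$, with $i$-th face map given by precomposition with the standard inclusion $H^i \oplus 0 \oplus H^{p-i} \hookrightarrow H^{p+1}$. I would also let $U(M)$ denote the simplicial complex whose vertices are quadratic module homomorphisms $H \to M$ that are split injections (without a choice of splitting), and where a tuple $(v_0, \ldots, v_p)$ spans a $p$-simplex if and only if $v_0 \oplus \cdots \oplus v_p \colon H^{p+1} \to M$ is a split injection of quadratic modules. Under the identification of elements of $\calH\calU(M)$ with quadratic maps $H^k \to M$ noted in Section~\ref{SecCpxU}, the poset of simplices of $X(M)_{\bullet}$ equals $\calH\calU(M)$, and the poset of simplices of $(\Link_{U(M)}(\sigma))^{\mathrm{ord}}_{\bullet}$ equals $\calH\calU(M)_{\sigma}$ for every simplex~$\sigma$.

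Applying Theorem~\ref{Thm7.4MvdK} to $M \oplus H^n$ and its links, and arguing exactly as in the proof of \cite[Lemma~5.9]{R-WWahl} and our Lemma~\ref{LemLH3GL}, shows that $U(M \oplus H^n)$ is weakly Cohen--Macaulay of dimension $\bigl \lfloor \frac{n + g(M) - \usr(R) - 1}{2} \bigr \rfloor$ (in the sense of \cite[Sec.~2.1]{GR-WStabNew}). To finish, I want to verify the hypotheses of \cite[Thm.~3.6]{HW} for the complex $S_n(M, H)$. That $S_n(M, H)$ is a join complex over $U(M \oplus H^n)$ follows by the same reasoning as in \cite{R-WWahl}. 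For the remaining condition, that $\pi(\Link_{S_n(M, H)}(\sigma))$ is weakly Cohen--Macaulay of dimension $\bigl \lfloor \frac{n + g(M) - \usr(R) - 2p - 3}{2} \bigr \rfloor$ for each $p$-simplex~$\sigma$, I would identify the link with a complex of the same form on an orthogonal complement and then apply the already-established connectivity result to the complement. Combining \cite[Thm.~3.6]{HW} with \cite[Thm.~2.10]{R-WWahl} then yields the claim.

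The main obstacle is the last step: to identify $\pi(\Link_{S_n(M, H)}(\sigma))$ with a complex of hyperbolic unimodular sequences on a smaller quadratic module, one needs to know that the orthogonal complement of $\sigma$ is again a quadratic module of the expected form, which requires a cancellation statement valid when $M$ is not assumed hyperbolic. This is precisely what Corollary~\ref{CorCancellationHGeneral} provides, and is the reason we had to upgrade \cite[Prop.~5.12]{R-WWahl} in the previous subsection; once that input is in hand, the machinery of \cite{R-WWahl} applies verbatim.
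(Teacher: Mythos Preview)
Your proposal is correct but takes a substantially more elaborate route than the paper's own proof. The paper simply observes (citing \cite[Lemma~5.13]{R-WWahl}) that the poset of simplices of $W_n(M, H)_\bullet$ is \emph{equal} to the poset $\calH\calU(M \oplus H^n)$, so their geometric realisations are homeomorphic, and then applies Theorem~\ref{Thm7.4MvdK} directly. That is the entire argument.

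The reason this shortcut is available in the unitary setting, but not in the general-linear setting of Lemma~\ref{LemLH3GL}, is that a hyperbolic direct summand of a quadratic module has a \emph{canonical} complement, namely its orthogonal complement. Hence a morphism $H^{p+1} \to M \oplus H^n$ in $Uf(R,\eps,\Lambda)\text{-}\Quad$ carries no more data than the underlying split injection, and $W_n(M, H)_\bullet$ already coincides with your $X(M \oplus H^n)_\bullet$. In the $\GL$ case a free summand $R^{p+1}$ has many complements, which is why one must pass through $S_n$, the join-complex theorem \cite[Thm.~3.6]{HW}, and cancellation to compare $W_n(M, R)_\bullet$ with the poset of unimodular sequences. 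Your machinery (including the appeal to Corollary~\ref{CorCancellationHGeneral}) would still go through here---the join-complex map degenerates to an isomorphism---but it is unnecessary, and the direct identification both shortens the argument and explains why no connectivity is lost in passing from Theorem~\ref{Thm7.4MvdK} to the lemma.
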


\begin{proof}
    As in the proof of~\cite[Lemma~5.13]{R-WWahl}, the poset of simplices of the semisimplicial set $W_n(M, H)_{\bullet}$ is equal to the poset $\calH \calU(M \oplus H^n)$ considered in Section~\ref{SecCpxU}. Hence, they have homeomorphic geometric realisations. The claim now follows from Theorem~\ref{Thm7.4MvdK}.
\end{proof}

Applying Theorems~\cite[Thm.~3.1]{R-WWahl}, \cite[Thm.~3.4]{R-WWahl} and \cite[Thm.~4.20]{R-WWahl} to the quadratic module $(Uf(R, \eps, \Lambda) \text{-} \Quad , \oplus, 0)$ yields the following theorem which directly implies Theorem~B.

\begin{thm} \label{ThmMainU}
    Let $M$ be a~quadratic module satisfying $g(M) \geq \usr(R) + 1$. For a~coefficient system $F \colon Uf(R, \eps, \Lambda) \text{-} \Quad \rightarrow \Z\text{-}\Mod$ of degree~$r$ at~$0$ in the sense of~\cite[Def,.~4.10]{R-WWahl}. Then for $s = g(M) - \usr(R)$ the map
        \[H_k(U(M); F(M)) \rightarrow H_k(U(M \oplus H); F(M \oplus H))\]
    is
    \begin{enumerate}
        \item an~epimorphism for $k \leq \frac{s - 1}{2}$ and an~isomorphism for $k \leq \frac{s - 2}{2}$, if $F$ is constant,
        \item an~epimorphism for $k \leq \frac{s - r - 1}{2}$ and an~isomorphism for $k \leq \frac{s - r - 3}{2}$, if $F$ is split polynomial in the sense of~\cite{R-WWahl},
        \item an~epimorphism for $k \leq \frac{s - 1}{2} - r$ and an~isomorphism for $k \leq \frac{s - 3}{2} - r$.
    \end{enumerate}
    For the commutator subgroup $U(M)^{\prime}$ we get that the map
        \[H_k(U(M)^{\prime}; F(M)) \rightarrow H_k(U(M \oplus H)^{\prime}; F(M \oplus H))\]
    is
    \begin{enumerate} \setcounter{enumi}{3}
        \item an~epimorphism for $k \leq \frac{s - 1}{3}$ and an~isomorphism for $k \leq \frac{s - 3}{3}$, if $F$ is constant,
        \item an~epimorphism for $k \leq \frac{s - 2r - 1}{3}$ and an~isomorphism for $k \leq \frac{s - 2r - 4}{3}$, if $F$ is split polynomial in the sense of~\cite{R-WWahl},
        \item an~epimorphism for $k \leq \frac{s - 1}{3} - r$ and an~isomorphism for $k \leq \frac{s - 4}{3} - r$.
    \end{enumerate}
\end{thm}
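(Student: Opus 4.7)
The plan is to apply the three main homological stability theorems of Randal-Williams--Wahl, namely \cite[Thm.~3.1]{R-WWahl}, \cite[Thm.~3.4]{R-WWahl}, and \cite[Thm.~4.20]{R-WWahl}, to the pre-braided homogeneous category $Uf(R,\eps,\Lambda)\text{-}\Quad$ associated to the symmetric monoidal groupoid $(f(R,\eps,\Lambda)\text{-}\Quad, \oplus, 0)$. All of the algebraic and combinatorial content has been developed in the previous sections of the paper; what remains is to verify the hypotheses of these three theorems at the object $(M, H)$ and then to read off the numerical ranges.

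First, I would verify that the category is locally homogeneous at $(M, H)$ whenever $g(M) \geq \usr(R) + 1$. By \cite[Prop.~1.6]{R-WWahl} and \cite[Thm.~1.8(a)]{R-WWahl}, local homogeneity reduces to the cancellation property ``$M \oplus H \cong N \oplus H$ implies $M \cong N$'' for quadratic modules of sufficient Witt index, which is precisely Corollary~\ref{CorCancellationHGeneral}. Next, I would invoke Lemma~\ref{LemLH3U} to verify axiom LH3: it states that the semisimplicial set $W_n(M, H)_\bullet$ of \cite[Def.~2.1]{R-WWahl} is $\bigl\lfloor (n + g(M) - \usr(R) - 3)/2 \bigr\rfloor$-connected. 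In the terminology of \cite{R-WWahl} this is a slope-$1/2$ connectivity bound with intercept involving $g(M) - \usr(R) = s$; Lemma~\ref{LemLH3U} itself is extracted from Theorem~\ref{Thm7.4MvdK} via the reasoning of \cite[Lemma~5.13]{R-WWahl}.

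Once both hypotheses are in place, ranges (1) and (4) fall out of \cite[Thm.~3.1]{R-WWahl} with slope parameter $2$ applied to the constant coefficient system; ranges (2) and (5) come from \cite[Thm.~4.20]{R-WWahl} applied to split polynomial coefficient systems of degree $r$; and ranges (3) and (6) for arbitrary coefficient systems of degree~$r$ at~$0$ are obtained from \cite[Thm.~3.4]{R-WWahl}. The numerical shifts that distinguish the six ranges (the extra $-r$ versus $-2r$, the drop from denominator $2$ in (1)--(3) to denominator $3$ in (4)--(6) when passing to the commutator subgroup, and the additional decrement of $1$ or $2$ between epimorphism and isomorphism ranges) are exactly the generic shifts built into those three theorems of \cite{R-WWahl} once one substitutes the slope and intercept given by Lemma~\ref{LemLH3U}; extracting them amounts to careful substitution into the statements in \cite{R-WWahl}.

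The main obstacle at this stage is purely notational bookkeeping: one must match the slope-and-intercept convention of \cite{R-WWahl} against the exact form of the connectivity bound $\bigl\lfloor (n + s - 3)/2 \bigr\rfloor$ supplied by Lemma~\ref{LemLH3U}, and then propagate the constants $-1$, $-r$, $-2r$, $-2$, $-3$, $-4$ through the epi/iso columns and through the passage from $U(M)$ to $U(M)'$ without off-by-one errors. All substantive algebraic input---cancellation (Corollary~\ref{CorCancellationHGeneral}) and the high connectivity of $\calH\calU(M)$ (Theorem~\ref{Thm7.4MvdK}), together with the framework of \cite{R-WWahl}---has already been established, so no further geometric or algebraic argument is required.
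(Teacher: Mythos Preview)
Your proposal is correct and matches the paper's own argument essentially line for line: the paper verifies local homogeneity at $(M,H)$ via Corollary~\ref{CorCancellationHGeneral} and \cite[Thm.~1.8(a)]{R-WWahl}, verifies axiom LH3 via Lemma~\ref{LemLH3U} (which rests on Theorem~\ref{Thm7.4MvdK}), and then simply applies \cite[Thm.~3.1, Thm.~3.4, Thm.~4.20]{R-WWahl} to read off the six ranges. There is no additional content beyond the bookkeeping you describe.
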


%%%%%%%%%%%%%%%%%%%%%%%%%%%%%%%%%%%%%%%%%%%%%%%%%%%%%%%%%%%%%%%%%%%%%%%%%%%%%%%%%%%%%%%%%%%%%%%%%%%%%%%%%%%%%%%%%%%%%%%%%%%%%%%%%%%%%%%%%%%%%%%%%%%%%%%%
\section{Homological Stability for Moduli Spaces of High Dimensional Manifolds} \label{ChHomStabModuli}
%%%%%%%%%%%%%%%%%%%%%%%%%%%%%%%%%%%%%%%%%%%%%%%%%%%%%%%%%%%%%%%%%%%%%%%%%%%%%%%%%%%%%%%%%%%%%%%%%%%%%%%%%%%%%%%%%%%%%%%%%%%%%%%%%%%%%%%%%%%%%%%%%%%%%%%%

Let $P$ be a~closed $(2n-1)$-dimensional manifold, and let $W$ and $M$ be compact connected $2n$-dimensional manifolds with identified boundaries $\partial W = P = \partial M$. In this chapter we follow Galatius--Randal-Williams~\cite{GR-WStabNew}. All statements and definitions are contained in the previous version, however, we use the numbering of what we understand will be the final version. We say that $M$ and $W$ are \textit{stably diffeomorphic relative to~$P$} if there is a~diffeomorphism
    \[W \# W_g \cong M \# W_h\]
relative to~$P$, for some $g, h \geq 0$, where $W_g := \#_g (\Sp^n \times \Sp^n)$ for $g \geq 0$. Let $\calM^{\st}(W)$ denote the set of $2n$-dimensional submanifolds $M \subset (-\infty, 0] \times \R^{\infty}$ such that
\begin{enumerate}
  \item $M \cap (\{0\} \times \R^{\infty}) = \{0\} \times P$ and $M$ contains $(-\eps, 0] \times P$ for some $\eps > 0$,
  \item the boundary of~$M$ is precisely $\{0\} \times P$, and
  \item $M$ is stably diffeomorphic to~$W$ relative to~$P$.
\end{enumerate}
We use the topology on~$\calM^{\st}(W)$ as described in~\cite[Ch.~6]{GR-WStabNew}. We write $\calM(W)$ for the model of the classifying space $B\Diff_{\partial}(W)$ defined in~\cite{GR-WStabNew}, which as a~set is the subset of $\calM^{\st}(W)$ given by those submanifolds that are diffeomorphic to~$W$ relative to~$P$. With this notion we have
    \[\calM^{\st}(W) = \bigsqcup_{[T]} \calM(T),\]
where the union is taken over the set of compact manifolds~$T$ with boundary~$\partial T = P$, which are stably diffeomorphic to~$W$ relative to~$P$, one in each diffeomorphism class relative to~$P$. The stabilisation map is the same as considered in~\cite{GR-WStabNew} and is given as follows: We choose a~submanifold $S \subset [-1, 0] \times \R^{\infty}$ with collared boundary $\partial S = \{-1, 0\} \times P = S \cap (\{-1, 0\}\times \R^{\infty})$, such that $S$ is diffeomorphic relative to its boundary to $([-1, 0] \times P) \# W_1$. If $P$ is not path connected, we also choose in which path component to perform the connected sum. Gluing $S$ then induces the self-map
\begin{eqnarray*} %\label{MapStab}
    s = - \cup S \colon \calM^{\st}(W)
    & \longrightarrow
    & \calM^{\st}(W) \\
    M
    & \longmapsto
    & (M - e_1) \cup S, \nonumber
\end{eqnarray*}
that is, translation by one unit in the first coordinate direction followed by union of submani\-folds of $(-\infty, 0] \times \R^{\infty}$.
Note that by construction we have $M \cup_P S \cong M \# W$ relative to~$P$, and hence $M \cup_P S$ is stably diffeomorphic to~$W$ if and only if $M$ is.

As in the previous chapters, we have a~notion of genus: Writing $W_{g, 1} := W_g \setminus \mathrm{int}(\D^{2n})$ the \textit{genus} of a~compact connected $2n$-dimensional manifold~$W$ is
    \[g(W) := \sup \{g \in \N \ | \ \text{there is an~embedding $W_{g, 1} \hookrightarrow W$}\}\]
and the \textit{stable genus} of~$W$ is
    \[\gbar(W) := \sup_{k \geq 0} \,  \{ g(W \# W_k) - k \ | \ k \in \N \}.\]
Note that since the map $k \mapsto g(W \# W_k) - k$ is non-decreasing and bounded above by $\frac{b_n(W)}{2}$, where $b_n(W)$ is the $n$-th Betti number of $W$, the above supremum is finite. The following theorem shows homological stability for the graded spaces (in the sense of~\cite[Def.~6.6]{GR-WStabNew}) $\calM^{\st}(W)_g \subset \calM^{\st}(W)$, which are those manifolds $M \in \calM^{\st}(W)$ satisfying $\gbar(M) = g$. Note that by definition of the stable genus, the map~$s$ defined above restricts to a~map $s \colon \calM^{\st}(W)_g \rightarrow \calM^{\st}(W)_{g+1}$. For the case of simply-connected compact manifolds Galatius--Randal-Williams have shown homological stability for the spaces $\calM(W)_g$ in~\cite[Thm.~6.3]{GR-WStabNew}.

\begin{thm} \label{Thm6.3GRW}
    Let $2n \geq 6$ and $W$ be a~compact connected manifold. Then the map
        \[s_* \colon H_k(\calM^{\st}(W)_g) \longrightarrow H_k(\calM^{\st}(W)_{g+1})\]
    is an~epimorphism for $k \leq \frac{g - \usr(\Z[\pi_1(W)])}{2}$ and an~isomorphism for $k \leq \frac{g - \usr(\Z[\pi_1(W)]) - 2}{2}$.
\end{thm}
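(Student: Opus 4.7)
The plan is to follow the strategy of Galatius--Randal-Williams~\cite{GR-WStabNew} very closely, substituting Theorem~\ref{Thm7.4MvdK} (the connectivity of the complex of hyperbolic $\lambda$-unimodular sequences in a \emph{general} quadratic module) for the corresponding result of Mirzaii--van der Kallen used in their argument. The reason this substitution is needed is precisely that for a non-simply-connected~$W$, the intersection form on $\pi_n(W)$, viewed as an~$(\eps,\Lambda)$-quadratic module over $\Z[\pi_1(W)]$, is in general a~singular, non-hyperbolic quadratic module, so the Mirzaii--van der Kallen result does not apply.

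First, I would recall the semisimplicial resolution $K_\bullet(M)$ of GRW, whose $p$-simplices over~$M \in \calM^{\st}(W)_g$ are $(p+1)$-tuples of disjoint embeddings $(\Sp^n \times \Sp^n) \setminus \mathrm{int}(\D^{2n}) \hookrightarrow M$ (with suitable boundary conditions). By the standard machinery (\cite[Ch.~6]{GR-WStabNew}), the homological stability statement of Theorem~\ref{Thm6.3GRW} is formally implied by the claim that the realisation of~$K_\bullet(M)$ is $\bigl\lfloor \tfrac{\gbar(M) - \usr(\Z[\pi_1(M)]) - 3}{2} \bigr\rfloor$-connected. Thus the problem is reduced to proving this connectivity estimate.

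The core step is then to relate~$K_\bullet(M)$ to the algebraic complex $\calH\calU(M_{\mathrm{alg}})$ of hyperbolic $\lambda$-unimodular sequences in the quadratic $\Z[\pi_1(M)]$-module~$M_{\mathrm{alg}} := \pi_n(M)$ equipped with its intersection form and self-intersection map. Following the Whitney-trick / surgery argument that GRW use in the simply-connected case (which requires $2n \geq 6$), a~hyperbolic $\lambda$-unimodular sequence of simplices in $M_{\mathrm{alg}}$ can be realised by a~system of embedded copies of $(\Sp^n \times \Sp^n) \setminus \mathrm{int}(\D^{2n})$ in~$M$; this identifies the connectivity of~$K_\bullet(M)$ with that of $\calH\calU(M_{\mathrm{alg}})$ up to a~controlled constant. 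Combined with the observation that $g(M_{\mathrm{alg}}) \geq \gbar(M)$ (since any embedding of $W_{g,1}$ into some stabilisation of~$M$ produces a~hyperbolic summand in~$M_{\mathrm{alg}}$ after cancellation, using Corollary~\ref{CorCancellationHGeneral} to pass from stable to unstable splittings), Theorem~\ref{Thm7.4MvdK} then yields the desired connectivity bound with $\usr(\Z[\pi_1(M)])$ appearing in exactly the right place.

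I expect the main obstacle to be the geometric realisation step in the non-simply-connected setting: one must verify that the Whitney trick and the cancellation arguments used by GRW to promote algebraic hyperbolic pairs to disjointly embedded handles go through for coefficients in $\Z[\pi_1(W)]$ rather than~$\Z$, and that the singularity of the quadratic form (which is the novelty in our setting) does not obstruct this realisation; the algebraic inputs needed for this -- transitivity on unimodular elements of fixed $\mu$-value, and cancellation of a hyperbolic summand -- are exactly Corollary~\ref{CorThm8.1MvdKV} and Corollary~\ref{CorCancellationHGeneral}, which is why these improved versions were proved in Section~\ref{SecProofLem6.6}. Once the connectivity of $K_\bullet(M)$ is established, Theorem~\ref{Thm6.3GRW} follows by the same double complex / group-completion argument as in~\cite[Ch.~6]{GR-WStabNew}, and Theorem~C is then the restriction to a~single diffeomorphism class.
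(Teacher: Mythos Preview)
Your proposal is correct and follows essentially the same route as the paper: reduce to the connectivity of the geometric complex $K_\bullet(W)$, map it to the algebraic complex $\calH\calU$ of a suitable quadratic $\Z[\pi_1(W)]$-module, invoke Theorem~\ref{Thm7.4MvdK} (in its stable-genus form, Corollary~\ref{CorUConnectivityGbar}), and lift nullhomotopies back using the Whitney trick over $\Z[\pi_1(W)]$; the remainder of the Galatius--Randal-Williams spectral-sequence argument then goes through unchanged.

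One technical correction: the quadratic module is not $\pi_n(M)$ but the group $\calI^{\tn}_n(W)$ of regular homotopy classes of immersed $n$-spheres with trivial normal bundle (together with a path to the basepoint), following Wall; the self-intersection form $\mu$ is not well-defined on $\pi_n$ itself, and the lifting step requires representatives that are already framed immersions. The paper also observes that retaining the basepoint path attached to each immersion (rather than discarding it and choosing a new one) makes the ``spin flip'' argument of \cite{GR-WStabNew} unnecessary in this setting.
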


This in particular implies that for any manifold~$W$ with boundary~$P$, the restriction
    \[s \colon \calM(W) \longrightarrow \calM(W \cup_P S)\]
induces an~epimorphism on homology in degrees satisfying $k \leq \frac{\gbar(W) -\usr(\Z[\pi_1(W)])}{2}$ and an~isomorphism in degrees satisfying $k \leq \frac{\gbar(W) -\usr(\Z[\pi_1(W)]) - 2}{2}$. Since $g(W) \leq \gbar(W)$ this implies Theorem~C.

Using Example~\ref{ExsUsr}~(\ref{ExsUsrVirtuallyPolycyclic}) we get a~special case of~Theorem~\ref{Thm6.3GRW}.

\begin{cor} \label{CorMainUVirtuallyPolycyclic}
    Let $2n \geq 6$ and $W$ be a~compact connected manifold whose fundamental group is virtually polycyclic of Hirsch length~$h$. Then the map
        \[s_* \colon H_k(\calM^{\st}(W)_g) \longrightarrow H_k(\calM^{\st}(W)_{g+1})\]
    is an~epimorphism for $k \leq \frac{g - h - 3}{2}$ and an~isomorphism for $k \leq \frac{g - h - 5}{2}$.
\end{cor}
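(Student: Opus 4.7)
The plan is to deduce this corollary directly by substituting the explicit upper bound on the unitary stable rank of the group ring of a virtually polycyclic group into the stability range provided by Theorem~\ref{Thm6.3GRW}. No new geometric or homological input is needed beyond what has already been established in the paper; the work is purely a matter of chasing constants.

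First, I would invoke Example~\ref{ExsUsr}~(\ref{ExsUsrVirtuallyPolycyclic}), due to Crowley--Sixt, which asserts that for a virtually polycyclic group $G$ of Hirsch length $h(G)$ one has the inequality $\usr(\Z[G]) \leq h(G) + 3$. Applied to $G = \pi_1(W)$, this gives $\usr(\Z[\pi_1(W)]) \leq h + 3$, and in particular the unitary stable rank is finite, so Theorem~\ref{Thm6.3GRW} applies.

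Next, I would apply Theorem~\ref{Thm6.3GRW} to $W$ and use monotonicity of the stability range in the parameter $\usr(\Z[\pi_1(W)])$: if the map
\[s_* \colon H_k(\calM^{\st}(W)_g) \longrightarrow H_k(\calM^{\st}(W)_{g+1})\]
is an epimorphism for $k \leq \frac{g - \usr(\Z[\pi_1(W)])}{2}$ and an isomorphism for $k \leq \frac{g - \usr(\Z[\pi_1(W)]) - 2}{2}$, then replacing $\usr(\Z[\pi_1(W)])$ by its upper bound $h+3$ can only shrink (or preserve) the ranges of $k$ for which these conclusions hold. Substituting yields surjectivity for $k \leq \frac{g - h - 3}{2}$ and bijectivity for $k \leq \frac{g - h - 5}{2}$, exactly as claimed.

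There is no substantive obstacle here, since the only nontrivial ingredients, namely Theorem~\ref{Thm6.3GRW} itself and the Crowley--Sixt bound on $\usr(\Z[G])$ for virtually polycyclic $G$, are already in place. The sole thing to verify is that the arithmetic of the stability range matches: $\frac{g - (h+3)}{2} = \frac{g-h-3}{2}$ and $\frac{g-(h+3)-2}{2} = \frac{g-h-5}{2}$, which is immediate.
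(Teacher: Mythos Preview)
Your proposal is correct and matches the paper's approach exactly: the paper simply states that the corollary follows from Theorem~\ref{Thm6.3GRW} via Example~\ref{ExsUsr}~(\ref{ExsUsrVirtuallyPolycyclic}), and your substitution $\usr(\Z[\pi_1(W)]) \leq h+3$ into the stability range reproduces the claimed bounds.
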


This theorem applies in particular to all compact connected manifolds with finite fundamental group and more generally with finitely generated abelian fundamental group.

Another consequence of the above theorem is the following cancellation result which in the case of simply-connected manifolds has been done in~\cite[Cor.~6.3]{GR-WStabNew}. The statement is closely related to~\cite[Thm.~1.1]{CrowleySixt}.

\begin{cor} \label{CorCancellationModuliSpace}
    Let $2n \geq 6$ and $P$ be a~$(2n-1)$-dimensional manifold. Let $W$ and $W^{\prime}$ be compact connected manifolds with boundary~$P$ such that $W \# W_g \cong W^{\prime} \# W_g$ relative to~$P$ for some $g \geq 0$. If $\gbar(W) \geq \usr(\Z[\pi_1(W)]) + 2$, then $W \cong W^{\prime}$ relative to~$P$.
\end{cor}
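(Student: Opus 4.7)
The plan is to apply Theorem~\ref{Thm6.3GRW} in degree $k = 0$ to deduce that $W$ and $W^{\prime}$ represent the same path-component of~$\calM^{\st}(W)$, which by the decomposition $\calM^{\st}(W) = \bigsqcup_{[T]} \calM(T)$ means $W \cong W^{\prime}$ relative to~$P$.

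First, I would take care of some bookkeeping. Since $W \# W_g \cong W^{\prime} \# W_g$ relative to~$P$, the manifolds $W$ and $W^{\prime}$ are stably diffeomorphic relative to~$P$, so $W^{\prime} \in \calM^{\st}(W)$; moreover, $\pi_1(W) \cong \pi_1(W^{\prime})$ as $W_g$ is simply connected, so the invariant $\usr(\Z[\pi_1(W)])$ is well-defined on both. From the definition $\gbar(M) = \sup_k \{g(M \# W_k) - k\}$ a short computation gives $\gbar(M \# W_1) = \gbar(M) + 1$, and iterating, $\gbar(W) + g = \gbar(W \# W_g) = \gbar(W^{\prime} \# W_g) = \gbar(W^{\prime}) + g$, whence $\gbar(W) = \gbar(W^{\prime}) =: N$. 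Thus $W$ and $W^{\prime}$ both belong to the graded piece $\calM^{\st}(W)_N$, and the hypothesis now reads $N \geq \usr(\Z[\pi_1(W)]) + 2$.

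Next, note that iterating the stabilisation map $g$ times gives, up to diffeomorphism relative to $P$, the operation $M \mapsto M \# W_g$ (the manifold $S$ is diffeomorphic relative to its boundary to $([-1,0] \times P) \# W_1$). Therefore $s^g(W) \cong W \# W_g \cong W^{\prime} \# W_g \cong s^g(W^{\prime})$ relative to~$P$, so $s^g(W)$ and $s^g(W^{\prime})$ lie in the same path-component of~$\calM^{\st}(W)_{N + g}$.

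Finally, Theorem~\ref{Thm6.3GRW} at $k = 0$ gives that $s_* \colon H_0(\calM^{\st}(W)_{N + j}) \to H_0(\calM^{\st}(W)_{N + j + 1})$ is an isomorphism for every $j \geq 0$, since $N + j \geq \usr(\Z[\pi_1(W)]) + 2$. Composing these yields a bijection on $\pi_0$ from $\calM^{\st}(W)_N$ to $\calM^{\st}(W)_{N + g}$; its injectivity forces $W$ and $W^{\prime}$ to lie in the same path-component of~$\calM^{\st}(W)_N$, as desired. The only genuine subtlety is the stable-genus bookkeeping needed to ensure that $W$, $W^{\prime}$, and all of their iterated stabilisations really sit in graded pieces to which the stability range applies; once this is set up, the argument is a direct application of Theorem~\ref{Thm6.3GRW} in degree zero.
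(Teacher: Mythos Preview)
Your argument is correct and is precisely the approach the paper has in mind: the paper's own proof simply says ``Analogous to the proof of~\cite[Cor.~6.4]{GR-WStabNew}, where we apply Theorem~\ref{Thm6.3GRW} instead of~\cite[Thm.~6.3]{GR-WStabNew}'', and the argument in \cite{GR-WStabNew} is exactly the degree-zero homological stability argument you have written out, including the stable-genus bookkeeping and the passage from an $H_0$-isomorphism to a $\pi_0$-bijection.
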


\begin{proof}
    Analogous to the proof of~\cite[Cor.~6.4]{GR-WStabNew}, where we apply Theorem~\ref{Thm6.3GRW} instead of~\cite[Thm.~6.3]{GR-WStabNew}.
\end{proof}

The proof of Theorem~\ref{Thm6.3GRW} is analogous to that of~\cite[Thm.~6.3]{GR-WStabNew} which treats the case of simply-connected manifolds. The idea is to consider the group of immersions of $(\Sp^n \times \Sp^n) \setminus \mathrm{int}(\D^{2n})$ into a~manifold. Equipping this with a~bilinear form that counts intersections and a~function that counts self-intersections we get a~quadratic module. The precise construction is the content of the following section. The high connectivity shown in the previous chapter then implies a~connectivity statement for a~complex of geometric data associated to the manifold. This is the crucial result in order to show homological stability which we do in Section~\ref{SecProofThm6.3GRW}.

%%%%%%%%%%%%%%%%%%%%%%%%%%%%%%%%%%%%%%%%%%%%%%%%%%%%%%%%%%%%%%%%%%%%%%%%%%%%%%%%%%%%%%%%%%%%%%%%%%%%%%%%%%%%%%%%%%%%%%%%%%%%%%%%%%%%%%%%%%%%%%%%%%%%%%%%
\subsection{Associating a~Quadratic Module to a~Manifold} \label{SecManifoldIntoQuadMod}
%%%%%%%%%%%%%%%%%%%%%%%%%%%%%%%%%%%%%%%%%%%%%%%%%%%%%%%%%%%%%%%%%%%%%%%%%%%%%%%%%%%%%%%%%%%%%%%%%%%%%%%%%%%%%%%%%%%%%%%%%%%%%%%%%%%%%%%%%%%%%%%%%%%%%%%%

In order to relate the objects in this chapter to the algebraic objects considered in Section~\ref{SecCpxU} we want to associate to each compact connected $2n$-dimensional manifold~$W$ a~quadratic module~$(\calI^{\tn}_n(W), \lambda, \mu)$  with form parameter $((-1)^n, \Lambda_{\min})$. This will be a~$\Z[\pi_1(W, *)]$-module given by a~version of the group of immersed $n$-spheres in~$W$ with trivial normal bundle, with pairing given by the intersection form, and quadratic form given by counting self-intersections, both considered over the group ring~$\Z[\pi_1(W, *)]$. For the rest of this chapter we drop the basepoint~$*$ from the notation and just write $\pi_1(W)$.

To make this construction precise we fix a~framing $b_{\Sp^n \times \D^n} \in \mathrm{Fr}(\Sp^n \times \D^n)$  at the basepoint in $\Sp^n \times \D^n$ as defined in~\cite[Ch.~5]{GR-WStabNew}. We can now generalise \cite[Def.~5.2]{GR-WStabNew}, following the construction in the proof of~\cite[Thm.~5.2]{Wall}.

\begin{defn} \label{DefCalIfrLambdaMu}
    Let $2n \geq 6$ and $W$ be a~compact connected $2n$-dimensional manifold, equipped with a~\textit{framed basepoint}, i.e.\ a~point $b_W \in \mathrm{Fr}(W)$, and an~orientation compatible with~$b_W$.
    \begin{enumerate}
        \item We consider the ring $\Z[\pi_1(W)]$ with involution given by $\gbar := w_1(g) g^{-1} \in \Z[\pi_1(W)]$, where $w_1(g)$ is the first Stiefel--Whitney class of~$g$. Recall that the first Stiefel--Whitney class can be viewed as the homomorphism $\pi_1(W) \rightarrow \Z^{\times} = \{-1, 1\}$ which sends a~loop to $1$ if and only if it is orientation preserving.

            Let $\calI^{\fr}_n(W)$ be the set of regular homotopy classes of immersions \mbox{$i \colon \Sp^n \times \D^n \looparrowright W$} equipped with a~path in $\mathrm{Fr}(W)$ from $Di(b_{\Sp^n \times \D^n})$ to $b_W$. We write $\calI_n(W)$ for the set of regular homotopy classes of immersions $\Sp^n \looparrowright W$ equipped with a~path in~$W$ from a~fixed basepoint in~$\Sp^n$ to the basepoint~$*$ in~$W$. We define $\calI^{\tn}_n(W)$ to be the image of the map $\calI^{\fr}_n(W) \rightarrow \calI_n(W)$ which is given by forgetting the framing. Since an~immersion $\Sp^n \looparrowright W$ is framable if and only if it has a~trivial normal bundle, the set $\calI^{\tn}_n(W)$ is given by regular homotopy classes of immersions with a~trivial normal bundle.

            Using Smale-Hirsch immersion theory we can identify $\calI_n(W)$ with the $n$-th homotopy group of $n$-frames in~$W$. This induces an~(abelian) group structure on~$\calI_n(W)$. The $\pi_1(W)$-action is given by concatenating a~loop in~$W$ with the path corresponding to an~element in~$\calI_n(W)$ as described in~\cite[Thm.~5.2]{Wall}. Now, $\calI^{\tn}_n(W)$ is a~$\Z[\pi_1(W)]$-submodule of $\calI_n(W)$.
        \item Let $a, b \in \calI^{\tn}_n(W)$ be two immersed spheres, which we may suppose meet in general position, i.e.\ transversely in a~finite set of points. For a~point~$p$ in~$a$ let $\gamma_a(p)$ denote a~path from the basepoint~$*$ to $p$ in $a$. Since $2n \geq 6$ such a~path is canonical up to homotopy. For $p \in a \cap b$ we define $\gamma_{(a, b)}(p)$ to be the concatenation of $\gamma_a(p)$ followed by the inverse of ${\gamma_b(p)}$.

            Let us fix an~orientation of~$W$ at the basepoint~$*$ and transport the orientation to~$p$ along~$a$. Then $\eps_{(a, b)}(p)$ is defined to be the sign of the intersection of~$a$ and $b$ with respect to this orientation at~$p$. Given these notions we define a~map
            \begin{eqnarray*}
                \lambda \colon \calI^{\tn}_n(W) \times \calI^{\tn}_n(W)
                & \longrightarrow
                & \Z[\pi_1(W)] \\
                (a, b)
                & \longmapsto
                & \sum_{p \in a \cap b} \eps_{(a, b)}(p) \gamma_{(a, b)}(p).
            \end{eqnarray*}
        \item Let $a \in \calI^{\tn}_n(W)$ be an~immersed sphere in general position and let $p \in \Sp^n \times \{0\}$ be a~point in~$a$. We write $\gamma(p)$ for the path from the basepoint~$*$ to $p$ in the universal cover of the image of~$a$ in~$W$.

            At a~self-intersection point of~$a$ two branches of $a$ cross. By choosing an~order of these branches we can define $\eps(p, q)$ as above. Recall that $\Lambda_{\min} = \{\gamma - \eps \overline{\gamma} \ | \ \gamma \in \Z[\pi_1(W)]\}$. We define a~map
            \begin{eqnarray*}
                \mu \colon \calI^{fr}_n(W)
                & \longrightarrow
                & \Z[\pi_1(W)] / \Lambda_{\min} \\
                a
                & \longmapsto
                & \sum_{\substack{\{p, q\} \subset \Sp^n \\ i_a(p) = i_a(q) \\ p \neq q}} \eps(p, q) \gamma(p, q),
            \end{eqnarray*}
            where $i_a$ is an~immersion of $\Sp^n$ corresponding to~$a$ and $\gamma(p, q)$ is the loop in~$a$ based at the basepoint~$*$ given by the concatenation of $a(\gamma(p))$ and the inverse of $a(\gamma(q))$, see Figure~\ref{FigDefMu}. The definition of $\Lambda_{\min}$ guarantees that the order of the points $p, q$ is not relevant, i.e.\ we have $\eps(p, q) \gamma(p, q) \equiv \eps(q, p) \gamma (q, p) \mod \Lambda_{\min}$.
            \begin{figure}
                \centering
                \begin{minipage}{.5\textwidth}
                    \centering
                    \includegraphics[viewport=0 -1 192 151]{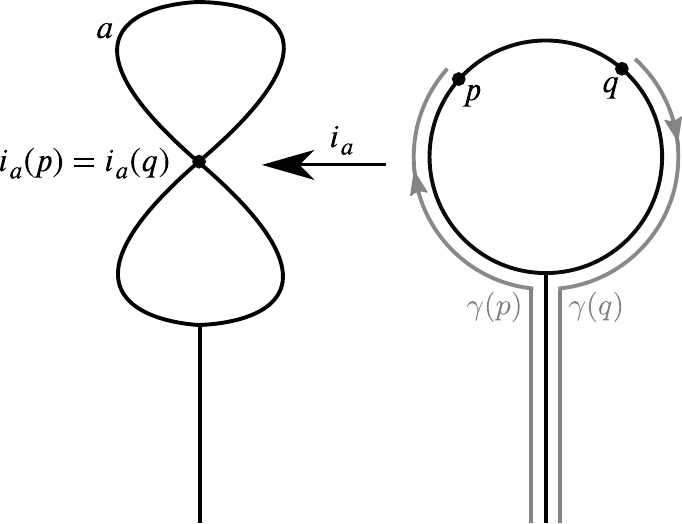}
                    \captionof{figure}{Definition of $\gamma(p, q)$.}
                    \label{FigDefMu}
                \end{minipage}%
                \begin{minipage}{.5\textwidth}
                    \centering
                    \includegraphics[viewport=0 -1 59 153]{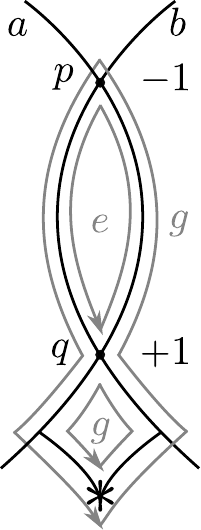}
                    \captionof{figure}{Using the Whitney trick.}
                    \label{FigWhitneyTrickLambda}
                \end{minipage}
            \end{figure}
    \end{enumerate}
\end{defn}

\begin{rmks}\label{RmkLambdaMu}
    \leavevmode
    \begin{enumerate}
      \item The (abelian) group structure on~$\calI^{\tn}_n(W)$ is given by forming the connected sum along the path as described in~\cite[Ch.~5]{Wall}.
      \item The proof of~\cite[Thm.~5.2~(i)]{Wall} shows that both maps~$\lambda$ and $\mu$ are well-defined.
      \item \label{RmkLambdaMuLambda} We show that we can always change $a$ by an~isotopy so that every point in~$a \cap b$ yields a~summand in $\lambda(a, b)$, i.e.\ so that no two intersection points give summands that cancel. The idea is to pair up intersection points that give the same element in~$\Z[\pi_1(W)]$ but with opposite signs, and to use the Whitney trick to kill these intersection points. Figure~\ref{FigWhitneyTrickLambda} shows a~sector of~$a$ and $b$ in~$W$ with two intersection points~$p$ and $q$.
          Both paths~$\gamma_{(a, b)}(p)$ and $\gamma_{(a, b)}(q)$ correspond to $g$ in $\pi_1(W)$ and the points~$p$ and $q$ have opposite signs. If this is the case the loop~$e$ is contractible. Hence, we can fill in a~$2$-disc and use the Whitney trick in order to move $a$ away from $b$ in the sector shown in the picture.

    \end{enumerate}
\end{rmks}

The subsequent lemma generalises \cite[Lemma~5.3]{GR-WStabNew}. The proof is analogous to the proof of~\cite[Lemma~5.3]{GR-WStabNew}, again using \cite[Thm.~5.2]{Wall}.

\begin{lem} \label{LemCalIfrQuadMod}
    The triple $(\calI^{\tn}_n(W), \lambda, \mu)$ is a~$((-1)^n, \Lambda_{\min})$-quadratic module.
\end{lem}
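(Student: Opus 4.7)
The plan is to verify in turn each axiom of a~$((-1)^n, \Lambda_{\min})$-quadratic module for the triple $(\calI^{\tn}_n(W), \lambda, \mu)$; none of the steps is deep and the non-trivial bookkeeping --- signs arising from orientation-reversing loops, conjugation of paths by loops, and transposition of $n$-planes in a~$2n$-dimensional tangent space --- is carried out in detail in~\cite[Thm.~5.2]{Wall}, which I would cite rather than reproduce.

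First I would dispose of the ambient data. The map $g \mapsto w_1(g) g^{-1}$ is an~anti-involution on $\Z[\pi_1(W)]$ because $w_1 \colon \pi_1(W) \to \{\pm 1\}$ is a~homomorphism; the unit $\eps = (-1)^n$ is central and satisfies $\overline{\eps} = \eps^{-1}$; and $\Lambda_{\min}$ is by construction the smallest admissible form parameter, for which the stability condition $\overline{r}\, \Lambda_{\min}\, r \subseteq \Lambda_{\min}$ is automatic.

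Second, for $\lambda$: $\Z$-bilinearity is immediate from counting signed intersection points. The identity $\lambda(a, b) = \eps \overline{\lambda(b, a)}$ is checked pointwise at intersections: swapping $a$ and $b$ reverses the path $\gamma_{(a, b)}(p)$, which on the group-ring summand produces the involution, and transposes two $n$-planes in the $2n$-dimensional tangent space, contributing the sign $(-1)^n = \eps$. Sesquilinearity over $\Z[\pi_1(W)]$ comes from tracking the path concatenations that arise when an~immersion is translated by an~element $g \in \pi_1(W)$, with the Stiefel--Whitney factor $w_1(g)$ in the involution absorbing any orientation reversal.

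Third, for $\mu$: well-definedness modulo~$\Lambda_{\min}$ is already observed in Definition~\ref{DefCalIfrLambdaMu}. The equivariance $\mu(x \cdot g) = \overline{g}\, \mu(x)\, g$ follows because translating $x$ by $g$ conjugates each double-point loop $\gamma(p, q)$ by $g$, with the orientation sign delivering the involution on the left factor. For the quadratic-refinement identity $\mu(x + y) - \mu(x) - \mu(y) \equiv \lambda(x, y) \pmod{\Lambda_{\min}}$, I would represent $x + y$ as a~connected sum and partition its double points into self-intersections of~$x$, self-intersections of~$y$, and mutual crossings of~$x$ with~$y$; the last set furnishes an~ordered count of intersection points, which collapses to $\lambda(x, y)$ modulo $\Lambda_{\min}$ (the quotient identifying the two orderings of a~double point). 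The one step where genuine care is required is verifying that $\mu$ descends from $\calI^{\fr}_n(W)$ to $\calI^{\tn}_n(W)$, i.e.\ that a~change of framing of a~trivially normally-framed immersion alters the self-intersection sum only by an~element of~$\Lambda_{\min}$; this is the one place I would rely directly on the framing computation inside the proof of~\cite[Thm.~5.2]{Wall} rather than rework it.
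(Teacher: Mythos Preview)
Your proposal is correct and follows essentially the same approach as the paper: both arguments rest on the computations in~\cite[Thm.~5.2]{Wall}, with the paper simply declaring the proof ``analogous to the proof of~\cite[Lemma~5.3]{GR-WStabNew}, again using \cite[Thm.~5.2]{Wall}'' while you spell out which axiom each part of Wall's argument establishes. Your added remark that $\mu$ must descend from $\calI^{\fr}_n(W)$ to $\calI^{\tn}_n(W)$ is a point the paper leaves entirely implicit in its citation.
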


%%%%%%%%%%%%%%%%%%%%%%%%%%%%%%%%%%%%%%%%%%%%%%%%%%%%%%%%%%%%%%%%%%%%%%%%%%%%%%%%%%%%%%%%%%%%%%%%%%%%%%%%%%%%%%%%%%%%%%%%%%%%%%%%%%%%%%%%%%%%%%%%%%%%%%%%
\subsection{Proof of Theorem~\ref{Thm6.3GRW}} \label{SecProofThm6.3GRW}
%%%%%%%%%%%%%%%%%%%%%%%%%%%%%%%%%%%%%%%%%%%%%%%%%%%%%%%%%%%%%%%%%%%%%%%%%%%%%%%%%%%%%%%%%%%%%%%%%%%%%%%%%%%%%%%%%%%%%%%%%%%%%%%%%%%%%%%%%%%%%%%%%%%%%%%%

We denote by~$H$ the manifold we obtain from $W_{1, 1}$ by gluing $[-1, 0] \times \D^{2n-1}$ onto $\partial W_{1, 1}$ along an~oriented embedding
    \[\{-1\} \times \D^{2n-1} \longrightarrow \partial W_{1, 1}.\]
We choose this embedding once and for all. After smoothing corners, $H$ is diffeomorphic to~$W_{1, 1}$ but contains a~standard embedding of $[-1, 0] \times \D^{2n-1}$. By an~embedding of~$H$ into a~manifold~$W$ we mean an~embedding that maps $\{0\} \times \D^{2n-1}$ into $\partial W$ and the rest of~$H$ into the interior of~$W$.
We define the embeddings~$\overline e$ and $\overline f$ of~$\Sp^n$ into~$H$ as the inclusion $\Sp^n \hookrightarrow \Sp^n \times \D^n$ given by $x \mapsto (x, 0)$ followed by the maps~\cite[(5.2)]{GR-WStabNew} and \cite[(5.3)]{GR-WStabNew} respectively (see Figure~\ref{FigDefHef}).
\begin{figure}
    \includegraphics[viewport=0 -1 181 78]{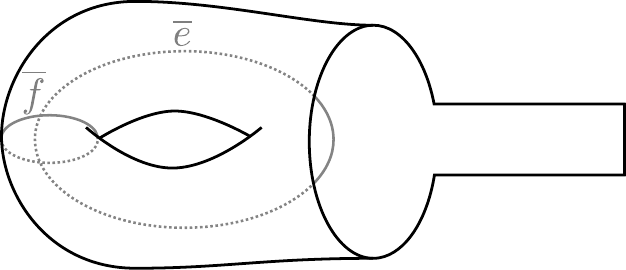}
    \captionof{figure}{Definition of $\overline e$ and $\overline f$ in $H$.}
    \label{FigDefHef}
\end{figure}
The embedding~$\overline e$ together with a~path in~$H$ from the basepoint of~$\overline e(\Sp^n)$ to the basepoint~$(0, 0)$ in~$[-1, 0] \times \D^{2n-1} \subseteq H$ defines an~element $e \in \calI^{\tn}_n(H)$. Since $H$ is simply-connected the choice of path is unique up to isotopy. Analogously, we get an~element~$f \in \calI^{\tn}_n(H)$. Hence, an~embedding~$\phi$ of~$H$ into~$W$ yields a~hyperbolic pair $\phi_*(e)$, $\phi_*(f)$ in $\calI^{\tn}_n(W)$. As described in~\cite[Ch.~5]{GR-WStabNew} this can be extended to a~map
\begin{equation*}
    K^{\delta}(W) \longrightarrow \calH \calU(\calI^{\tn}_n(W)),
\end{equation*}
where $K^{\delta}(W)$ denotes the simplicial complex as defined in~\cite[Def.~5.1]{GR-WStabNew} and $\calH \calU(\calI^{\tn}_n(W))$ is the simplicial complex defined in Section~\ref{SecCpxU}. We use this map to deduce the connectivity of $|K^{\delta}_{\bullet}(W)| = |K^{\delta}(W)|$ from the connectivity of $\calH \calU(\calI^{\tn}_n(W))$ which we have shown in Section~\ref{SecCpxU}. This is the content of the next theorem. For the case of simply-connected manifolds this has been done in~\cite[Lemma~5.5]{GR-WStabNew}, \cite[Thm.~5.6]{GR-WStabNew}, and \cite[Cor.~5.10]{GR-WStabNew}.

\begin{thm} \label{ThmAllKConnectivity}
    Let $2n \geq 6$ and $W$ be a~compact connected $2n$-dimensional manifold. Then the following spaces, defined in~\cite[Ch.~5]{GR-WStabNew}, are all $\bigl \lfloor \frac{\gbar(W) - \usr(\Z[\pi_1(W)]) - 3}{2} \bigr \rfloor$-connected:
    \begin{enumerate}
      \item $|K^{\delta}_{\bullet}(W)|$,
      \item $|K_{\bullet}(W)|$,
      \item $|\Kbar_{\bullet}(W)|$.
    \end{enumerate}
\end{thm}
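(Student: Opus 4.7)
The plan is to adapt the strategy of Galatius--Randal-Williams \cite[Lemma 5.5, Thm.~5.6, Cor.~5.10]{GR-WStabNew}, whose simply-connected proof was built to feed into an algebraic connectivity result; we simply replace that algebraic input with our Theorem~\ref{Thm7.4MvdK} applied to the quadratic module $(\calI^{\tn}_n(W), \lambda, \mu)$ constructed in Section~\ref{SecManifoldIntoQuadMod}. The first task is therefore to show the \emph{genus comparison} $g(\calI^{\tn}_n(W)) \geq \gbar(W)$. Given $g$ disjoint embeddings of $W_{1,1}$ into $W \# W_k$ (for appropriate $k$), the pairs $(\phi_\ast e, \phi_\ast f) \in \calI^{\tn}_n(W \# W_k)$ form a hyperbolic sequence, and a stabilisation argument transfers this to $W$; since the Witt index is preserved by the ring $\Z[\pi_1(W)]$ action being built in from the start, this gives the required bound. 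Combining this with Theorem~\ref{Thm7.4MvdK} shows that $\calH\calU(\calI^{\tn}_n(W))$ is $\lfloor \tfrac{\gbar(W) - \usr(\Z[\pi_1(W)]) - 3}{2}\rfloor$-connected, which will be the output we want to pull back along the comparison map.

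For part~(1) the plan is to show that the map $|K^{\delta}_{\bullet}(W)| \rightarrow |\calH\calU(\calI^{\tn}_n(W))|$ is surjective on homotopy groups in the required range; since the target is highly connected, this yields the connectivity of the source. Concretely, given a simplicial sphere in $\calH\calU(\calI^{\tn}_n(W))$, represented by a collection of hyperbolic pairs of immersed classes $((a_i, b_i))$, the key step is to realise these classes by pairwise-disjoint embeddings of $H$. Vanishing of $\mu(a_i) = \mu(b_i) = 0$ means the algebraic self-intersection is zero in $\Z[\pi_1(W)]/\Lambda_{\min}$, so in the range $2n \geq 6$ the Whitney trick allows us to cancel self-intersections of each immersed sphere (using that $\pi_1(W)$-contributions from equal homotopy classes can be paired up with opposite signs, cf.\ Remark~\ref{RmkLambdaMu}~(\ref{RmkLambdaMuLambda})); vanishing of $\lambda(a_i, b_j) = \delta_{ij}$ (resp.\ $\lambda(a_i, a_j) = \lambda(b_i, b_j) = 0$) similarly lets us cancel excess pairwise intersections, and an easy general-position argument upgrades the pair of embedded spheres $a_i, b_i$ meeting once transversely to an embedding of $H$. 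A standard inductive bookkeeping, simplex by simplex, then shows that any PL-map of a sphere of dimension $\leq \lfloor \tfrac{\gbar(W) - \usr(\Z[\pi_1(W)]) - 3}{2}\rfloor$ into $|\calH\calU(\calI^{\tn}_n(W))|$ lifts up to homotopy to $|K^{\delta}_\bullet(W)|$.

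Parts~(2) and~(3) are deduced from (1) by comparing the three models. The forgetful maps $|\Kbar_\bullet(W)| \rightarrow |K_\bullet(W)| \rightarrow |K^{\delta}_\bullet(W)|$ have homotopy fibres built from spaces of paths in $\mathrm{Fr}(W)$ and spaces of embeddings of $H$ into $W$; both are highly connected (even contractible for a fixed isotopy class of embedding, since embeddings form an open subset of a space of maps whose homotopy type is controlled by Smale--Hirsch theory in dimensions $2n \geq 6$). A spectral sequence or a direct cell-by-cell argument then transports the connectivity from $|K^{\delta}_\bullet(W)|$ upwards, yielding the same connectivity bound for the remaining two spaces.

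The main obstacle I expect is the Whitney-trick step in part~(1): in the non-simply-connected setting, producing an embedded Whitney disc requires careful verification that the pairing/self-pairing data one reads off on $\pi_1$ agree precisely with the vanishing encoded by $\lambda$-unimodularity and $\mu = 0$, and that the two branches at each cancelled intersection point can be oriented consistently. Once this geometric input is in place, the rest of the argument is largely formal combinatorics and mirrors \cite[Sec.~5]{GR-WStabNew}; the genus comparison $g(\calI^{\tn}_n(W)) \geq \gbar(W)$ is the other place where a stabilisation argument is essential, and any slack there would directly weaken the stability range.
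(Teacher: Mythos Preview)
Your overall plan---feed Theorem~\ref{Thm7.4MvdK} for the quadratic module $(\calI^{\tn}_n(W),\lambda,\mu)$ into the Galatius--Randal-Williams argument and use the Whitney trick over $\Z[\pi_1(W)]$ to lift---is the right one, and matches the paper. But the logical framing of part~(1) is backwards. You propose to show that $|K^{\delta}_\bullet(W)| \to |\calH\calU(\calI^{\tn}_n(W))|$ is \emph{surjective} on homotopy groups and conclude connectivity of the source from connectivity of the target; that deduction is invalid (surjectivity onto a trivial group says nothing about the domain). What is actually needed is the relative statement: given a sphere $f\colon \partial I^{k+1}\to |K^{\delta}_\bullet(W)|$, push it down to $|\calH\calU|$, fill it there by Corollary~\ref{CorUConnectivityGbar}, and then lift the \emph{filling disc} back to $|K^{\delta}_\bullet(W)|$ relative to the boundary. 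This lifting step uses that $\calH\calU(\calI^{\tn}_n(W))$ is locally weakly Cohen--Macaulay in the relevant range (again from Corollary~\ref{CorUConnectivityGbar}), so that one can retriangulate the disc and lift vertex by vertex. Your Whitney-trick discussion is exactly the right mechanism for lifting a single vertex, but it must be applied to interior vertices of the disc, not to a free-standing sphere in the target. The paper also records one further point you do not mention: rather than forgetting both paths and re-choosing one at the end (as in the simply-connected proof, which then requires a ``spin flip'' correction), one keeps the path attached to $h(e)$ throughout, which both works in the non-simply-connected case and avoids that correction.

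Your approach to parts~(2) and~(3) diverges from the paper and looks problematic. The natural comparison maps run $|K^{\delta}_\bullet(W)|\to |K_\bullet(W)|$ (discrete to topological), not the other way, and there is no evident fibration with contractible fibres in either direction; the space of embeddings of $H$ in a fixed isotopy class is not in general contractible, and Smale--Hirsch controls immersions rather than embeddings. The paper instead follows \cite[Thm.~5.6]{GR-WStabNew} and \cite[Cor.~5.10]{GR-WStabNew} essentially verbatim: part~(2) is a separate argument of the same shape as part~(1) (sphere in $|K_\bullet|$, fill in $|\calH\calU|$ via Corollary~\ref{CorUConnectivityGbar}, lift the disc), and part~(3) is then deduced from~(2) exactly as in \cite{GR-WStabNew}. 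Finally, for the genus comparison the paper works with the \emph{stable} Witt index $\gbar(\calI^{\tn}_n(W))$ (for which $\gbar(W)\le \gbar(\calI^{\tn}_n(W))$ is immediate) and invokes Lemma~\ref{LemGbarG} / Corollary~\ref{CorUConnectivityGbar}; this sidesteps the ``stabilisation argument transfers this to $W$'' step you allude to but do not spell out.
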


For the proof of this theorem we want a~modified version of~Theorem~\ref{Thm7.4MvdK} using the following definition: The \textit{stable Witt index} of a~quadratic module~$M$ is
    \[\gbar(M) := \sup_{k \geq 0} \, \{g(M \oplus H^k) - k\}.\]
By definition we have $g(M) \leq \gbar(M)$ and if the stable Witt index is big enough we in fact have equality, as the following corollary shows.
\begin{lem} \label{LemGbarG}
    If $\gbar(M) \geq \usr(R)$ then we have $g(M) \geq \gbar(M)$.
\end{lem}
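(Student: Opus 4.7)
The plan is to cancel hyperbolic summands one at a time from an isomorphism that witnesses the stable Witt index. Set $g := \gbar(M)$, which by hypothesis satisfies $g \geq \usr(R)$. By definition of $\gbar(M)$, for any integer $g'$ with $\usr(R) \leq g' \leq g$ there exist $k \geq 0$ and a quadratic module $P$ such that $M \oplus H^k \cong P \oplus H^{g'+k}$: if $g$ is finite the sup is attained (since $g(M \oplus H^k) - k$ takes integer values) so we may take $g' = g$, while if $g = \infty$ we work with arbitrary finite $g' \geq \usr(R)$. It suffices to prove $g(M) \geq g'$ for each such $g'$, since taking the supremum then yields $g(M) \geq g = \gbar(M)$.

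Fix such $g'$, $k$, and $P$. I would prove by descending induction on $j \in \{k, k-1, \ldots, 0\}$ the statement
\[S(j):\quad M \oplus H^j \cong P \oplus H^{g'+j},\]
with base case $S(k)$ being the given isomorphism. For the inductive step from $S(j+1)$ to $S(j)$, rewrite $S(j+1)$ as
\[(M \oplus H^j) \oplus H \;\cong\; (P \oplus H^{g'+j}) \oplus H\]
and apply Corollary~\ref{CorCancellationHGeneral} to cancel the additional copy of $H$. That corollary requires one of the two factors to have Witt index at least $\usr(R)$; by symmetry of $\cong$ it suffices to verify this on the right-hand side, and indeed $g(P \oplus H^{g'+j}) \geq g'+j \geq g' \geq \usr(R)$ since $H^{g'+j}$ is a direct summand of $P \oplus H^{g'+j}$. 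Hence $S(j)$ holds. Taking $j = 0$ yields $M \cong P \oplus H^{g'}$, and so $g(M) \geq g'$, as required.

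The main obstacle is verifying the hypothesis of Corollary~\ref{CorCancellationHGeneral} at every stage of the induction, since a priori we have no direct lower bound on $g(M)$ itself. This is exactly what the assumption $\gbar(M) \geq \usr(R)$ buys: throughout the cancellation process the right-hand side always carries $H^{g'}$, with $g' \geq \usr(R)$, as a direct summand, so Corollary~\ref{CorCancellationHGeneral} continues to apply all the way down to $j = 0$, even though the Witt index of $M$ on the left-hand side is only upgraded at the very end of the argument.
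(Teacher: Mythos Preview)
Your proof is correct and follows essentially the same route as the paper: iteratively cancel one hyperbolic summand at a time from an isomorphism $M \oplus H^k \cong P \oplus H^{g'} \oplus H^k$ using Corollary~\ref{CorCancellationHGeneral}, with the Witt-index hypothesis verified on the side that visibly contains $H^{g'}$ (since $g' \geq \usr(R)$). You are in fact slightly more careful than the paper in treating the possibility that $\gbar(M)$ is not attained by any single $k$, which the paper's proof tacitly assumes away.
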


\begin{proof}
    For $g = \gbar(M)$ we know that $M \oplus H^k \cong P \oplus H^g \oplus H^k$ for some~$k$. If $k=0$ we immediately get $g(M) \geq g$. If $k > 0$ we get $M \oplus H^{k-1} \cong P \oplus H^g \oplus H^{k-1}$ by Corollary~\ref{CorCancellationHGeneral}. Applying this argument inductively then yields $g(M) \geq g$.
\end{proof}

Using the above correspondence between the Witt index and the stable Witt index we can now state Theorem~\ref{Thm7.4MvdK} in terms of the stable Witt index.

\begin{cor} \label{CorUConnectivityGbar}
    The poset $\calH \calU(M)$ is $\bigl \lfloor \frac{\gbar(M) - \usr(R) - 3}{2} \bigr \rfloor$-connected and $\calH \calU(M)_x$ is $\bigl \lfloor \frac{\gbar(M) - \usr(R) - |x| - 3}{2} \bigr \rfloor$-connected for every $x \in \calH \calU(M)$.
\end{cor}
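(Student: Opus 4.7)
The plan is to derive this as an immediate consequence of Theorem~\ref{Thm7.4MvdK} combined with Lemma~\ref{LemGbarG}. The key observation is that whenever the claimed connectivity bound is not vacuous by our convention, the hypothesis of Lemma~\ref{LemGbarG} is automatically satisfied, and hence $g(M)$ agrees with $\gbar(M)$.

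Concretely, I would split into two cases. If $\gbar(M) \leq \usr(R)$, then
\[
\left\lfloor \frac{\gbar(M) - \usr(R) - 3}{2} \right\rfloor \leq \left\lfloor \frac{-3}{2} \right\rfloor = -2,
\]
and since $|x| \geq 1$ for every $x \in \calH\calU(M)$, the analogous bound $\lfloor \frac{\gbar(M) - \usr(R) - |x| - 3}{2} \rfloor$ is also $\leq -2$. By the convention that being $n$-connected for $n \leq -2$ imposes no condition, both assertions hold trivially in this case.

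In the remaining case $\gbar(M) \geq \usr(R) + 1$, we have $\gbar(M) \geq \usr(R)$, so Lemma~\ref{LemGbarG} gives $g(M) \geq \gbar(M)$. Combined with the obvious inequality $g(M) \leq \gbar(M)$ this forces $g(M) = \gbar(M)$, and substituting into Theorem~\ref{Thm7.4MvdK} yields exactly the claimed connectivity bounds for $\calH\calU(M)$ and for each $\calH\calU(M)_x$.

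There is no genuine obstacle here: the corollary is a bookkeeping step converting our Witt-index statement into a stable-Witt-index statement, which is the form needed for the geometric applications in Section~\ref{SecProofThm6.3GRW}, where the stable genus of a manifold is naturally matched with the stable Witt index of its associated quadratic module.
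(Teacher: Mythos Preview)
Your proposal is correct and matches the paper's intended argument: the paper does not even write out a proof, but simply introduces the corollary as Theorem~\ref{Thm7.4MvdK} restated via the identification $g(M)=\gbar(M)$ coming from Lemma~\ref{LemGbarG}, which is exactly what you do (with the vacuous case handled by the $n\leq -2$ convention). One tiny efficiency: you only need $g(M)\geq \gbar(M)$ from Lemma~\ref{LemGbarG}, since the connectivity bound in Theorem~\ref{Thm7.4MvdK} is monotone in $g(M)$; the reverse inequality is not actually required.
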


\begin{rmk}
    Analogous to the above we can define the \textit{stable rank} of an~$R$-module~$M$ as
        \[\rkbar(M) := \sup_{k \geq 0} \, \{\rk(M \oplus R^k) - k \}.\]
    As in the case of the stable Witt index this coincides with the rank of~$M$ if $\rkbar(M) \geq \sr(R)$. This can be shown similarly to the proof of Lemma~\ref{LemGbarG} by inductively applying Theorem~\ref{ThmGLConnectivity} and Proposition~\ref{PropGLCancellation}. Using this we get a~version of Theorem~\ref{ThmGLConnectivity} in terms of the stable rank.
\end{rmk}

\begin{proof}[Proof of~Theorem~\ref{ThmAllKConnectivity}.]
    For $|K^{\delta}_{\bullet}(W)|$ the proof is analogous to the proof of~\cite[Lemma~5.5]{GR-WStabNew} and hence we just comment on the changes we have to make in order to show the above statement. Note that the complex $K^a(\calI^{\tn}_n(W), \lambda, \mu)$ as defined in~\cite[Def.~3.1]{GR-WStabNew} is the same as $\calH \calU(\calI^{\tn}_n(W))$.

    For $\gbar = \gbar (\calI^{fr}_n(W), \lambda, \mu)$ we have $\gbar(W) \leq \gbar$ and hence it is sufficient to show that $|K^{\delta}_{\bullet}(W)|$ is $\bigl \lfloor \frac{\gbar - \usr(\Z[\pi_1(W)]) - 3}{2}\bigr \rfloor$-connected.

    For $k \leq \frac{\gbar - \usr(\Z[\pi_1(W)]) - 3}{2}$ we consider a~map $f \colon \partial I^{k+1} \rightarrow |K^{\delta}_{\bullet}(W)|$, which, as in~\cite{GR-WStabNew}, we may assume is simplicial with respect to some piecewise linear triangulation $\partial I^{k+1} \cong |L|$. By Corollary~\ref{CorUConnectivityGbar} and composing with the map constructed above we get a~nullhomotopy $\overline{f} \colon I^{k+1} \rightarrow |\calH \calU(\calI^{\tn}_n(W))|$. We have to show that this lifts to a~nullhomotopy $F \colon I^{k+1} \rightarrow |K^{\delta}_{\bullet}(W)|$ of~$f$.

    By Corollary~\ref{CorUConnectivityGbar} the complex~$\calH \calU(\calI^{\tn}_n(W))$ is locally weakly Cohen-Macaulay (as defined in~\cite[Sec.~2.1]{GR-WStabNew}) of dimension $\Big \lfloor \frac{\gbar - \usr(\Z[\pi_1(W)])}{2} \Big \rfloor \geq k+1$. Hence, there is a~triangulation $I^{k+1} \cong |K|$ extending~$L$ which satisfies the same properties as in~\cite{GR-WStabNew}.

    We choose an~enumeration of the vertices in~$K$ as $v_1, \ldots, v_N$ such that the vertices in $L$ come before the vertices in $K \setminus L$. We inductively pick lifts of each $\overline f(v_i) \in \calH \calU(\calI^{\tn}_n(W))$ to a~vertex $F(v_i) \in K^{\delta}_{\bullet}(W)$ given by an~embedding $j_i \colon H \rightarrow W$ satisfying the properties~(i) and (ii) in the proof of~\cite[Lemma~5.5]{GR-WStabNew} which control how the images of every two such embeddings intersect. By construction, the vertices in~$L$ already satisfy the required properties~(i) and (ii), so we can assume that $\overline f(v_1), \ldots, \overline f(v_{i-1})$ have already been lifted to maps $j_1, \ldots, j_{i-1}$, satisfying properties~(i) and (ii). Then $v_i \in K \setminus L$ yields a~morphism of quadratic modules $\overline f(v_i) = h \colon H_{hyp} \rightarrow \calI^{\tn}_n(W)$, where $H_{hyp}$ is the hyperbolic module defined in Chapter~\ref{ChHomStabU}, which we want to lift to an~embedding~$j_i$ satisfying properties~(i) and (ii). The element~$h(e)$ is represented by an~immersion $x: \Sp^n \looparrowright W$ with trivial normal bundle satisfying $\mu(x) = 0$ and a~path in~$W$ from the basepoint of~$\Sp^n$ to the basepoint~$*$ of~$W$. By the Whitney trick (which works in our case, but we have to use it over the group ring $\Z[\pi_1(W)]$ as described in Remark~\ref{RmkLambdaMu}~(\ref{RmkLambdaMuLambda})) we can replace $x$ by an~embedding $j(e) \colon \Sp^n \hookrightarrow W$. Similarly, $h(f)$ yields an~embedding $j(f) \colon \Sp^n \hookrightarrow W$, along with another path in~$W$.

    Using the Whitney trick again, we can arrange for the embeddings $j(e)$ and $j(f)$ to intersect transversally in exactly one point. Hence, by picking a~trivialisation of their normal bundles, this induces an~embedding $W_{1, 1} \hookrightarrow W$. To extend this map to an~embedding $H \hookrightarrow W$ of manifolds, note that both $h(e)$ and $h(f)$ come with a~path to the basepoint. The proof in~\cite{GR-WStabNew} forgets both path and chooses a~new one later on (which works since $W$ is simply-connected and hence oriented). Instead, we can keep track of the path coming from $h(e)$. This can be viewed as an~embedding $[-1, 0] \times \{0\} \hookrightarrow W$. This then has a~thickening by definition which gives an~embedding $H \hookrightarrow W$. Analogous to the proof of~\cite[Lemma~5.5]{GR-WStabNew} we can show that the properties~(i) and (ii) hold, and hence conclude the connectivity range.

    The proof for the case $|K_{\bullet}(W)|$ is an~easy extension of the proof of~\cite[Thm.~5.6]{GR-WStabNew}, where we use Corollary~\ref{CorUConnectivityGbar} instead of~\cite[Thm.~3.2]{GR-WStabNew} and hence get a~slightly weaker connectivity range.

    The remaining case follows exactly as in~\cite[Cor.~5.10]{GR-WStabNew}.
\end{proof}

In the above proof, we have lifted the chosen nullhomotopy $\overline f \colon I^{k+1} \rightarrow |\calH \calU(\calI^{\tn}_n(W))|$ and do not have to use the ``spin flip'' argument as in~\cite{GR-WStabNew}. Applying the above approach of keeping track of the path of~$h(e)$ instead of forgetting both paths and choosing some path in the end would also make the ``spin flip'' argument in the proof of~\cite[Lemma~5.5]{GR-WStabNew} unnecessary.

\begin{proof} [Outline of the proof of Theorem~\ref{Thm6.3GRW}.]
    The proof of Theorem~\ref{Thm6.3GRW} is analogous to the proof of~\cite[Thm.~6.3]{GR-WStabNew}. The assumption of~$W$ being simply-connected is only used in~\cite[Lemma~6.8]{GR-WStabNew} so we just need to show that the map given in~\cite[Lemma~6.8]{GR-WStabNew} is $\bigl \lfloor \frac{g - \usr(\Z[\pi_1(W)]) - 1}{2}\bigr \rfloor$-connected for a~compact connected manifold~$W$ of dimension $2n \geq 6$ that is not necessarily simply-connected. But this follows from the proof of~\cite[Lemma~6.8]{GR-WStabNew} by using Theorem~\ref{ThmAllKConnectivity}~$(3)$ instead of~\cite[Cor.~5.10]{GR-WStabNew}.
\end{proof}

\begin{rmk}
    We can combine the above results with the results from Kupers in~\cite{KupersHomeo} for homeomorphisms, PL-homeomorphisms and homeomorphisms as a~discrete group of high-dimensional manifolds. Note that the machinery in Kupers' paper does not rely on the manifolds being simply-connected but rather the input does (i.e.\ the connectivity of a~certain complex uses that the manifold is simply-connected). Therefore, by using our more general theorem (Theorem~\ref{Thm7.4MvdK}) as the input, we can replace the assumption of the manifold being simply-connected by the group ring of the fundamental group having finite unitary stable rank.
\end{rmk}

%%%%%%%%%%%%%%%%%%%%%%%%%%%%%%%%%%%%%%%%%%%%%%%%%%%%%%%%%%%%%%%%%%%%%%%%%%%%%%%%%%%%%%%%%%%%%%%%%%%%%%%%%%%%%%%%%%%%%%%%%%%%%%%%%%%%%%%%%%%%%%%%%%%%%%%%
\subsection{Tangential Structures and Abelian Coefficient Systems} \label{SecTangentalStructures}
%%%%%%%%%%%%%%%%%%%%%%%%%%%%%%%%%%%%%%%%%%%%%%%%%%%%%%%%%%%%%%%%%%%%%%%%%%%%%%%%%%%%%%%%%%%%%%%%%%%%%%%%%%%%%%%%%%%%%%%%%%%%%%%%%%%%%%%%%%%%%%%%%%%%%%%%

In the remaining part of this chapter we extend Theorem~\ref{Thm6.3GRW} in two different ways. One is by considering moduli spaces of manifolds with some additional structure and the other is by taking homology with coefficients in certain local coefficient systems. We follow the approach of~\cite[Ch.~7]{GR-WStabNew}.

A~\textit{tangential structure} is a~map $\theta \colon B \rightarrow BO(2n)$, where $B$ is a~path-connected space. Let $\gamma_{2n} \rightarrow BO(2n)$ denote the universal vector bundle. A~$\theta$-structure on a~$2n$-dimensional manifold $W$ is a~bundle map (fibrewise linear isomorphism) $\hat \ell_W \colon TW \rightarrow \theta^*\gamma_{2n}$, with underlying map $\ell_W \colon W \rightarrow B$. We define $\calM^{\st, \theta}(W, \hat \ell_P)$ to be the set of manifolds $M \in \calM^{\st}(W)$ with $\partial M = P$ equipped with a~$\theta$-structure extending $\hat \ell_P$ for a~fixed pair~$(P, \hat \ell_P)$. As in the previous section we can also define the subset $\calM^{\theta}(W, \hat \ell_P) \subseteq \calM^{\st, \theta}(W \hat \ell_P)$ given by pairs~$(M, \hat \ell_M) \in \calM^{\st, \theta}(W, \hat \ell_P)$ with $M \in \calM(W)$. Using the topology described in~\cite[Ch.~7]{GR-WStabNew} and the correspondence
    \[\calM^{\st, \theta}(W, \hat \ell_P) = \bigsqcup_{[T]} \calM^{\theta}(T, \hat \ell_P),\]
where the union is taken over the set of compact manifolds~$T$ with $\partial T = P$, which are stably diffeomorphic to~$W$, one for each diffeomorphism class relative to~$P$, turns both sets into spaces.

We say that a~$\theta$-structure on~$\Sp^n \times \D^n$ is \textit{standard} if it is standard in the sense of~\cite[Def.~7.2]{GR-WStabNew}. The embeddings~$\overline e$ and $\overline f$ defined in Section~\ref{SecManifoldIntoQuadMod} yield embeddings
    \[e_1, f_1, \ldots, e_g, f_g \colon \Sp^n \longrightarrow W_{g, 1}.\]
We say that a~$\theta$-structure $\hat \ell \colon TW_{g, 1} \rightarrow \theta^* \gamma_{2n}$ on~$W_{g, 1}$ is \textit{standard} if there is a~trivialisation of the normal bundle of $\Sp^n$ (i.e.\ a~framing on~$\Sp^n$) such that the structures $\overline e_i^*\hat \ell$ and $\overline f_i^*\hat \ell$ on $\Sp^n \times \D^n$ are standard.

Let $\hat \ell_S$ be a~$\theta$-structure on the cobordism~$S \cong ([-1, 0] \times P) \# W_1$ which is standard when pulled back along the canonical embedding $\phi^{\prime} \colon W_{1, 1} \rightarrow S$. Writing $\hat \ell_P$ for its restriction to $\{0\} \times P \subset S$, and $\hat \ell^{\prime}_P$ for its restriction to $\{-1\} \times P$, we obtain the following map
\begin{eqnarray} \label{MapStabTheta}
    s = - \cup (S, \hat \ell_S) \colon \calM^{\st, \theta}(W, \hat \ell^{\prime}_P)
    & \longrightarrow
    & \calM^{\st, \theta}(W, \hat \ell_P) \\
    (M, \hat \ell_M)
    & \longmapsto
    & ((M - e_1) \cup S, \hat \ell_M \cup \hat \ell_S). \nonumber
\end{eqnarray}

As in~\cite{GR-WStabNew} we define the \textit{$\theta$-genus} for compact connected manifolds with $\theta$-structure as
    \[g^{\theta}(M, \hat \ell_M) = \max \left \{ g \in \N \ \bigg| \ \parbox{7cm}{there are $g$ disjoint copies of $W_{1, 1}$ in $M$, \ \linebreak each with standard $\theta$-structure} \right \}\]
and the \textit{stable $\theta$-genus} as
    \[\gbar^{\theta}(M, \hat \ell_M) = \max \{g^{\theta}\left((M, \hat \ell_M) \natural_k (W_{1, 1}, \hat \ell_{W_{1, 1}})\right) - k \ | \ k \in \N \},\]
where the boundary connected sum is formed with $k$ copies of $W_{1, 1}$ each equipped with a~standard $\theta$-structure $\hat \ell_{W_{1, 1}}$. As in the previous section, we can use the function $\gbar^{\theta}$ to consider $\calM^{\st, \theta}(W, \hat \ell_P)$ as a~graded space (in the sense of~\cite[Def.~6.6]{GR-WStabNew}) $\calM^{\st, \theta}(W, \hat \ell_P)_g$. With this notation, the stabilisation map~$s$ defined above then restricts to a~map
    \[s \colon \calM^{\st, \theta}(W, \hat \ell^{\prime}_P)_g \rightarrow \calM^{\st, \theta}(W, \hat \ell_P)_{g+1}.\]

We will now introduce a~class of local coefficient systems. Since the spaces considered here are usually disconnected and do not have a~preferred basepoint, local coefficients can be considered as a~functor from the fundamental groupoid to the category of abelian groups. Note that this is closely related to the corresponding definitions in~\cite{R-WWahl}. Then an~\textit{abelian coefficient system} is a~local coefficient system which has trivial monodromy along all nullhomologous loops.

\begin{thm} \label{Thm7.5GRW}
    Let $2n \geq 6$, $W$ be a~compact connected manifold, and $\calL$ be a~local coefficient system on~$\calM^{\st, \theta}(W, \hat \ell_P)$. Considering twisted homology with coefficients in~$\calL$ we get a~map
        \[s_* \colon H_k(\calM^{\st, \theta}(W, \hat \ell^{\prime}_P)_g; s^* \calL) \longrightarrow H_k(\calM^{\st, \theta}(W, \hat \ell_P)_{g+1}; \calL).\]
    \begin{enumerate}
        \item If $\calL$ is abelian then $s_*$ is an~epimorphism for $k \leq \frac{g - \usr(\Z[\pi_1(W)])}{3}$ and an~isomorphism for $k \leq \frac{g - \usr(\Z[\pi_1(W)]) - 3}{3}$.
        \item If $\theta$ is spherical in the sense of~\cite[Def.~7.4]{GR-WStabNew} and $\calL$ is constant, then $s_*$ is an~epimorphism for $k \leq \frac{g - \usr(\Z[\pi_1(W)])}{2}$ and an~isomorphism for $k \leq \frac{g - \usr(\Z[\pi_1(W)]) - 2}{2}$.
    \end{enumerate}
\end{thm}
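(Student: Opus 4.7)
The plan is to follow the proof of~\cite[Thm.~7.5]{GR-WStabNew} line by line, substituting our Theorem~\ref{ThmAllKConnectivity}~(3) for the simply-connected statement~\cite[Cor.~5.10]{GR-WStabNew} at every occurrence. All other ingredients of that proof are either independent of $\pi_1(W)$ or have been extended to the non-simply-connected setting earlier in this chapter.

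First I would promote the complex $\Kbar_\bullet(W)$ to a $\theta$-decorated version $\Kbar^\theta_\bullet(W, \hat{\ell}_W)$ whose $p$-simplices are $p$-simplices of $\Kbar_\bullet(W)$ together with a standard $\theta$-structure on each of the embedded copies of $W_{1,1}$, and transfer the connectivity bound along the forgetful map, exactly as in~\cite[Ch.~7]{GR-WStabNew}. If $\theta$ is spherical, the fibres of the forgetful map are themselves highly connected, the bound from Theorem~\ref{ThmAllKConnectivity}~(3) is preserved, and part~(2) follows. For general $\theta$ there is a controlled loss in the connectivity range, accounting for the weaker bound in part~(1). I would then resolve $\calM^{\st, \theta}(W, \hat{\ell}_P)_g$ by the semi-simplicial space whose $p$-simplices are pairs of a manifold in the moduli space together with a $p$-simplex of $\Kbar^\theta_\bullet$ on it; the augmentation has the $\theta$-decorated complexes as homotopy fibres, and a parametrised surgery argument identifies its strata with moduli spaces of lower stable genus carrying an additional embedded $W_{1,1}$, enabling the induction on~$g$ through the associated spectral sequence.

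The main obstacle will be part~(1) for abelian coefficient systems~$\calL$: one has to check that the monodromy of~$\calL$ along the loops introduced by the stabilisation~(\ref{MapStabTheta}) vanishes, so that on every stratum the pullback of~$\calL$ matches the pullback of~$s^*\calL$ from the stabilised moduli space. This is guaranteed by abelianness since the new loops are nullhomologous, but it forces one extra step in the spectral sequence comparison, which is responsible for the denominator-$3$ range; for constant coefficients in part~(2) no such adjustment is needed, giving the denominator~$2$.
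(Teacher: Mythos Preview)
Your overall strategy---follow the proof of \cite[Thm.~7.5]{GR-WStabNew} and replace the simply-connected connectivity input by its analogue here---matches the paper's approach. But the specific substitution you propose is not the right one, and the workaround you sketch has a gap.

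You want to plug in Theorem~\ref{ThmAllKConnectivity}~(3), which concerns the \emph{undecorated} complex $|\Kbar_\bullet(W)|$, and then transfer the bound to a $\theta$-decorated version via the forgetful map. The paper does not do this, and for good reason: the connectivity needed in the $\theta$-setting must be expressed in terms of the $\theta$-genus $\gbar^\theta(W,\hat\ell_W)$, which can be strictly smaller than $\gbar(W)$, so a bound on $|\Kbar_\bullet(W)|$ in terms of $\gbar(W)$ does not directly give what you need. Instead the paper (and \cite{GR-WStabNew}) proves a separate statement, Proposition~\ref{Prop7.15GRW}, giving the connectivity of $|\Kbar_\bullet(W,\hat\ell_W)|$ in terms of $\gbar^\theta(W,\hat\ell_W)$. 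This is done by repeating the lifting argument of Theorem~\ref{ThmAllKConnectivity}~(1) using the \emph{$\theta$-decorated quadratic module} $(\calI^{\tn}_n(W,\hat\ell_W),\lambda,\mu)$, for which one has $\gbar(\calI^{\tn}_n(W,\hat\ell_W)) \geq \gbar^\theta(W,\hat\ell_W)$, and then appealing to Corollary~\ref{CorUConnectivityGbar}. The correct substitution in the proof of \cite[Thm.~7.5]{GR-WStabNew} is therefore Proposition~\ref{Prop7.15GRW} in place of \cite[Prop.~7.15]{GR-WStabNew} (inside the analogue of \cite[Lemma~6.8]{GR-WStabNew}), together with Theorem~\ref{Thm6.3GRW} in place of \cite[Thm.~6.3]{GR-WStabNew}.

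A smaller point: your account of why the ranges differ between parts~(1) and~(2) conflates two separate mechanisms. The sphericality of $\theta$ is not what produces the denominator~$3$; the distinction between constant and abelian coefficient systems is already present in \cite{GR-WStabNew} and comes from the spectral-sequence argument for twisted coefficients, independent of any ``loss in the forgetful map''.
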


For the case of simply-connected compact manifolds Galatius--Randal-Williams have shown in~\cite[Thm.~7.5]{GR-WStabNew} that the above stabilisation map~$s_*$ is an~isomorphism in a~range.

Given a~pair $(W, \hat \ell_W) \in \calM^{\theta}(W, \hat \ell^{\prime}_P)$, we write $\calM^{\theta}(W, \hat \ell_W) \subset \calM^{\theta}(W, \hat \ell^{\prime}_P)$ for the path component containing $(W, \hat \ell_W)$. By Theorem~\ref{Thm7.5GRW} the map
    \[s \colon \calM^{\theta}(W, \hat \ell_W) \longrightarrow \calM^{\theta} (W \cup_P S, \hat \ell_W \cup \hat \ell_S)\]
is an~isomorphism on homology with (abelian) coefficients in a~range of degrees depending on~$\gbar^{\theta}(W, \hat \ell_W)$.

The proof of Theorem~\ref{Thm7.5GRW} is analogous to the proof of Theorem~\ref{Thm6.3GRW}. We define a~quadratic module for a~pair $(W, \hat \ell_W) \in \calM^{\theta}(W, \hat \ell^{\prime}_P)$ as follows: Let $\calI^{\tn}_n(W, \hat \ell_W) \subseteq \calI^{\tn}_n(W)$ be the subgroup of those regular homotopy classes of immersions $i \colon \Sp^n \looparrowright W$ (together with a~path in~$W$) that have a~trivialisation of the normal bundle of~$\Sp^n$ such that the $\theta$-structure $i^* \hat \ell_W$ on $\Sp^n \times \D^n$ is standard. The bilinear form~$\lambda$ and the quadratic function~$\mu$ on~$\calI^{\tn}_n(W)$ restrict to the subgroup $\calI^{\tn}_n(W, \hat \ell_W)$ and hence define a~quadratic module $(\calI^{\tn}_n(W, \hat \ell_W), \lambda, \mu)$. As in the previous section this gives a~map of simplicial complexes
    \[K^{\delta}(W, \hat \ell_W) \longrightarrow \calH \calU(\calI^{\tn}_n(W, \hat \ell_W)),\]
where the complex $K^{\delta}(W, \hat \ell_W)$ is defined in~\cite[Def.~7.14]{GR-WStabNew}.

The following proposition is the analogue of Theorem~\ref{ThmAllKConnectivity}~$(3)$. For the case of simply-connected manifolds this has been shown in~\cite[Prop.~7.15]{GR-WStabNew}.

\begin{prop} \label{Prop7.15GRW}
    Let $2n \geq 6$, $W$ be a~compact connected $2n$-dimensional manifold, and $\hat \ell_W$ be a~$\theta$-structure on~$W$. Then the space $|\Kbar_{\bullet}(W, \hat \ell_W)|$ (defined in~\cite[Def.~7.14]{GR-WStabNew}) is $\bigl \lfloor \frac{\gbar(W, \hat \ell_W) - \usr(\Z[\pi_1(W)]) - 3}{2}\bigr \rfloor$-connected.
\end{prop}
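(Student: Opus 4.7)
The plan is to mirror the proof of Theorem~\ref{ThmAllKConnectivity}~(3), inserting tangential structures at every step, and to isolate the extra arguments needed to keep a standard $\theta$-structure under control throughout the constructions. As in the proof of Theorem~\ref{ThmAllKConnectivity} I would proceed via $|K^{\delta}_{\bullet}(W, \hat \ell_W)|$, then $|K_{\bullet}(W, \hat \ell_W)|$, and finally $|\Kbar_{\bullet}(W, \hat \ell_W)|$. The key algebraic input is the quadratic module $(\calI^{\tn}_n(W, \hat \ell_W), \lambda, \mu)$ introduced just before the statement. First I would verify that $\gbar(\calI^{\tn}_n(W, \hat \ell_W), \lambda, \mu) \geq \gbar(W, \hat \ell_W)$: any disjoint collection of copies of $W_{1,1}$ in $W$ with standard $\theta$-structure yields, after choosing base paths, a hyperbolic pair in $\calI^{\tn}_n(W, \hat \ell_W)$, and stabilisation by $H$ on the algebraic side corresponds to boundary connected sum with $(W_{1,1}, \hat \ell_{W_{1,1}})$ on the geometric side. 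Combining this with Corollary~\ref{CorUConnectivityGbar} gives that $\calH\calU(\calI^{\tn}_n(W, \hat \ell_W))$ is $\bigl\lfloor \frac{\gbar(W, \hat \ell_W) - \usr(\Z[\pi_1(W)]) - 3}{2}\bigr\rfloor$-connected and locally weakly Cohen--Macaulay of the appropriate dimension.

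Next, for a simplicial map $f \colon \partial I^{k+1} \to |K^{\delta}_{\bullet}(W, \hat \ell_W)|$ with $k \leq \frac{\gbar(W, \hat \ell_W) - \usr(\Z[\pi_1(W)]) - 3}{2}$, I would compose with the map $K^{\delta}(W, \hat \ell_W) \to \calH\calU(\calI^{\tn}_n(W, \hat \ell_W))$ to obtain a nullhomotopy $\overline{f} \colon I^{k+1} \to |\calH\calU(\calI^{\tn}_n(W, \hat \ell_W))|$, and then attempt to lift this through the geometric realisation map. Following the enumeration strategy from the proof of~\cite[Lemma~5.5]{GR-WStabNew} (as adapted in the proof of Theorem~\ref{ThmAllKConnectivity}), I would inductively produce embeddings $j_i \colon H \hookrightarrow W$ representing each vertex in a triangulation of~$I^{k+1}$ extending the one on $\partial I^{k+1}$, satisfying the usual transversality conditions (i) and (ii). For a new vertex~$v_i$, its image $\overline f(v_i)$ is a morphism $H_{hyp} \to \calI^{\tn}_n(W, \hat \ell_W)$, representable by immersions $\Sp^n \looparrowright W$ with trivial normal bundle whose induced $\theta$-structures on $\Sp^n \times \D^n$ are standard, which by the Whitney trick (done over $\Z[\pi_1(W)]$ as in Remark~\ref{RmkLambdaMu}~(\ref{RmkLambdaMuLambda}), and here the inductive transversality condition lets us use it against previously constructed embeddings) can be replaced by transversally intersecting embeddings $j(e), j(f) \colon \Sp^n \hookrightarrow W$. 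Thickening along the path coming from $h(e)$ yields an embedding $H \hookrightarrow W$, and the standardness of the $\theta$-structure on $\Sp^n \times \D^n$ combined with the contractibility of the space of standard $\theta$-structures (cf.~\cite[Def.~7.2]{GR-WStabNew}) ensures the resulting embedding is a vertex of $K^{\delta}(W, \hat \ell_W)$, not merely of $K^{\delta}(W)$.

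Having established the connectivity of $|K^{\delta}_{\bullet}(W, \hat \ell_W)|$, I would deduce the corresponding statement for $|K_{\bullet}(W, \hat \ell_W)|$ by the same argument as in the proof of Theorem~\ref{ThmAllKConnectivity}~(2), which is an adaptation of~\cite[Thm.~5.6]{GR-WStabNew} using Corollary~\ref{CorUConnectivityGbar} in place of~\cite[Thm.~3.2]{GR-WStabNew}. Finally, the passage from $|K_{\bullet}(W, \hat \ell_W)|$ to $|\Kbar_{\bullet}(W, \hat \ell_W)|$ is formal and proceeds exactly as in~\cite[Cor.~5.10]{GR-WStabNew}, giving the claimed connectivity bound.

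The main technical obstacle is the inductive lifting step: one must guarantee that the embedding $j_i \colon H \hookrightarrow W$ produced at stage~$i$ is compatible with the standard $\theta$-structure on $H$ (up to homotopy) \emph{and} meets the earlier $j_1, \dots, j_{i-1}$ in the prescribed pattern. The sphericity of $\theta$ plays no role here (unlike in Theorem~\ref{Thm7.5GRW}), because the definition of $\calI^{\tn}_n(W, \hat \ell_W)$ already builds in the existence of a trivialisation making the $\theta$-structure standard; the work is to check that the Whitney moves used to turn immersions into embeddings, and the subsequent trivialisation of the normal bundle of $j(e) \cup j(f)$, can be chosen so as not to destroy this standardness. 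This is handled by working relative to the already-standard $\theta$-structure on the already-constructed pieces and using that the space of standard $\theta$-structures on $\Sp^n \times \D^n$ is path-connected, which removes the need for any ``spin flip'' argument as remarked after the proof of Theorem~\ref{ThmAllKConnectivity}.
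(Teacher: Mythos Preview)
Your proposal is correct and follows essentially the same route as the paper: establish $\gbar(\calI^{\tn}_n(W,\hat\ell_W))\geq\gbar(W,\hat\ell_W)$, invoke Corollary~\ref{CorUConnectivityGbar} for the Cohen--Macaulay property of $\calH\calU(\calI^{\tn}_n(W,\hat\ell_W))$, lift nullhomotopies through $K^{\delta}(W,\hat\ell_W)\to\calH\calU(\calI^{\tn}_n(W,\hat\ell_W))$ vertex by vertex via the Whitney trick as in Theorem~\ref{ThmAllKConnectivity}, and then pass to $|K_\bullet|$ and $|\Kbar_\bullet|$ exactly as before. The only cosmetic difference is that the paper first produces the lift into $|K^{\delta}(W)|$ and then appeals to \cite[Prop.~7.15]{GR-WStabNew} to upgrade it to $|K^{\delta}(W,\hat\ell_W)|$, whereas you track the $\theta$-structure throughout; the substance is the same.
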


\begin{proof} [Outline of the proof.]
    We have already seen in the previous section that an~embedding $i \colon W_{g, 1} \hookrightarrow W$ yields elements $e_1, f_1, \ldots, e_g, f_g \in \calI^{\tn}_n(W)$. If there is a~trivialisation of the normal bundle such that the $\theta$-structure $i^*\hat \ell_W$ is standard these elements are also contained in the subgroup $\calI^{\tn}_n(W, \hat \ell_W)$. In particular, we get $\gbar (\calI^{\tn}_n(W, \hat \ell_W)) \geq \gbar(W, \hat \ell_W)$ and the complex $\calH\calU(\calI^{\tn}_n(W, \hat \ell_W))$ is locally weakly Cohen-Macaulay (as defined in~\cite[Sec.~2.1]{GR-WStabNew}) of dimension $\bigl \lfloor \frac{\gbar(W, \hat \ell_W) - \usr(\Z[\pi_1(W)])}{2}\bigr \rfloor$ by Corollary~\ref{CorUConnectivityGbar}.

    We first show that the complex $|K^{\delta}(W, \hat \ell_W)|$ is $\bigl \lfloor \frac{\gbar(W, \hat \ell_W) - \usr(\Z[\pi_1(W)]) - 3}{2}\bigr \rfloor$-connected by arguing as in the proof of Theorem~\ref{ThmAllKConnectivity}~$(1)$. There we described how to get a~lift $F \colon I^{k+1} \rightarrow |K^{\delta}(M)|$ of the map
        \[\overline f \colon I^{k+1} \rightarrow |\calH\calU(I^{\tn}_n(W, \hat \ell_W))| \rightarrow |\calH\calU(\calI^{\tn}_n(W))|.\]
    As shown in the proof of~\cite[Prop.~7.15]{GR-WStabNew} we can turn this into a~lift $I^{k+1} \rightarrow |K^{\delta}(W, \hat \ell_W)|$.

    The connectivity of $|\Kbar_{\bullet}(W, \hat \ell_W)|$ now follows as in Theorem~\ref{ThmAllKConnectivity}.
\end{proof}

\begin{proof} [Outline of the proof of Theorem~\ref{Thm7.5GRW}.]
    This proof is is based on the proof of~\cite[Thm.~7.5]{GR-WStabNew} and we therefore just describe the changes that we have to make to that proof. Note that the simply-connected assumption is only used in~\cite[analogue of Lemma~6.8]{GR-WStabNew} so we only have to show that the map considered in that statement is $\bigl \lfloor \frac{\gbar^{\theta}(W, \hat \ell_W) - \usr(\Z[\pi_1(W)]) - 1}{2}\bigr \rfloor$-connected for a~compact and connected manifold $W$ of dimension~$2n\geq6$. But this follows analogously to the proof of~\cite[analogue of Lemma~6.8]{GR-WStabNew} by applying Proposition~\ref{Prop7.15GRW} instead of~\cite[Prop.~7.15]{GR-WStabNew} as in the original proof. This also explains the slightly lower bound in our case. Throughout this proof we need to replace \cite[Thm.~6.3]{GR-WStabNew} in the proof of~\cite[Thm.~7.5]{GR-WStabNew} by Theorem~\ref{Thm6.3GRW}.
\end{proof}

\bibliographystyle{plain}
\bibliography{C:/Users/Nina/Documents/MainBib}

\end{document}